\documentclass[a4paper,12pt]{amsart}
\usepackage{amssymb,amstext,amsmath,amscd,amsthm,amsfonts,enumerate}

\usepackage{multirow}  
\usepackage{booktabs}  
\usepackage{array}     
\usepackage{xcolor}
\usepackage{graphicx}
\usepackage{colortbl}
\usepackage{enumitem}
\usepackage{booktabs}
\usepackage{tikz}
\usetikzlibrary{arrows.meta}
\usepackage{subcaption}

\usepackage[margin=2.5cm]{geometry}
\theoremstyle{plain}
\newtheorem{thm}{Theorem}[section]
\newtheorem{theorem}[thm]{Theorem}

\newtheorem{proposition}[thm]{Proposition}
\newtheorem{lemma}[thm]{Lemma}

\newtheorem{corollary}[thm]{Corollary}

\newtheorem*{claim*}{Claim}
\newtheorem*{maintheorem*}{Main Theorem}
\newtheorem*{corollary*}{Corollary}

\theoremstyle{definition}

\newtheorem{example}[thm]{Example}

\newtheorem{remark}[thm]{Remark}

\newtheorem{ques}[thm]{Question}
\newtheorem{observation}[thm]{Observation}

\numberwithin{equation}{thm}

\newcommand{\calR}{\mathcal{R}}
\newcommand{\calX}{\mathcal{X}}

\newcommand{\fkm}{\mathfrak{m}}

\def\height{\mathrm{ht}}
\def\Spec{\operatorname{Spec}}

\def\gr{\mbox{\rm gr}}

\def\red{\operatorname{red}}

\def\V{\operatorname{V}}

\newcommand{\ceil}[1]{\lceil #1 \rceil}
\newcommand{\ord}{\operatorname{ord}}

\title[]{Normality of monomial ideals in three variables}
\author{Maki Ataka}
\address{Department of Mathematics, School of Science and Technology, Meiji University, 1-1-1 Higashi-mita, Tama-ku, Kawasaki 214-8571, Japan}
\email{atk.maki142@gmail.com}
\author{Naoyuki Matsuoka}
\address{Department of Mathematics, School of Science and Technology, Meiji University, 1-1-1 Higashi-mita, Tama-ku, Kawasaki 214-8571, Japan}
\email{naomatsu@meiji.ac.jp}

\subjclass[2020]{Primary 13B22; Secondary 13A18}
\thanks{{\em Key words and phrases.} Normal ideals, Integral closures, Monomial ideals, Polynomial rings, Valuations.}
\thanks{The second author was partially supported by JSPS Grant-in-Aid for Scientific Research (C) 25K06941.}

\begin{document}
\maketitle

\tableofcontents

\begin{abstract}
An ideal $I$ in a Noetherian ring is called \textit{normal} if $I^n$ is integrally closed for all $n \geq 1$. 
Zariski proved that in two-dimensional regular local rings, every integrally closed ideal is normal. 
However, in dimension three and higher, this is no longer true in general, 
including monomial ideals in polynomial rings.

In this paper, we study the normality of integrally closed monomial ideals
in the polynomial ring $k[x,y,z]$ over a field $k$.
We prove that every such ideal with at most seven minimal monomial generators is normal,
thereby giving a sharp bound for normality in this setting.
The proof is based on a detailed case-by-case analysis,
combined with valuation-theoretic and combinatorial methods
via Newton polyhedra.
\end{abstract}

\section{Introduction}

Let $I$ be an ideal of a Noetherian ring $R$.
We say that $x \in R$ is \emph{integral} over $I$ if $x$ satisfies the following equation of the form
$$
x^\ell + a_1 x^{\ell-1} + \cdots + a_m = 0
$$
where $a_i \in I^i$. We define
$$
\overline{I} = \{x \in R \mid \text{$x$ is integral over $I$}\}
$$
and call it the \emph{integral closure} of $I$. We say that $I$ is \emph{integrally closed}, if the equality $I=\overline{I}$ holds, and we say that $I$ is \emph{normal}, if every power of $I$ is integrally closed. 
Throughout this paper, $\mu_R(I)$ and $\height_R I$ denotes the minimal number of generators and the height of $I$, respectively.

The study of normal ideals has a long history, beginning with Zariski's foundational work in the polynomial ring in two variables \cite{Z},
and later extended to regular local rings of dimension two \cite[Appendix~5]{ZS}.
Zariski \cite{Z} proved that in such rings, 
the product of integrally closed ideals is integrally closed (see also \cite[Chapter 14]{SH}).
Consequently, every integrally closed ideal is normal. 
Moreover, Lipman proved that the above normality result remains valid for two-dimensional rational singularities \cite{Lipman}.

In dimension $d \geq 3$, however, even for polynomial rings or regular local rings, the situation becomes more difficult, and a natural question arises:

\begin{ques}
	Does an integrally closed ideal remain normal in higher dimensions?
\end{ques}

In general, the answer is negative, even for the three dimensional case.
Such an example is obtained from the ideal
$I=\overline{(x^7,y^3,z^2)}$ in the polynomial ring $A=k[x,y,z]$ (see \cite[Exercise 1.14]{SH}). 
A detailed analysis of this example is given in Example~\ref{ex:counterexample}.

This demonstrates that the situation in higher dimensions is fundamentally different from 
the two-dimensional case, even when the base ring remains regular.
Hence additional structure on $I$ is necessary to recover normality.
One natural approach is to impose numerical constraints on the ideal,
such as bounding the number of generators.

In this direction, several positive results are known.
Let $(R,\fkm)$ be a $d$-dimensional regular local ring and let $I$ be an $\fkm$-primary ideal of $R$.
In this setting, it is known that if $I$ is integrally closed, then $I$ is normal in the following cases:

\begin{enumerate}
\item When $\mu_R(I) = d$, namely, when $I$ is a parameter ideal, by Goto \cite{G}.
\item When $\mu_R(I) = d+1$, by Ciuperc\u{a} \cite{C}.
\item When $\mu_R(I) = d+2$, by Endo, Goto, Hong, and Ulrich \cite{EndoHongUlrich}, whose result applies to regular rings, not necessarily local.
\end{enumerate}

In a different direction, Reid, Roberts, and Vitulli \cite{RRV}
studied normality for monomial ideals.
They showed that for monomial ideals in $d$ variables,
if $I, I^2, \ldots, I^{d-1}$ are all integrally closed, then $I$ is normal.
In particular, when $d=3$, it suffices to check the integral closedness of $I$ and $I^2$.

From now on, we focus on monomial ideals in polynomial rings $A$ over a field $k$.

Indeed, every integrally closed monomial ideal in two variables 
is normal, thanks to Zariski's theorem mentioned above.

Similarly, for every zero-dimensional monomial ideal $I$ in $d$ variables, it follows from (1)--(3), combined with localization,
that $I$ is normal, if $I = \overline{I}$ and $\mu_A(I) \le d+2$.
In particular, when $d=3$, this covers all cases with $\mu_A(I)\le 5$.
We provide a direct proof of this fact in Section~\ref{sec:main_results}.

Beyond this range, Endo, Goto, Hong, and Ulrich \cite{EndoHongUlrich}
proved the following result for homogeneous ideals, not restricted to the monomial case.

\begin{theorem}[{\cite[Theorem~3.4]{EndoHongUlrich}}]\label{EGHU}
Let $A = k[x_1, x_2, \ldots, x_d]$ be a polynomial ring over a field $k$ of characteristic zero.
Let $I$ be a zero-dimensional ideal of $A$ generated by $d+3$ homogeneous polynomials.
If $I$ is integrally closed, then $I$ is normal. 
\end{theorem}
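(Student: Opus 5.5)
We would prove Theorem~\ref{EGHU} by passing to the associated graded ring and the Rees algebra. Normality then reduces to integral closedness of finitely many powers, and the extra homogeneous generator is controlled through a general minimal reduction. Set $\fkm=(x_1,\dots,x_d)$ and note that $I$ is $\fkm$-primary. Since $k$ has characteristic zero it is infinite, so after a generic linear change of the homogeneous generators we can pick a minimal reduction $J=(a_1,\dots,a_d)\subseteq I$ generated by general homogeneous elements of $I$. Then $I=J+(b_1,b_2,b_3)$ and $\mu_A(I/J)=3$. Normality is local, and homogeneity lets us check integral closedness of the powers $I^n$ after localizing at $\fkm$. So we may work in the regular local ring $R=A_\fkm$ of dimension $d$ and use the Huneke--Itoh theorem: $\overline{I^{n+1}}\cap J=J\overline{I^{n}}$ for all $n$, and in particular $\overline{I^2}\cap J=JI$.

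The strategy follows the pattern of the cases $\mu=d+1$ (Ciuperc\u{a}) and $\mu=d+2$. First we prove that the reduction number satisfies $r_J(I)\le 1$, that is, $I^2=JI$. Given that, $\gr_I(R)$ is Cohen--Macaulay (Goto--Shimoda / Valabrega--Valla), the Rees algebra $R[It]$ is Cohen--Macaulay, and normality follows by Serre's criterion, since regularity in codimension one can be checked on the fiber over the normal ring $R$. To get $I^2=JI$ we study $\overline{I^2}/JI$, which is generated by the images of the six products $b_ib_j$. Huneke--Itoh gives $\overline{I^2}\cap J=JI$, so it suffices to show that no nontrivial $k$-combination of the $b_ib_j$ lies in $\overline{I^2}\setminus(J+\fkm I^2)$ modulo $JI$. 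Here homogeneity enters: degree counting in $A/J$, which is a graded Artinian complete intersection, bounds the Hilbert function of $I/J$. Combined with $I=\overline I$, in the form $\fkm^{s}\subseteq I$ for suitable $s$ with $I\subseteq\fkm^{s'}$, this forces $\dim_k(I^2+J)/J$ to be small.

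The main obstacle is this middle step: showing that three extra generators cannot produce an integral but non-reduced quadratic relation. In general one expects to need characteristic zero here, for instance via a Brian\c{c}on--Skoda or multiplier-ideal type inclusion $\overline{I^{d}}\subseteq J$ together with symmetric Macaulay duality in $A/J$, where the socle degree is the sum of the degrees of the $a_i$. We would first try to show that the socle of $A/J$ lies outside $I/J$. That would give $\lambda(I/J)\le e_0(J)-\lambda(A/I)$, and combined with the Northcott inequality $e_0(I)-\lambda(A/I)\ge\lambda(I/J)$ it would pin down $e_1(I)$ and force $\lambda(I^2/JI)=0$. If $I^2\neq JI$ cannot be ruled out directly, the fallback is to show $\operatorname{depth}\gr_I(R)\ge d-1$ via the Sally module and conclude normality with the criterion of Reid--Roberts--Vitulli type, reducing to integral closedness of $I^2$.
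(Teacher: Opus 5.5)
The paper does not prove this statement. It quotes it from Endo--Goto--Hong--Ulrich, and its own self-contained argument covers only the monomial analogue: descent in dimension by counting generators, then a case-by-case check that $\overline{I^2}=I^2$ using explicit weights $\lambda$ and Theorem~\ref{thm:RRV}. Judged on its own, your proposal has a genuine gap in the step that is supposed to finish the proof. From $I^2=JI$ you correctly get that $\gr_I(R)$ and $R[It]$ are Cohen--Macaulay, i.e.\ $(S_2)$. But $(R_1)$ is not inherited from $R$. The only height-one primes where it can fail are the minimal primes of $\fkm R[It]$, which correspond to the Rees valuations of $I$, and given $(S_2)$, regularity there is equivalent to the normality of $I$ itself.

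Your implication ``$I=\overline{I}$ and $I^2=JI$ $\Rightarrow$ $I$ normal'' never uses $\mu(I)$, and it is false. Take the paper's example $I=\overline{(x^7,y^3,z^2)}=(x^7,y^3,z^2,x^5y,x^3y^2,x^4z,y^2z,x^2yz)$ and $Q=(x^7,y^3,z^2)$. Every product of two of the five mixed generators lies in $QI$; for instance $(x^3y^2)^2=y^3\cdot x^6y$, $x^3y^2\cdot y^2z=y^3\cdot x^3yz$, $x^4z\cdot x^2yz=z^2\cdot x^6y$, and $x^5y\cdot x^4z=x^7\cdot x^2yz$. Hence $I^2=QI$, so the associated graded ring and the Rees algebra of $I_\fkm$ are Cohen--Macaulay. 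Yet $x^6y^2z\in\overline{I^2}\setminus I^2$. So even a complete proof of $r_J(I)\le 1$ would leave the real content untouched: the integral closedness of $I^2,\dots,I^{d-1}$ (for $d=3$, of $I^2$). That is exactly what the paper verifies explicitly in its monomial setting.

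The intermediate claim $r_J(I)\le 1$ is also not established, and the tools you name cannot establish it.

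\begin{enumerate}
\item $A_\fkm/J$ is an Artinian complete intersection, hence Gorenstein, so its one-dimensional socle lies in every nonzero ideal. Since $J\subsetneq I$, the socle always lies in $I/J$; the statement you ``would first try to show'' is false.
\item The two inequalities you then combine are identities: $\ell(I/J)=\ell(A/J)-\ell(A/I)=e_0(I)-\ell(A/I)$.
\item Northcott's inequality is actually $e_1(I)\ge e_0(I)-\ell(A/I)$, and equality there is equivalent to $I^2=JI$. So nothing gets pinned down.
\item $\overline{I^2}/JI$ is generated by the products $b_ib_j$ only if $\overline{I^2}=I^2$, which is what you are trying to prove.
\item Given Huneke--Itoh, $I^2=JI$ amounts to $b_ib_j\in J$. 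You give no argument for this, and neither homogeneity nor characteristic zero is used in any precise way.
\end{enumerate}

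The fallback via $\operatorname{depth}\gr_I(R)\ge d-1$ also does not help: it just restates that $\overline{I^2}=I^2$ must be proved.
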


Note that Theorem \ref{EGHU} covers the case $\mu_A(I)=d+3$ for homogeneous ideals over fields of characteristic zero.
Indeed, their proof relies on methods that are inherently characteristic-dependent.
In the monomial setting, however, integral closedness can be characterized purely in terms of the Newton polyhedron, and hence is independent of the base field.
Consequently, their result implies the normality of monomial ideals $I$ with $\mu_A(I)=d+3$ and $I=\overline{I}$ over an arbitrary field.

Since our main interest lies in the next case beyond this range,
namely $\mu_A(I)=7$ in three variables,
we also include a self-contained proof of the case $\mu_A(I)\le 6$.
Although this case is already covered by the above results,
the argument presented here illustrates the method used throughout the paper
and prepares the reader for the more involved analysis in the seven-generator case.

With this perspective in mind, we now state our main result,
which extends the previously known bound in the monomial setting.

\begin{maintheorem*}\label{maintheorem_intro}
	Let $k$ be a field and $A=k[x,y,z]$ be the polynomial ring in three variables.
	Let $I$ be a monomial ideal with $\height_AI = 3$ and $I = \overline{I}$. 
	If $\mu_A(I) \le 7$, then $I$ is normal.
\end{maintheorem*}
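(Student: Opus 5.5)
We plan to reduce, via Reid--Roberts--Vitulli \cite{RRV}, to the single claim that $I^2=\overline{I^2}$. For monomial ideals in three variables, integral closedness of $I$ and $I^2$ already gives normality. By the Newton polyhedron description, $\overline{I^2}$ is spanned by the lattice points of $2\,\mathrm{NP}(I)$. So the task is the following: every lattice point $a\in\mathbb{Z}_{\ge0}^3$ in $2\,\mathrm{NP}(I)$ must dominate (componentwise) some sum $u+v$ of two exponent vectors of $I$. Since $I=\overline I$, the lattice points of $\mathrm{NP}(I)$ are exactly the exponents of monomials in $I$. The statement is therefore an integer decomposition property: $2P\cap\mathbb{Z}^3=(P\cap\mathbb{Z}^3)+(P\cap\mathbb{Z}^3)$ up to the recession cone $\mathbb{R}_{\ge0}^3$, where $P=\mathrm{NP}(I)$.

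The cases $\mu_A(I)\le 6$ are handled as the paper indicates, by results (1)--(3) and Theorem~\ref{EGHU} (which holds over any field in the monomial case). We would still redo them directly to fix the method. Height $3$ forces $I$ to contain pure powers $x^a,y^b,z^c$. Hence with $\mu_A(I)=7$ there are exactly four mixed generators.

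For the main case we classify the compact faces of $P$. Each bounded facet is cut out by a valuation $v_w(x^\alpha)=w\cdot\alpha$ with $w\in\mathbb{Z}_{>0}^3$. We have $\overline{I^2}=\bigcap_w\{f: v_w(f)\ge 2\min v_w(I)\}$, with $w$ running over the facet normals. Take a monomial $m=x^\alpha$ in $\overline{I^2}$. Write $\alpha/2$ as a convex combination of vertices of a compact face $F$ plus a recession vector, and triangulate $F$ into lattice triangles whose vertices are generators. Then $\alpha\in 2T+\mathbb{R}^3_{\ge0}$ for a triangle $T$ with lattice vertices $g_1,g_2,g_3$. If the triangle is unimodular, or more generally if the cone over $T$ has the integer decomposition property, then $\alpha\ge g_i+g_j$ for some $i,j$ (or $\alpha\ge g_i+h$ for some lattice point $h$ of $P$) and we are done. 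The possible obstruction is a non-unimodular triangle: this is exactly what happens in Example~\ref{ex:counterexample}, where the only lattice points in a fat triangle are its vertices.

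The key step, and the main obstacle, is to show that with only four mixed generators, integral closedness of $I$ itself rules out this bad configuration. Concretely: if a triangle $T$ of generators has a lattice point of $2T$ not of the form $g_i+g_j$, then $T$ is fat. We expect its fatness to force a lattice point of $P$ to lie strictly below the convex hull of the given generators. That point would be an extra monomial in $\overline I=I$, hence an extra minimal generator. We would then count generators and show that every such fat configuration needs at least eight minimal generators, as $\overline{(x^7,y^3,z^2)}$ does.

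We would carry this out by a case analysis on the combinatorial type of the compact part of $P$: how many of the four mixed generators lie on the coordinate planes (two-variable monomials) versus in the interior, and which faces they span. In each case we would apply Pick-type and lattice-width arguments in the $2$-dimensional faces and in projections to coordinate planes. In two variables Zariski's theorem handles the boundary faces, so the real work concerns triangles with a vertex of full support. We expect the hardest subcase to be the one where all four mixed generators are supported on two or three variables and form a single large triangle with the pure powers. There we would try first to produce an explicit lattice point under the face, using the fact that a lattice triangle with no extra lattice points has controlled normalized area after projection.
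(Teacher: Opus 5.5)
There is a genuine gap, and it sits at the step you treat as routine, not only at the one you flag as the main obstacle. You claim that if a compact lattice triangle $T$ is unimodular, or the cone over $T$ has the integer decomposition property, then every lattice point $\alpha\in 2T+\mathbb{R}^3_{\ge 0}$ dominates some $g_i+g_j$ (or $g_i+h$). This is false. Here $\alpha=2q+2r$ with $q\in T$ and $r\ge 0$ real, and $2q$ need not be a lattice point, so the IDP of $T$ says nothing about $\alpha$. Moreover, every lattice polygon is IDP, so your test ``a lattice point of $2T$ not of the form $g_i+g_j$'' can never fail.

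The paper's own Example~\ref{ex:counterexample} shows the failure. For $I=\overline{(x^7,y^3,z^2)}$ the only compact facet is $T=\mathrm{conv}\{(7,0,0),(0,3,0),(0,0,2)\}$, lying on the plane $6u_1+14u_2+21u_3=42$. Its only lattice points are its vertices, and its edge vectors $(-7,3,0),(-7,0,2)$ form a basis of $\{v\in\mathbb{Z}^3: 6v_1+14v_2+21v_3=0\}$. So $T$ is unimodular in its affine lattice. Nevertheless $\alpha=(6,2,1)$ has level $85$, just above $2T$. It admits no $u+v\le\alpha$ with $u,v\in\mathrm{NP}(I)\cap\mathbb{Z}^3$: that would need levels $42,42$ plus a nonnegative lattice vector of level $1$, or levels $42,43$. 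Neither $1$ nor $43$ is a value of $6u_1+14u_2+21u_3$ on $\mathbb{Z}^3_{\ge 0}$.

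So the obstruction lives off the face, not in $2T$. It sits in the $4$-dimensional cones of $\calR(I)$ spanned by $(g_1,1),(g_2,1),(g_3,1)$ and a recession direction $(e_j,0)$, whose determinants here are $6,14,21$. If by ``fat'' you instead mean $|\det(g_1,g_2,g_3)|>1$, that already happens in normal ideals. For example, the facet $\mathrm{conv}\{(1,1,0),(1,0,1),(0,1,1)\}$ of $(x^3,y^3,z^3,xy,xz,yz)$ has $|\det|=2$. So fatness in that sense cannot force an extra generator, and your generator count has nothing to run on.

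Consequently the real content of the proof is missing. You must control the lattice points of $2\,\mathrm{NP}(I)$ lying just above the compact faces, and ``fatness forces a lattice point under the face, hence an eighth generator'' is stated as an expectation with no mechanism.

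Your anticipated hard case is also misdirected. Theorem~\ref{thm:7gens} shows that when $\mu_A(I)=7$ and $x,y,z\notin I$, no minimal generator lies in $(xyz)$. All four mixed generators are two-variable monomials, with $c_3=c_4=1$, $(a_1,a_2)=(2,1)$ up to symmetry, and $xz$ or $yz$ in $I$.

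The paper uses exactly this structure to make the check finite:
\begin{enumerate}
\item Monomials of $(I^2:\fkm)\setminus I^2$ outside $(xyz)$ are handled by projecting to a two-variable ring and applying Zariski's theorem.
\item The remaining monomials are listed explicitly.
\item For each listed $g$, a weight $\lambda$ with $\ord_\lambda(g)<2\ord_\lambda(I)$ is exhibited. This certifies that $g$ lies strictly below a supporting plane of $2\,\mathrm{NP}(I)$, which is precisely the off-face condition your triangulation argument does not see.
\end{enumerate}
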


Our proof method also yields the following extension to higher dimensions.

\begin{corollary*}\label{cor:higher_dim_intro}
	Let $k$ be a field, $d \geq 3$, and $A = k[x_1,\ldots,x_d]$ the polynomial ring.
	Let $I$ be a monomial ideal with $\\height_AI=d$ and $I=\overline{I}$.
	If $\mu_A(I) \leq d+4$, then $I$ is normal.
\end{corollary*}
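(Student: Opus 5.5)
The plan is to reduce the higher-dimensional statement to the three-variable Main Theorem, together with the known cases $\mu_A(I)\le d+3$, by an induction on $d$ that uses the special shape of a zero-dimensional monomial ideal. Since $\height_A I = d$ and $I$ is monomial, $I$ contains a pure power $x_i^{a_i}$ of every variable. These $d$ pure powers are part of any minimal monomial generating set, so at most four minimal generators $m_1,\dots,m_s$ ($s\le 4$) are not pure powers. Let $S$ be the set of variables dividing at least one $m_j$.

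\textbf{Case $|S| = d$.} Here the mixed generators involve every variable, and I want to show this cannot happen once $d$ is large. For $d\ge 5$ it is impossible: I expect (to be checked) that integral closedness of $I$ forces the mixed generators to be "spread out" over all coordinate subspaces. For example, $\overline{(x_1^{a_1},\dots,x_d^{a_d})}$ already needs many mixed generators once some $a_i\ge 2$. So the real work is to verify that for a variable $x_i \notin S$ the ideal splits, and that the remaining small cases $d=3,4$ with $|S|=d$ are handled directly.

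\textbf{Case $x_d\notin S$.} Then $I = J A + (x_d^{a})$, where $J\subset k[x_1,\dots,x_{d-1}]$ is monomial, zero-dimensional, has $\mu(J)=\mu(I)-1\le (d-1)+4$, and is integrally closed. Integral closedness of $J$ follows because the Newton polyhedron of $I$ restricted to $x_d=0$ is that of $J$. Induction then gives $J$ normal.

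The key lemma I need is that if $J$ is normal in $B=k[x_1,\dots,x_{d-1}]$, then $I=JA+(x_d^a)$ is normal in $B[x_d]$. The natural approach is to note that $I^n=\sum_{i} J^{n-i}x_d^{ai}$ and that a monomial $u\,x_d^e$ lies in $\overline{I^n}$ exactly when $u\in \overline{J^{\,n-e/a}}$ in the fractional sense, i.e.\ when the Newton polyhedron of $I$ is the "join" of $\mathrm{NP}(J)$ and the point $a e_d$. The danger is that for $a\nmid e$ one needs $u\in\overline{J^{\,n-\lfloor e/a\rfloor}}$, but one only gets membership in a rational power $\overline{J^{\,n-e/a}}$. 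So the lemma is really false in general; this is presumably why the hypothesis is $d+4$ and not arbitrary.

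I therefore expect the main obstacle to be exactly this join step. The fallback I would try first is to avoid the induction and argue as in the paper's method via the Reid--Roberts--Vitulli reduction: check that $I^n$ is integrally closed for $n\le d-1$ by testing monomials against the finitely many Rees valuations, i.e.\ the facets of the Newton polyhedron. A generator count of $d+4$ should force all but at most three or four facets to be coordinate-type. The case analysis is then the same as in three variables, with the extra coordinates inert.
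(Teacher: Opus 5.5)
Your induction skeleton is the right one: split off a variable, pass to $d-1$ variables with at most $(d-1)+4$ generators, and end at the three-variable Main Theorem. However, the proposal is missing the ingredient that makes it work, so both of your open points stay open.

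That ingredient is Lemma~\ref{lem:key}. If $x_i,x_j\notin I$, then $I\cap k[x_i,x_j]$ is an integrally closed monomial ideal of order at least $2$. By Lemma~\ref{lem:order_mingens} it has at least three minimal generators, so $I$ has a minimal generator $x_i^{\ell}x_j^{m}$ with $\ell,m\ge 1$. Distinct pairs give distinct generators, so if no variable lies in $I$ then $\mu_A(I)\ge d+\binom{d}{2}>d+4$ for every $d\ge 4$. Hence some variable, say $x_d$, lies in $I$, and then no other minimal generator is divisible by $x_d$. So your case $|S|=d$ is impossible for all $d\ge 4$, not just $d\ge 5$, and there is no residual $d=4$ case. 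In your proposal this case rests on ``I expect (to be checked)'' plus a heuristic about the single family $\overline{(x_1^{a_1},\dots,x_d^{a_d})}$, which is not an argument.

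The ``join'' obstacle that makes you abandon the induction is a misdiagnosis. The variable to split off is one lying in $I$, i.e.\ $a=1$, so $I=JA+(x_d)$, and then no fractional exponents occur. If $u\,x_d^{e}\in\overline{I^{n}}$ with $u$ a monomial in $x_1,\dots,x_{d-1}$ and $e<n$, the Newton polyhedron gives $u\in\overline{J^{\,n-e}}=J^{n-e}$, hence $u\,x_d^{e}\in I^{n}$. This is the step the paper calls ``readily implies''.

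Even in your case $x_d\notin S$ with $a\ge 2$, the same lemma forces $x_j\in I$ for all $j<d$. Then $I=(x_1,\dots,x_{d-1},x_d^{a})$, and you may split off $x_1$ instead. The failures of your join lemma for $a\ge 2$ occur only when $I$ is not integrally closed, which the hypothesis excludes. For example, with $J=(x_1,x_2)^2$ and $a=2$ one has $x_1x_3\in\overline{I}\setminus I$.

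So this step has nothing to do with the bound $d+4$. That bound is simply inherited from the bound $7$, because each reduction lowers both $d$ and $\mu_A(I)$ by one. Your Rees-valuation fallback asserts its key points (``all but three or four facets are coordinate-type'', ``extra coordinates inert'') without proof. With the counting lemma inserted, your first plan becomes exactly the paper's proof of Theorem~\ref{thm:higher_dim}.
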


\begin{remark}
The bound $\mu_A(I)\le 7$ in Main Theorem is sharp.
The counterexample $I=\overline{(x^7, y^3, z^2)}$ mentioned above 
has eight generators and fails to be normal. 
This arises from a natural configuration of exponent vectors
that cannot occur in the classification for $\mu_A(I)\le 7$,
reflecting a genuine structural change in the geometry of the Newton polyhedron. 
More details are given in Example~\ref{ex:counterexample}.
Similar counterexamples in higher dimensions are obtained by adjoining variables.
\end{remark}

In what follows, throughout this paper, by an \emph{integrally closed monomial ideal}
we mean a monomial ideal with $I=\overline{I}$.

The proof of Main Theorem is based on a detailed case-by-case analysis.
After classifying all zero-dimensional integrally closed monomial ideals with six or seven generators
via their restrictions to polynomial rings in two variables, 
we verify normality in each case through valuation-theoretic tests combined with Reid-Roberts-Vitulli's criterion.

\medskip

We now discuss the geometric interpretation of our results via Rees algebras.
The Rees algebra $\calR(I)$ of an ideal $I$ has a natural geometric interpretation
as the coordinate ring of the blowup of $\Spec(A)$ along the closed subscheme $\V(I)$ defined by $I$.
In a normal domain, the normality of $\calR(I)$ is equivalent to the normality of the ideal $I$.

When $I$ is a monomial ideal in a polynomial ring, the Rees algebra $\calR(I)$ is an affine semigroup ring.
Therefore, if, in addition, $I$ is normal, Hochster's theorem \cite{Hochster} implies that
$\calR(I)$ is a normal Cohen-Macaulay ring with favorable geometric properties.

This viewpoint motivates the study of normal monomial ideals via affine semigroup rings.

\medskip

The organization of this paper is as follows.
Section~\ref{sec:preliminaries} collects basic facts on monomial ideals
and integral closures that will be used throughout the paper.
In Section~\ref{sec:main_results}, we prove the main theorem for
integrally closed monomial ideals $I$ in three variables with $\height_A I = 3$ and 
$\mu_A(I)\le 7$, reducing to the two-variable case when possible
and performing a detailed analysis in the remaining cases.
Section~\ref{sec:consequences} discusses further consequences of the main result,
including an extension to higher-dimensional polynomial rings
and structural results for integrally closed monomial ideals. Furthermore, we explore Rees algebras
and reduction numbers of normal monomial ideals.

\medskip

Throughout this paper, let $k$ be a field, and $\mathbb{N}$ denotes the set of positive integers.
In addition, for a real number $\alpha$, $\ceil{\alpha}$ denotes the minimum integer $n$ satisfying $\alpha \le n$. Note that $a \le b\ceil{\frac{a}{b}} \le a+b-1$ for any $a,b \in \mathbb{N}$.

\section{Preliminaries}\label{sec:preliminaries}

We collect several well-known results on integrally closed ideals that will be used throughout this paper.

\begin{lemma}[{cf. \cite[Proposition 1.5.2]{SH}}]\label{dvr}
Let $R$ be a discrete valuation ring. Then every ideal of $R$ is integrally closed.
\end{lemma}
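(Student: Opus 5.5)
The statement to prove is Lemma~\ref{dvr}: every ideal of a discrete valuation ring $R$ is integrally closed. The plan is to use the explicit structure of ideals in a DVR together with the valuation inequality. Let $v$ be the normalized valuation of $R$ and $t$ a uniformizer. Every nonzero ideal then has the form $I=(t^n)=\{r\in R \mid v(r)\ge n\}$ for a unique $n\ge 0$. The zero ideal is integrally closed because $R$ is a domain: if $x^\ell=0$ then $x=0$. So it suffices to show that $\overline{(t^n)}\subseteq (t^n)$, the reverse inclusion being automatic.

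\textbf{Key step.} Let $x\in R$ be integral over $I=(t^n)$, so that
\[
x^\ell + a_1x^{\ell-1}+\cdots+a_\ell=0, \qquad a_i\in I^i=(t^{ni}).
\]
Suppose, for contradiction, that $v(x)<n$. For each $i\ge 1$,
\[
v(a_ix^{\ell-i})\ge ni+(\ell-i)v(x)>\ell\, v(x)=v(x^\ell).
\]
So $x^\ell$ is the unique term of strictly minimal valuation. The strict form of the ultrametric inequality then gives $v(x^\ell + a_1x^{\ell-1}+\cdots+a_\ell)=\ell v(x)<\infty$, which contradicts the fact that the sum is $0$. Hence $v(x)\ge n$, that is, $x\in I$.

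\textbf{Alternative.} One could instead argue that $R$ is a normal domain and $I=(t^n)$ is principal. Then $x$ integral over $I$ means that $x/t^n\in\operatorname{Frac}(R)$ is integral over $R$, hence lies in $R$ by normality. This route does not need the DVR structure beyond principality and normality.

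There is no real obstacle here. The only point needing care is the strictness of the inequality $ni+(\ell-i)v(x)>\ell v(x)$, which is exactly where the assumption $v(x)<n$ is used.
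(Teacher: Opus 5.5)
Your proof is correct. The paper gives no argument of its own for this lemma; it only points to \cite[Proposition 1.5.2]{SH}. That cited result is the fact that principal ideals in an integrally closed domain are integrally closed, so the route the paper has in mind is your \emph{Alternative}. Every ideal of a DVR is $(t^n)$, and dividing the equation of integral dependence by $t^{n\ell}$ shows that $x/t^n$ is integral over the normal domain $R$, hence lies in $R$.

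Your main argument is genuinely different: it works directly with the valuation and the strict ultrametric inequality. The two routes generalize in different directions.
\begin{itemize}
\item The normality route covers principal ideals generated by nonzero elements in any integrally closed domain, with no valuation in sight. It says nothing about non-principal ideals.
\item The valuation route adapts to arbitrary valuation domains $V$, where ideals need not be principal or of the form $\{v\ge n\}$. If $x\notin I$, then $I\subseteq x\mathfrak{m}_V$ by total ordering of ideals. Hence $v(a_i x^{\ell-i})>\ell\, v(x)$ for all $i\ge 1$, and the same contradiction follows.
\end{itemize}
For the paper's only use of the lemma, namely $V=k[[t]]$ in Proposition~\ref{prop:ord_test}, either argument suffices.
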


\begin{theorem}[\cite{Z}, {\cite[Appendix 5, Theorem 2']{ZS}}]\label{zariski}
	Let $R$ be a $2$-dimensional regular local ring. Then the product of integrally closed ideals of $R$ is integrally closed. Thus, every integrally closed ideal of $R$ is normal.
\end{theorem}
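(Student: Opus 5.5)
This is Zariski's classical theorem, and I would prove it by the standard route through contracted ideals and unique factorization of complete ideals. Let $(R,\fkm)$ be a two-dimensional regular local ring.

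\textbf{Reduction to the infinite residue field case.} Pass to $R(X)=R[X]_{\fkm R[X]}$. This extension is faithfully flat with regular closed fibre, and integral closure commutes with it: $\overline{IR(X)}=\overline{I}R(X)$ and $\overline{I}R(X)\cap R=\overline{I}$. Hence the statement may be checked after this base change.

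\textbf{Step 1: contracted ideals.} For an $\fkm$-primary ideal $I$, call $I$ \emph{contracted} if $I=IS\cap R$ for some quadratic transform $S=R[\fkm/x]$. The key numerical characterization is that $I$ is contracted iff $\mu(I)=\ord(I)+1$, and then one can choose $x\in\fkm\setminus\fkm^2$ with $I:x=I:\fkm$. Integrally closed ideals are contracted. For if $\mu(I)\le\ord(I)$, a generic $x$ yields $I=IS\cap R$ by a length computation $\ell(R/(I,x))=\ord(I)$, using that $\fkm^{r}\subseteq I:\fkm$ type arguments fail otherwise. The product of contracted ideals is contracted: using a common general $x$, one gets $\mu(IJ)\le \mu(I)+\mu(J)-1=\ord(IJ)+1$ and the reverse inequality always holds.

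\textbf{Step 2: induction via quadratic transforms.} Induct on $\ell(R/I)+\ell(R/J)$, or equivalently on the total multiplicity of base points. Write $I=\fkm^{r}\cdot(\text{transform})$. More precisely, for each first neighborhood point, i.e. each localization $T$ of $S$ at a maximal ideal containing $\fkm S$, the transform $I^T=x^{-r}IT$ is again integrally closed in $T$ and has smaller colength. Since $IJ$ is contracted from $S$ by Step 1, and since
\[
\overline{IJ}\subseteq \overline{IJ S}\cap R=x^{r+s}\,\overline{I^{S}J^{S}}\cap R,
\]
it suffices to show $\overline{I^TJ^T}=I^TJ^T$ for each such $T$. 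This holds by induction, because $T$ is again a two-dimensional regular local ring and the transforms have strictly smaller invariants. The ideals may be principal or $T$ may be a DVR-like situation; here Lemma~\ref{dvr} and the fact that products of principal ideals are trivially closed handle the base cases. Gluing the finitely many $T$'s gives $\overline{IJS}=IJS$, and contractedness of $IJ$ yields $\overline{IJ}=IJ$. Normality follows by taking $J=I^{n-1}$ inductively.

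\textbf{Main obstacle.} The delicate point is Step 1: showing that a complete ideal satisfies $\mu(I)=\ord(I)+1$ and that products of contracted ideals are contracted with a common $x$. This is where the infinite residue field is needed, to pick $x$ general for both ideals simultaneously. The rest is bookkeeping with transforms.

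Since the paper cites this theorem, it would in practice simply refer to \cite[Chapter 14]{SH} for these details.
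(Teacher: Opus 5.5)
The paper does not prove this statement; it cites Zariski and Zariski--Samuel. Your outline (contracted ideals, quadratic transforms, induction on colength) is the classical route of those sources and of \cite[Chapter~14]{SH}. As written, however, the two lemmas you yourself call load-bearing are not established, and both are argued incorrectly.

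\textbf{Integrally closed implies contracted.} In Step~1 your justification runs backwards. An $\fkm$-primary ideal with $\mu(I)\le\ord(I)$ is \emph{not} contracted from any $R[\fkm/x]$. A generic $x$ gives $\ell(R/(I,x))=\ord(I)$ for every $\fkm$-primary $I$, so that observation proves nothing, and the hypothesis $I=\overline I$ is never used. A correct argument is valuative. Pick $x\in\fkm\setminus\fkm^2$ with $v(x)=v(\fkm)$ for each of the finitely many Rees valuations $v$ of $I$ and of $J$; this is where the infinite residue field enters, and it also gives the common $x$. Write $\fkm=(x,y)$. If $xa\in I$, then $v(ya)\ge v(xa)\ge v(I)$ for all these $v$, so $ya\in\overline I=I$. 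Hence $I:x=I:\fkm$, which is the criterion for $I$ to be contracted from $R[\fkm/x]$.

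\textbf{Products of contracted ideals.} Here you have the inequalities reversed. For contracted $I,J$ one has $\mu(I)+\mu(J)-1=\ord(IJ)+1$, so your first inequality is just the bound $\mu(K)\le\ord(K)+1$, which holds for every $\fkm$-primary $K$. The reverse inequality $\mu(IJ)\ge\ord(IJ)+1$ is exactly what has to be proved, and it fails in general: for $I=J=(x^2,y^2)$ one gets $\mu(IJ)=3<5$.

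A direct proof goes as follows. Write $I=x(I:x)+fR$ and $J=x(J:x)+gR$, where the images of $f\in I$ and $g\in J$ generate $I(R/xR)$ and $J(R/xR)$ in the DVR $R/xR$. Expand $IJ=x^2(I:x)(J:x)+xf(J:x)+xg(I:x)+fgR$. If $xa\in IJ$, the primality of $xR$ together with $fg\notin xR$ lets you cancel $x$, giving $a\in x(I:x)(J:x)+f(J:x)+g(I:x)+fgR$. Then $ya\in IJ$, because $y(I:x)\subseteq I$ and $y(J:x)\subseteq J$. So $IJ:x=IJ:\fkm$.

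\textbf{Transforms.} In Step~2, the claims that $I^T$ is again integrally closed and that $\ell(T/I^T)<\ell(R/I)$ are genuine theorems, not bookkeeping, and you only assert them. The first can be obtained, for example, from $\overline{\fkm^nI}=\fkm^nI$ together with $\overline{KS}\cap R=\overline K$ for $x$ general with respect to the Rees valuations of $K$. The second follows from the length formula $\ell(R/I)=\binom{r+1}{2}+\ell_R(S/I^S)$ for $I$ contracted from $S$.

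\textbf{Smaller points.} You never reduce from arbitrary integrally closed ideals to $\fkm$-primary ones. To do so, write $I=aI'$ with $\height I'\ge 2$ in the UFD $R$, and use $\overline{aK}=a\overline K$ in a normal domain. The appeal to DVRs is misplaced: each $T$ is a two-dimensional regular local ring, and transforms of $\fkm$-primary ideals are $\fkm_T$-primary or equal to $T$, so the induction bottoms out at the unit ideal.
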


\begin{theorem}[{\cite{RRV}, \cite[Theorem 1.4.10]{SH}}] \label{thm:RRV}
Let $I$ be a monomial ideal in a polynomial ring in $d$ variables over a field. If $I^i$ is integrally closed for all $1 \leq i \leq d-1$, then $I$ is normal.
\end{theorem}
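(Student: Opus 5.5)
The plan is to pass to the Newton polyhedron and prove the stronger identity
\[
\overline{I^{n}} = I\,\overline{I^{n-1}} \qquad \text{for all } n \ge d .
\]
Normality then follows by induction on $n$. The hypothesis gives $\overline{I^i}=I^i$ for $i\le d-1$. If $\overline{I^{n-1}}=I^{n-1}$ for some $n-1\ge d-1$, then $\overline{I^n}=I\,\overline{I^{n-1}}=I\cdot I^{n-1}=I^n$.

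\emph{Translation into convex geometry.} Let $G\subset\mathbb{Z}_{\ge0}^d$ be the set of exponents of the minimal monomial generators of $I$, and let
\[
P=\mathrm{conv}(G)+\mathbb{R}_{\ge0}^d
\]
be the Newton polyhedron. I will use the standard fact that $\overline{I^n}$ is the monomial ideal spanned by the monomials $x^a$ with $a\in nP\cap\mathbb{Z}^d$. To handle all $n$ at once, I homogenize: let $C\subset\mathbb{R}^{d+1}$ be the cone generated by the vectors $(v,1)$ for $v\in G$ and $(e_i,0)$ for $1\le i\le d$. Then $a\in nP$ if and only if $(a,n)\in C$. It therefore suffices to show the following: if $(a,n)\in C\cap\mathbb{Z}^{d+1}$ with $n\ge d$, then there is some $v\in G$ such that $(a-v,n-1)\in C$. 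Indeed, $a-v$ is then an integer point of $(n-1)P$ with nonnegative coordinates, so $x^a=x^v\cdot x^{a-v}\in I\,\overline{I^{n-1}}$.

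\emph{Key step: reduce to a boundary point and apply Carathéodory.}
\begin{enumerate}
\item Consider the ray $(a-te_1,n)$ for $t\ge0$. It leaves $C$ for large $t$, because every point of $C$ has nonnegative first $d$ coordinates. Let $t_0\ge0$ be maximal with $(a-t_0e_1,n)\in C$.
\item The point $b=(a-t_0e_1,n)$ then lies on the boundary of $C$, hence in a proper face of $C$, which spans a subspace of dimension at most $d$.
\item Carathéodory's theorem applied inside that face writes $b$ as a nonnegative combination of at most $d$ generators of $C$.
\item Let $(v_1,1),\dots,(v_m,1)$ be the generators of the form $(v,1)$ that appear, with coefficients $c_1,\dots,c_m$, so $m\le d$. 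Comparing last coordinates gives $\sum_j c_j=n\ge d\ge m$. Hence some $c_j\ge1$.
\item Subtracting $(v_j,1)$ from $b$ leaves a nonnegative combination of generators, namely $b-(v_j,1)\in C$.
\item Adding back $t_0(e_1,0)\in C$ gives $(a-v_j,n-1)\in C$, as required.
\end{enumerate}

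\emph{Main obstacle.} The delicate point is exactly the need for the boundary reduction. A naive application of Carathéodory directly to $(a,n)$ in $\mathbb{R}^{d+1}$ allows up to $d+1$ vertex generators. In that case the coefficients could all be less than $1$ even though they sum to $n=d$, and the argument would only give the weaker threshold $n\ge d+1$. Pushing the point to the boundary of $C$ along a coordinate direction saves one dimension. I need to check that this step is legitimate. First, $t_0$ exists, since $C$ is closed, being finitely generated. Second, the resulting point lies in a face of dimension at most $d$: a boundary point of a full-dimensional polyhedral cone always lies in a facet. Third, adding $t_0(e_1,0)$ back keeps the point in $C$, which holds because $C$ is closed under addition.
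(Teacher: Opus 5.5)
Your proof is correct. The paper does not prove this theorem; it quotes it from Reid--Roberts--Vitulli and Swanson--Huneke (Theorem 1.4.10). Your argument is essentially the standard Carath\'eodory-type proof behind that citation, phrased through the homogenized cone $C$. You reduce normality to the identity $\overline{I^n}=I\,\overline{I^{n-1}}$ for $n\ge d$. You then obtain that identity by first pushing $(a,n)$ to the boundary of $C$, which saves one dimension in Carath\'eodory, and then noting that at most $d$ vertex coefficients summing to $n\ge d$ force one of them to be $\ge 1$.

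The only step you use without stating it is in step 3: you need that a point $b$ of a face $C\cap\{\ell=0\}$, with $\ell\ge 0$ on $C$, can only be written using generators $g$ with $\ell(g)=0$. This follows at once from $0=\ell(b)=\sum_g\lambda_g\,\ell(g)$ with all terms nonnegative. In step 4 you also need $m\ge 1$, which holds because the last coordinate of $b$ is $n\ge 1$. With these two remarks the argument is complete.
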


In general, the integral closure of a monomial ideal $I$ in a polynomial ring
can be described in terms of the convex hull of the Newton polyhedron
associated with the generators of $I$.

Let $k$ be a field and let $A = k[x,y]$ be the polynomial ring in two variables.
For a monomial ideal $I \subseteq A$, we define
$$
\ord(I) = \min \{ a+b \mid x^a y^b \in I \}.
$$

\begin{lemma}\label{lem:order_mingens}
Let $I$ be an integrally closed monomial ideal in $A = k[x,y]$ with
$\height_AI=2$. Then $\mu_A(I) = \ord(I) + 1$.
\end{lemma}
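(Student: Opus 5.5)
We need to prove that for an integrally closed monomial ideal $I$ of height $2$ in $A=k[x,y]$, one has $\mu_A(I)=\ord(I)+1$. Write $m=\ord(I)$ and list the minimal monomial generators as $x^{a_0}y^{b_0},\dots,x^{a_s}y^{b_s}$ with $a_0>a_1>\cdots>a_s=0$ and $0=b_0<b_1<\cdots<b_s$; since $I$ is $\fkm$-primary, pure powers of both $x$ and $y$ lie in $I$, so the first and last generators are pure powers. We must show $s=m$, and we prove the two inequalities separately.

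\textbf{The bound $s\le m$.} This holds for any $\fkm$-primary monomial ideal, without using integral closedness. We have $I\subseteq \fkm^m$, and $\mu(I)\le\mu(\fkm^m)=m+1$ does not follow directly from the inclusion, so we argue with the generators instead. Since the $a_i$ strictly decrease and the $b_i$ strictly increase, we get $a_0\ge s$ and $b_s\ge s$, but this alone gives no relation to $m$. The cleaner route is the standard fact that for an $\fkm$-primary ideal in a two-dimensional regular local ring (after localizing at $\fkm$, which preserves minimal monomial generators), $\mu(I)\le \ord(I)+1$. This follows from the Hilbert--Burch theorem: the $(s)\times(s+1)$ relation matrix has entries in $\fkm$, so its maximal minors, which generate $I$ up to a unit, lie in $\fkm^{s}$. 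Hence $m\ge s$.

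\textbf{The bound $s\ge m$.} This is where integral closedness enters, and it is the main point of the proof. Consider consecutive generators $x^{a_{i-1}}y^{b_{i-1}}$ and $x^{a_i}y^{b_i}$. We claim that integral closedness forces $a_{i-1}-a_i=1$ or $b_i-b_{i-1}=1$ for every $i$. Suppose instead that both gaps are at least $2$. Then the monomial $x^{a_i+1}y^{b_{i-1}+1}$ is not in $I$: it is not a multiple of any generator, since it lies strictly between the staircase corners. On the other hand, its exponent vector $(a_i+1,b_{i-1}+1)$ should lie in the convex hull of the exponent set plus $\mathbb{R}_{\ge0}^2$. To see this, observe that $(a_i+1,b_{i-1}+1)$ dominates a convex combination $t(a_{i-1},b_{i-1})+(1-t)(a_i,b_i)$. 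Taking $t=1/(a_{i-1}-a_i)\le 1/2$ gives first coordinate $a_i+1$ and second coordinate $b_i-(b_i-b_{i-1})/(a_{i-1}-a_i)$. The inequality we need is not automatic, so in the write-up we will choose the segment point on the line through the two corners and check that $(a_i+1,b_{i-1}+1)$ lies on or above it, using both gaps $\ge2$ (the point is the "inner corner" shifted diagonally by $(1,1)$). This is the delicate step. Granting the claim, the order is attained along the staircase and changes by at most $1$ between consecutive generators in a controlled way, so $\sum_i \min(\text{gaps})$ bounds the total: $m\le a_0$ and $m\le b_s$.

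To finish, write $m\le a_j+b_j$ for each $j$. Telescoping from $(a_0,0)$ to $(0,b_s)$, each step reduces $a$ by $\alpha_i$ and raises $b$ by $\beta_i$, with $\min(\alpha_i,\beta_i)=1$. We choose $j$ so that the steps before $j$ have $\alpha_i=1$ and the steps after have $\beta_i=1$; such $j$ exists if the step types switch once, which will follow from convexity of the Newton polygon. Then $a_j\le s-j$ and $b_j\le j$, so $m\le a_j+b_j\le s$. The main obstacle is justifying the single switch by convexity, that is, showing the slopes $\beta_i/\alpha_i$ are nondecreasing up to this unit-gap rounding. If that fails, we would instead pick $j$ minimizing $a_j+b_j$ and argue directly.
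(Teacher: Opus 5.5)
Your upper bound $\mu_A(I)\le\ord(I)+1$ is fine. Hilbert--Burch works, and more simply, strict monotonicity gives $a_i\ge s-i$ and $b_i\ge i$, so $a_i+b_i\ge s$ for every generator. The lower bound is the real content; the paper gets it by appealing to the Newton-polygon description or the two-dimensional theory. Here your proposal has two genuine gaps.

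First, your witness for the unit-gap claim is wrong. Put $\alpha=a_{i-1}-a_i\ge 2$ and $\beta=b_i-b_{i-1}\ge 2$. Your own computation shows that $(a_i+1,b_{i-1}+1)$ lies on or above the segment joining the two corners iff $1+\beta/\alpha\ge\beta$, i.e.\ $(\alpha-1)(\beta-1)\le 1$. That only happens for $\alpha=\beta=2$. For $I=(x^3,y^3)$ your witness is $xy\notin\overline{I}$, so the argument fails to detect non-closedness. You shifted the corner of the missing rectangle nearest the origin; you need the opposite corner, the notch $w=x^{a_{i-1}-1}y^{b_i-1}$. It is not in $I$, and $w^{\alpha}$ is divisible by $(x^{a_{i-1}}y^{b_{i-1}})^{\alpha-1}x^{a_i}y^{b_i}$. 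Indeed the $x$-exponents coincide and the $y$-exponents differ by $(\alpha-1)\beta-\alpha\ge\alpha-2\ge 0$. So $w\in\overline{I}\setminus I$, and the claim $\min(\alpha_i,\beta_i)=1$ does hold.

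Second, the unit-gap claim alone does not give $m\le s$, and the step you postpone is exactly where the work lies. As written the switch is backwards: if the steps before $j$ have $\alpha_i=1$, you get $a_j=a_0-j$, not $a_j\le s-j$. You need an index $j$ with $\beta_i=1$ for $i\le j$ and $\alpha_i=1$ for $i>j$ (shallow steps near the $x$-axis, steep ones near the $y$-axis). Then $b_j=j$, $a_j=s-j$, and $m\le s$. Existence of $j$ means no step with $\beta_i\ge 2$ precedes a step with $\alpha_{i'}\ge 2$. This is not a formal consequence of convexity, since the generators are lattice points of the polyhedron, not its vertices. Choosing $j$ to minimize $a_j+b_j$ does not help either: minimality only constrains partial sums of $\beta_i-\alpha_i$. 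A step $(\alpha,\beta)=(1,3)$ followed by $(2,1)$ after $j$ is compatible with it. Your remark that $a_i+b_i$ changes by at most $1$ between consecutive generators is also false.

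The missing argument is short. Take an offending pair with only $(1,1)$-steps between them, say $k$ of them, and set $N=k+1+\alpha_{i'}$. Then $w=x^{a_{i-1}-1}y^{b_i-1}\notin I$ satisfies $w^N\in(x^{a_{i-1}}y^{b_{i-1}})^{N-1}x^{a_{i'}}y^{b_{i'}}A\subseteq I^N$. The $x$-exponents agree, and the $y$-surplus is $k(\beta_i-2)+\alpha_{i'}(\beta_i-1)-2\ge 0$. With both repairs, your staircase argument becomes a complete proof that is more elementary than the paper's citation.
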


\begin{proof}
This follows immediately from the description of integrally closed monomial
ideals in two variables via Newton polyhedra.

Alternatively, we may localize $A$ at $\fkm = (x,y)$ and apply the general
theory of integrally closed ideals in two-dimensional regular local rings
(e.g.\ \cite[Chapter~14]{SH}).
\end{proof}

The classification of integrally closed monomial ideals in two variables
can be obtained directly from the Newton polyhedron.
In particular, integral closures of monomial ideals are also monomial ideals.
We record below the explicit forms needed for later arguments.
Although the proofs are not exhaustive, we include enough details to clarify how the numerical conditions naturally arise.

\begin{proposition}\label{prop:2-variables}
Let $I$ be a monomial ideal in $A=k[x,y]$ with $\height_AI = 2$. 
Then we have the following.

\begin{enumerate}
	\item When $\mu_A(I) = 3$, $I$ is integrally closed if and only if, after switching $x$ and $y$ if necessary, we have one of the following.
	\begin{enumerate}
		\item $I = (x^a, y^b, xy)$ where $a, b \geq 3$.
		\item $I = (x^2, y^b, xy^{b_1})$ where $b \geq 2$ and $1 \leq b_1 \leq \lceil \frac{b}{2} \rceil$.
	\end{enumerate}

	\item When $\mu_A(I) = 4$, $I$ is integrally closed if and only if, after switching $x$ and $y$ if necessary, we have one of the following.
	\begin{enumerate}
		\item $I = (x^a, y^b, x^2y, xy^{b_2})$ where $a, b \geq 4$ and $2 \leq b_2 \leq \lceil \frac{b+1}{2} \rceil$.
		\item $I = (x^3, y^b, x^2y^{b_1}, xy^{b_2})$ where $b \geq 3$, $1 \leq b_1 < b_2 < b$, $1 \leq b_1 \leq \lceil \frac{b}{3} \rceil$, and $2b_1 - 1 \leq b_2 \leq \lceil \frac{b + b_1}{2} \rceil$.
		\end{enumerate}
	\end{enumerate}
\end{proposition}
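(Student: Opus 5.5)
We work throughout with the Newton polygon. A monomial ideal $I\subseteq k[x,y]$ of height $2$ is integrally closed if and only if every lattice point lying in the convex hull of $\{(a,b): x^ay^b\in I\}+\mathbb{R}_{\ge 0}^2$ already belongs to $I$. By Lemma~\ref{lem:order_mingens}, $\mu_A(I)=\ord(I)+1$. So $\mu_A(I)=3$ means $\ord(I)=2$, and $\mu_A(I)=4$ means $\ord(I)=3$.

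Write the minimal generators as $x^{a_0}=x^a$, $x^{a_1}y^{b_1},\dots,y^{b}$ with the $x$-exponents strictly decreasing and the $y$-exponents strictly increasing. The key observation is that an integrally closed ideal of order $n$ with $n+1$ generators has $x$-exponents $n,n-1,\dots,1,0$ except possibly at the ends. More precisely, the generators of degree pattern are forced: the monomial realizing the order together with $\mu=n+1$ forces the exponent sequence. I would argue this by counting. Any monomial ideal of order $n$ contains in its integral closure monomials $x^iy^{j_i}$ for every $0\le i\le n$, because the Newton polygon meets each vertical line $x=i$. Hence integral closedness plus $\mu=n+1$ pins the generators to exactly one per $x$-exponent $i=0,\dots,n$, with the first one replaced by $x^a$ for some $a\ge n$.

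For $\mu=3$, after swapping $x$ and $y$ we have $I=(x^a,x y^{b_1},y^b)$ with $a\ge 2$.
\begin{itemize}
\item If $b_1=1$ and the order is attained at $xy$, then $a,b\ge 2$. The condition $a,b\ge 3$ versus $a=2$ separates cases (a) and (b); the case $a=2$, $b_1=1$ falls under (b).
\item If $b_1\ge 2$, the order must be attained at $x^2$, so $a=2$.
\end{itemize}
Integral closedness is then the single midpoint condition: $xy^{\lceil b/2\rceil}$ lies on or above the segment joining $(2,0)$ and $(0,b)$. This forces $b_1\le\lceil b/2\rceil$. Conversely, under these inequalities no lattice point in the hull is missing. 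In case (a) the hull's only interior lattice points beyond the generators are multiples of $xy$.

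For $\mu=4$ we have generators with $x$-exponents $a\ge 3,2,1,0$.
\begin{itemize}
\item \emph{Case (a), order attained at $x^2y$.} This forces $b_1=1$ and $a,b\ge 4$ (if $a=3$ we are in (b), and symmetrically in $b$). The remaining condition concerns $xy^{b_2}$ against the segment from $(2,1)$ to $(0,b)$, giving $b_2\le\lceil (b+1)/2\rceil$; the lower bound $b_2\ge 2$ is minimality.
\item \emph{Case (b), $a=3$.} The conditions are convexity tests of each point against the hull of the others:
\begin{itemize}
\item $x^2y^{b_1}$ against the segment $(3,0)$--$(0,b)$ gives $b_1\le\lceil b/3\rceil$;
\item $xy^{b_2}$ against $(2,b_1)$--$(0,b)$ gives $b_2\le\lceil (b+b_1)/2\rceil$;
\item requiring that $x^2y^{b_1}$ not be forced by $(3,0)$ and $(1,b_2)$ gives $b_1\ge\lceil b_2/2\rceil$, that is, $2b_1-1\le b_2$ after rearrangement. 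This is the condition relating $b_1$ and $b_2$ via the midpoint of $(3,0),(1,b_2)$, which is $(2,b_2/2)$.
\end{itemize}
\end{itemize}

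The main obstacle is sufficiency: verifying that these finitely many "edge" tests imply that all lattice points of the hull lie in $I$. I would handle it by noting that the Newton polygon's boundary is the lower convex envelope of the listed points. A lattice point $(i,j)$ with $0\le i\le n$ lies in the hull if and only if $j\ge \ell(i)$, where $\ell$ is the piecewise-linear envelope. It therefore suffices to check that for each $i$, the generator's exponent $b_{n-i}$ equals $\lceil \ell(i)\rceil$. Checking this at the three or four abscissae, using $a\le b\lceil a/b\rceil\le a+b-1$, is exactly the list of inequalities above. Points with $i\ge n$ are covered by $x^a$ together with the order condition.
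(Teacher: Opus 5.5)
Your overall route is the paper's: Lemma~\ref{lem:order_mingens} fixes the shape of the generators, each bound comes from a midpoint/segment test on the Newton polygon, and sufficiency is a lower-envelope check. Your reduction of sufficiency is more explicit than the paper's omitted inspection: at each integer abscissa $i$ you compare the least $y$-exponent in $I$ with $\lceil\ell(i)\rceil$.

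The step that fails as written is your ``key observation.'' As a general claim it is false. $\overline{(x^{10},x^2y^2,y^{10})}=(x^{10},x^6y,x^2y^2,xy^6,y^{10})$ is integrally closed with $\ord=4$ and $\mu=5$, yet its $x$-exponents are $10,6,2,1,0$ in either variable order. Even for order $2$ a swap is needed: $\overline{(x^5,y^2)}=(x^5,x^3y,y^2)$. Your justification, that each vertical line $x=i$ meets the Newton polygon, holds for every height-$2$ monomial ideal and says nothing about which $x$-exponents occur among minimal generators. The argument that works is the one you only gesture at. Write the minimal generators as $x^{a_k}y^{b_k}$ ($0\le k\le m$), with $a_k$ strictly decreasing to $0$ and $b_k$ strictly increasing from $0$. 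Then $a_k\ge m-k$ and $b_k\ge k$, so $a_k+b_k\ge m=\mu-1=\ord(I)$. Equality at an order-attaining index $k_0$ forces $a_k=m-k$ for all $k\ge k_0$ and $b_k=k$ for all $k\le k_0$. For $m\le 3$, swapping $x$ and $y$ gives $k_0\le 1$. This yields exactly your normal forms $(x^a,xy^{b_1},y^b)$ and $(x^a,x^2y^{b_1},xy^{b_2},y^b)$, with $b_1=1$ unless $a=m$. The paper does this case by case, and you need it in place of the counting claim.

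Second, the test at $x=2$ in case (b) is stated backwards. Since $(x^2y^{\lceil b_2/2\rceil})^2$ is divisible by $x^3\cdot xy^{b_2}$, the monomial $x^2y^{\lceil b_2/2\rceil}$ lies in $\overline{I}$. Integral closedness therefore requires $b_1\le\lceil b_2/2\rceil$, which is equivalent to $2b_1-1\le b_2$. Your inequality $b_1\ge\lceil b_2/2\rceil$ is equivalent to $b_2\le 2b_1$ instead, and it would wrongly exclude the integrally closed ideal $(x^3,x^2y,xy^3,y^5)$. Nor is the relevant condition that $x^2y^{b_1}$ be ``not forced'': in $(x,y)^3$ one has $(x^2y)^2=x^3\cdot xy^2$. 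Your final inequality is right, but its derivation must be corrected.

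Finally, for sufficiency in case (b), the envelope at $x=1$ is $\min\{b_2,(b+b_1)/2,2b/3\}$, so you also need $b_2\le\lceil 2b/3\rceil$. This follows from $b_1\le\lceil b/3\rceil$ and $b_2\le\lceil (b+b_1)/2\rceil$, but it is not literally on your list.
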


\begin{proof}
We give a proof of the necessity of the numerical conditions for integral closedness.
Throughout the proof, we repeatedly use the following observation:
if there exists a monomial $m \notin I$ such that $m^2 \in I^2$,
then $m \in \overline{I} \setminus I$, contradicting the assumption $I=\overline{I}$.

\medskip

(1) Suppose that $I=(x^a, y^b, x^{a_1}y^{b_1})$ is integrally closed, where
$1 \le a_1 < a$ and $1 \le b_1 < b$.
In particular, we always have $a,b \ge 2$.

If $a,b \ge 3$, then Lemma~\ref{lem:order_mingens} implies $a_1=b_1=1$.
Hence we may assume $\min\{a,b\} = 2$. Then, without loss of generality, we also may assume $a=2$, so $a_1=1$.
If $b_1 > \ceil{\frac{b}{2}}$, then $xy^{b_1-1} \notin I$ while
$(xy^{b_1-1})^2 = x^2 y^{2b_1-2} \in I^2$, yielding a contradiction.
Therefore, the inequality
$
b_1 \le \ceil{\frac{b}{2}}
$
is necessary in this case.

\medskip

(2) Suppose that
$I=(x^a, x^{a_1}y^{b_1}, x^{a_2}y^{b_2}, y^b)$ is integrally closed,
where $1 \le a_2 < a_1 < a$ and $1 \le b_1 < b_2 < b$.

If $a,b \ge 4$, then Lemma~\ref{lem:order_mingens} implies
$a_1+b_1=3$ or $a_2+b_2=3$.
Without loss of generality, we may assume $a_1=2$ and $b_1=1$, hence $a_2=1$.
If $b_2 > \ceil{\frac{b+1}{2}}$, then $xy^{b_2-1} \notin I$ while
$(xy^{b_2-1})^2 \in I^2$, yielding a contradiction.
Thus we obtain
$b_2 \le \ceil{\frac{b+1}{2}}$.

Next, suppose $\min\{a,b\}=3$.
We may assume $a=3$, so $a_1=2$ and $a_2=1$.
If $b_1 > \ceil{\frac{b}{3}}$, then $x^2y^{b_1-1} \in \overline{I}\setminus I$,
yielding a contradiction.
Hence $b_1 \le \ceil{\frac{b}{3}}$

If $b_2 \le 2b_1-2$, then $\ceil{\frac{b_2}{2}} \le b_1-1$,
and $x^2 y^{\ceil{\frac{b_2}{2}}} \in \overline{I}\setminus I$,
again a contradiction.
Therefore we must have $b_2 \ge 2b_1-1$.

Finally, we get the bound $b_2 \le \ceil{\frac{b+b_1}{2}}$ by considering the monomial $xy^{b_2-1}$.

This completes the proof of the necessity of the conditions in (1) and (2).

\medskip

The sufficiency follows from the fact that, under these conditions,
the Newton polyhedron of $I$ contains no interior lattice points beyond the generators.
This can be verified by a direct inspection of the corresponding Newton polyhedra.
For illustration, see Figures~\ref{fig:three-configs}, which depict typical configurations for the case (2).
In these figures, the shaded region represents the set
$\overline{I} \setminus I$ in the exponent lattice.
Thus, $I$ is integrally closed if and only if this region
contains no lattice points.
These figures are meant only to indicate the geometric mechanism.
A complete verification of all cases would be routine and is omitted.
\end{proof}

\begin{figure}[h]
\centering
\begin{subfigure}[h]{0.23\textwidth}
\centering
\begin{tikzpicture}[scale=0.48]
  \def\a{5.0}   
  \def\b{8.0}   

  \draw[thick] (0,\b) -- (2,1);
  \draw[thick] (2, 1) -- (\a,0);

  \draw[->] (-0.3,0) -- (\a+0.6,0);
  \draw[->] (0,-0.3) -- (0,\b+0.6);

  \draw[dashed] (2,0) -- (2,1);
  \draw[dashed] (0,1) -- (2,1);

  \node[left]  at (0,\b) {$b$};
  \node[left]  at (0,1) {1};
  \node[below] at (2,0) {$2$};
  \node[below] at (\a,0) {$a$};

  \draw[very thick, gray!25!black]
    (0,\b) -- (1,\b)
    -- (1,5)
    -- (2,5)
    -- (2,1)
    -- (\a,1)
    -- (\a,0);

\fill[gray!25]
  (0,\b)
  -- (1,\b)
  -- (1,5)
  -- (2,5)
  -- (2,1)
  -- (\a,1)
  -- (\a,0)
  -- (2,1)
  -- (0,\b)
  -- cycle;
\end{tikzpicture}
\caption{\tiny $a,b\ge4$, $b_2 = \ceil{\frac{b+1}{2}}$}
\label{fig:case-i}
\end{subfigure}
\hfill
\begin{subfigure}[h]{0.23\textwidth}
\centering
\begin{tikzpicture}[scale=0.48]
  \def\a{5.0}   
  \def\b{8.0}   

  \draw[thick] (0,\b) -- (1,3);
  \draw[thick] (1,3) -- (2,1);
  \draw[thick] (2, 1) -- (\a,0);

  \draw[->] (-0.3,0) -- (\a+0.6,0);
  \draw[->] (0,-0.3) -- (0,\b+0.6);

  \draw[dashed] (2,0) -- (2,1);
  \draw[dashed] (0,1) -- (2,1);

  \node[left]  at (0,\b) {$b$};
  \node[left]  at (0,1) {1};
  \node[below] at (2,0) {$2$};
  \node[below] at (\a,0) {$a$};

  \draw[very thick, gray!25!black]
    (0,\b) -- (1,\b)
    -- (1,3)
    -- (2,3)
    -- (2,1)
    -- (\a,1)
    -- (\a,0);

\fill[gray!25]
  (0,\b)
  -- (1,\b)
  -- (1,3)
  -- (2,3)
  -- (2,1)
  -- (\a,1)
  -- (\a,0)
  -- (2,1)
  -- (1,3)
  -- (0,\b)
  -- cycle;
\end{tikzpicture}
\caption{\tiny $a,b\ge4$, $b_2 < \ceil{\frac{b+1}{2}}$}
\label{fig:case-i}
\end{subfigure}
\begin{subfigure}[h]{0.23\textwidth}
\centering
\begin{tikzpicture}[scale=0.48]
  \def\a{3.0}   
  \def\b{7.0}   

  \draw[thick] (0,\b) -- (1,3);
  \draw[thick] (1, 3) -- (\a,0);

  \draw[->] (-0.3,0) -- (\a+0.6,0);
  \draw[->] (0,-0.3) -- (0,\b+0.6);


  \node[left]  at (0,\b) {$b$};
  \node[below] at (\a,0) {$3$};

  \draw[very thick, gray!25!black]
    (0,\b) -- (1,\b)
    -- (1,3)
    -- (2,3)
    -- (2,2)
    -- (\a,2)
    -- (\a,0);

\fill[gray!25]
  (0,\b)
  -- (1,\b)
  -- (1,3)
  -- (2,3)
  -- (2,2)
  -- (\a,2)
  -- (\a,0)
  -- (1,3)
  -- (0,\b)
  -- cycle;
\end{tikzpicture}
\caption{\tiny $a=3$, $b_1 = \ceil{\frac{b}{3}}$, $b_2 =2b_1-1$}
\label{fig:case-ii}
\end{subfigure}
\hfill
\begin{subfigure}[h]{0.23\textwidth}
\centering
\begin{tikzpicture}[scale=0.48]
  \def\a{3.0}   
  \def\b{8.0}   

  \draw[->] (-0.3,0) -- (\a+0.6,0);
  \draw[->] (0,-0.3) -- (0,\b+0.6);


  \node[left]  at (0,\b) {$b$};
  \node[below] at (\a,0) {$3$};

  \draw[very thick, gray!25!black]
    (0,\b) -- (1,\b)
    -- (1,3)
    -- (2,3)
    -- (2,1)
    -- (\a,1)
    -- (\a,0);

\fill[gray!25]
    (0,\b) -- (1,\b)
    -- (1,3)
    -- (2,3)
    -- (2,1)
  -- (\a,1)
  -- (\a,0)
  -- (2,1)
  -- (1,3)
  -- (0,\b)
  -- cycle;

  \draw[thick] (0,\b) -- (1,3);
  \draw[thick] (1, 3) -- (2,1);
  \draw[thick] (2, 1) -- (\a,0);

\end{tikzpicture}
\caption{\tiny $a =3$, $b_1 < \ceil{\frac{b}{3}}$, $b_2 < \ceil{\frac{b+b_1}{2}}$}
\label{fig:case-iii}
\end{subfigure}

\caption{Typical Newton polyhedra in two variables with $\mu_A(I) = 4$}
\label{fig:three-configs}
\end{figure}
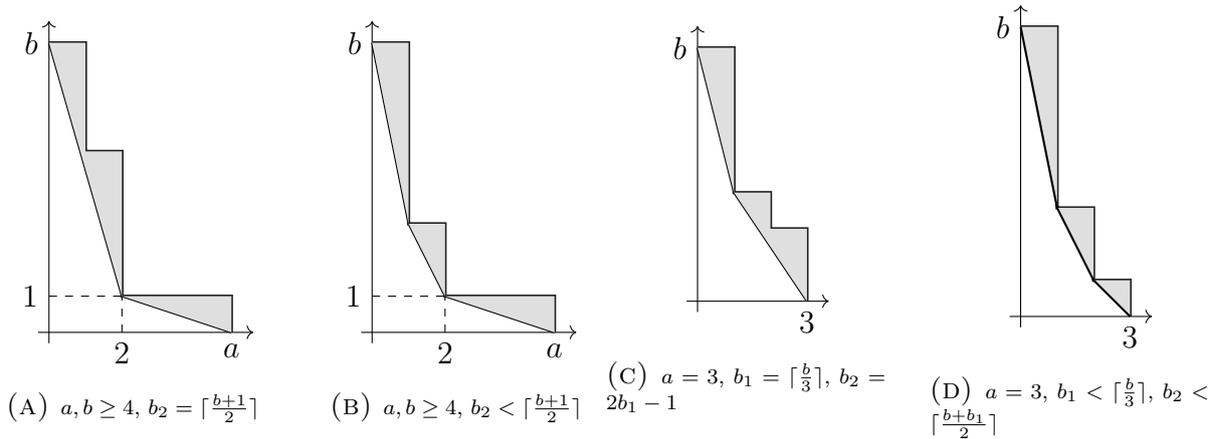

\begin{remark}
While the classification of integrally closed monomial ideals in two variables, as mentioned above, is relatively straightforward via Newton polyhedra, the situation changes drastically when moving to three variables. 
As demonstrated by Example~\ref{ex:counterexample}, nontrivial obstructions to normality can occur starting from dimension three. 

Although the classification of integrally closed monomial ideals with 6 or 7 generators can still be carried out explicitly (see Subsections \ref{sec:6gens} and \ref{sec:7gens}), verifying that $I^2$ is integrally closed becomes significantly more challenging. 
The main difficulty in our proof lies precisely in the exhaustive case-by-case verification required to show that $I^2$ is integrally closed for each configuration.
\end{remark}

\section{Main Results}\label{sec:main_results}

The purpose of this section is to prove the following theorem, which coincides with Main Theorem stated in Section~1.

\begin{thm}\label{maintheorem}
	Let $k$ be a field and $A=k[x,y,z]$ be the polynomial ring in three variables.
	Let $I$ be an integrally closed monomial ideal with $\height_AI = 3$. 
	If $\mu_A(I) \le 7$, then $I$ is normal.
\end{thm}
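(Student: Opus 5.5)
By Theorem~\ref{thm:RRV} with $d=3$, it suffices to show that $I^2$ is integrally closed. The plan is to classify the possible shapes of $I$ and then check $\overline{I^2}=I^2$ monomial by monomial in each shape. The check uses the valuative description: a monomial $x^\alpha y^\beta z^\gamma$ lies in $\overline{I^2}$ if and only if $v(x^\alpha y^\beta z^\gamma)\ge 2\min_{g}v(g)$ for every monomial valuation $v$ given by a supporting hyperplane of the Newton polyhedron of $I$. Here $g$ runs over the minimal generators of $I$.

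First I would set up the reduction to two variables. Since $\height_A I=3$, $I$ contains pure powers $x^a,y^b,z^c$. Put $I_z=(I+(z))/(z)\subseteq k[x,y]$, and define $I_x$, $I_y$ similarly. These restrictions are integrally closed monomial ideals of height $2$, because restricting the Newton polyhedron to a coordinate face preserves lattice-point closure. Every generator of $I$ lies either in one of these coordinate planes or strictly inside, that is, divisible by $xyz$.

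Since $\mu(I_z)=\ord(I_z)+1\ge 2$ by Lemma~\ref{lem:order_mingens}, and similarly for $I_x$ and $I_y$, counting the pure powers once gives
\[
\mu(I)\ge \mu(I_x)+\mu(I_y)+\mu(I_z)-3+\#\{\text{interior generators}\}.
\]
When $\mu(I)\le 7$, this leaves only a few possibilities. In the first, one restriction is generated by two elements, so $I\ni xy$, say, or has the form $(x,\dots)$. In the second, all restrictions have $\mu=3$ and there is at most one interior generator $xyz$. In the third, the restrictions have $\mu$'s $3,3,4$ or smaller, and there are no interior generators. I would enumerate these configurations explicitly using Proposition~\ref{prop:2-variables}.

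There are also structural constraints coming from integral closedness of $I$ itself. For instance, if $\ord$ of all three restrictions is at least $2$, then $xyz\in\overline{I}$ often forces $xyz\in I$; and if $x$ appears to the first power in all of $I_y$ and $I_z$, then $I$ contains $xy$ and $xz$. These constraints would cut the list down to a manageable family.

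In each case I would then prove $\overline{I^2}=I^2$. The first tool is a splitting argument. Write a candidate monomial $m\in\overline{I^2}$ as $m=x^\alpha m'$. If $I=xJ+(y,z)$-type pieces decompose, for example $I=(x)\cap\cdots$ or $I=xI'+L$ with $L\subseteq k[y,z]$, then the problem reduces to products of integrally closed ideals in two variables, which are integrally closed by Theorem~\ref{zariski}. The cases with $\mu\le 5$ fall to this quickly, and I expect that many $6$-generator cases do too, because a generator $xy$ or $x^2$ produces such a decomposition.

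For the genuinely three-dimensional configurations, such as $I=(x^a,y^b,z^c,xy,yz,zx,\ldots)$ with restrictions of order $2$, I would take $m\in\overline{I^2}\setminus I^2$ of minimal degree. I would then use the face valuations to locate $m$ in a bounded region, namely within the degree-$2$ dilation of a single compact face. There I would exhibit $m$ as a product of two generators, or as a multiple of one, in each face.

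The main obstacle is the seven-generator case with one restriction of $\mu=4$ of type (2)(b). This is $x^3$-type with parameters $b_1,b_2$, glued to two restrictions with $\mu=3$ of type $(x^2,y^b,xy^{b_1})$. The Newton polyhedron then has non-simplicial compact faces whose slopes depend on the ceilings $\ceil{b/3}$ and $\ceil{(b+b_1)/2}$. There I would parametrize each face, and show that the lattice points of $2P$ near that face are sums of two lattice points of $P$. To do this I would check the integer decomposition property facet by facet, using that each facet is a lattice triangle or trapezoid of lattice width $1$. The absence of width-$2$ facets is exactly what distinguishes these configurations from $\overline{(x^7,y^3,z^2)}$.
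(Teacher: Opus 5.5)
Your reduction to $\overline{I^2}=I^2$ via Theorem~\ref{thm:RRV} and your generator count through the three coordinate restrictions match the paper's set-up. The gap is in the step meant to prove $\overline{I^2}=I^2$ in the genuinely three-dimensional cases. Checking the integer decomposition property of each compact facet $F$ controls only the lattice points lying on $2F$ itself. It says nothing about lattice points of $2P$ lying strictly above the affine plane of $2F$, and that is exactly where normality fails in dimension three. Your own benchmark shows this. For $I=\overline{(x^7,y^3,z^2)}$, the only compact facet is the triangle $T$ with vertices $(7,0,0),(0,3,0),(0,0,2)$ on the plane $6u+14v+21w=42$. Its only lattice points are its vertices, so $T$ is a unimodular triangle of lattice width $1$ and trivially has the integer decomposition property. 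Nevertheless $x^6y^2z\in\overline{I^2}\setminus I^2$: since $6\cdot 6+14\cdot 2+21\cdot 1=85>84$, the point $(6,2,1)$ lies in $2P$, one lattice layer above $2T$. So your claim that the absence of width-$2$ facets separates your configurations from this example is false, and facet-wise integer decomposition cannot certify $\overline{I^2}=I^2$.

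The splitting argument has a related defect. The decomposition $I=x(I:x)+(I\cap k[y,z])$ holds for every monomial ideal, and $(I:x)$ is still a three-variable ideal. Moreover, the $x^\alpha$-part of $\overline{I^2}$ is cut out by the real slice $2P_{\alpha/2}$, where $P_s=\{(v,w) : (s,v,w)\in P\}$. This slice can be larger than what the products $J_iJ_j$ of the lattice slices $J_i=\{h\in k[y,z] : x^ih\in I\}$ provide. In the same example, $P_3$ has non-lattice vertices $(12/7,0)$ and $(0,8/7)$, and $y^2z$ lies in $2P_3$ but not in $\sum_{i+j=6}J_iJ_j$. So, as stated, the splitting reduces to Theorem~\ref{zariski} only when a variable lies in $I$, or for monomials not divisible by $xyz$, which can be projected to a coordinate plane.

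What you need instead is a mechanism that sees every lattice point of $2P$ near the boundary. The paper works as follows:
\begin{enumerate}
\item It reduces to the finitely many monomials $g\in(I^2:\mathfrak{m})\setminus I^2$. Any monomial $m\in\overline{I^2}\setminus I^2$ has a monomial multiple in this set, and that multiple is still in $\overline{I^2}$.
\item It handles those $g$ outside $(xyz)$ by projection and Theorem~\ref{zariski}.
\item It lists the remaining $g$ explicitly in each case of the classification.
\item It separates each such $g$ by an explicit weight $\lambda$ with $\ord_\lambda(g)<2\ord_\lambda(I)$.
\end{enumerate}
The classification in step 3 also needs the constraint that $xz\in I$ or $yz\in I$. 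This comes from $I=\overline{I}$ itself, not from the restrictions. Note also a slip in your count: $\mu(I_z)=2$ means a variable lies in $I$, not $xy$. Your idea of writing each such monomial near a face as a product of two generators could replace the weights, but only once it is carried out for all these socle monomials. That case-by-case work is exactly what the proposal leaves undone.
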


We prove Theorem~\ref{maintheorem} by considering the cases
$\mu_A(I)\le 5$, $\mu_A(I)=6$, and $\mu_A(I)=7$ separately,
which are treated in Subsections 3.1, 3.3, and 3.4, respectively.
The cases $\mu_A(I)\le 5$ and $\mu_A(I)=6$ are already known in the literature;
the latter follows from the result of Endo-Goto-Hong-Ulrich \cite{EndoHongUlrich} as discussed in the introduction.
Nevertheless, we include complete proofs for these cases, since they illustrate the method used throughout the paper and serve as a preparation for the more involved case $\mu_A(I)=7$.

\subsection{The Case $\mu_A(I) \le 5$}

\begin{lemma}\label{lem:key}
Let $A = k[x_1, \ldots, x_d]$ and let $I$ be an integrally closed monomial ideal with $\height_AI = d$. 
For $1 \leq i < j \leq d$ with $x_i, x_j \notin I$, there exists a minimal generator of $I$ of the form $x_i^\ell x_j^m$ with $\ell, m \ge 1$.
\end{lemma}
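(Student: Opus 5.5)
Since $\height_A I = d$ and $I$ is monomial, some pure power of every variable lies in $I$. Let $a$ be the least integer with $x_i^a \in I$ and $b$ the least integer with $x_j^b \in I$. The hypothesis $x_i, x_j \notin I$ gives $a, b \ge 2$. The plan is to produce a mixed monomial $x_i^\ell x_j^m$ ($\ell,m\ge1$) lying in $I$ by integrality, and then pass to a minimal generator. The natural candidate is $w = x_i^{\ell} x_j^{m}$ with $\ell = \lceil a/2 \rceil$ and $m = \lceil b/2 \rceil$. Since $a,b\ge 2$, both exponents are at least $1$.

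First I check that $w \in \overline{I}$. We have $w^2 = x_i^{2\ell} x_j^{2m}$ with $2\ell \ge a$ and $2m \ge b$, so $w^2$ is divisible by $x_i^a x_j^b \in I^2$. Hence $w^2 \in I^2$, so $w$ satisfies $T^2 - w^2 = 0$ with $w^2 \in I^2$. Therefore $w$ is integral over $I$, and $w \in I$ because $I = \overline{I}$.

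Next I extract the minimal generator. Since $I$ is monomial, $w$ is divisible by some minimal monomial generator $u$ of $I$. Every variable dividing $u$ divides $w$, so $u = x_i^{\ell'} x_j^{m'}$ with $0 \le \ell' \le \ell$ and $0 \le m' \le m$. It remains to exclude $\ell' = 0$ and $m' = 0$. If $m' = 0$, then $u = x_i^{\ell'}$ with $\ell' \le \lceil a/2\rceil < a$ (as $a \ge 2$), which contradicts the minimality of $a$. Here $\ell' = 0$ is also impossible, since it would give $u = 1$ and then $I = A$. The case $\ell' = 0$ is excluded symmetrically using $b$. Hence $u = x_i^{\ell'}x_j^{m'}$ is the required minimal generator with $\ell', m' \ge 1$.

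There is no real obstacle. The only points needing care are the strict inequality $\lceil a/2\rceil < a$, which uses $a\ge 2$ and hence the hypothesis $x_i\notin I$, and the fact that a minimal generator dividing $w$ involves only $x_i$ and $x_j$.
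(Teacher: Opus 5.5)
Your proof is correct, but it takes a different route from the paper.

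The paper argues structurally. It restricts to $J = I \cap k[x_i,x_j]$, an integrally closed monomial ideal of height $2$ with $\ord(J)\ge 2$, and invokes Lemma~\ref{lem:order_mingens} ($\mu(J)=\ord(J)+1$) to get $\mu(J)\ge 3$. Since only one pure power of each variable can be a minimal generator, some minimal generator of $J$ is mixed, and a minimal generator of $J$ is automatically a minimal generator of $I$.

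You bypass the two-variable structure theory entirely. The square of $x_i^{\lceil a/2\rceil}x_j^{\lceil b/2\rceil}$ is divisible by $x_i^a x_j^b\in I^2$, so this monomial lies in $\overline{I}=I$. Any minimal generator dividing it is forced to be mixed because $\lceil a/2\rceil<a$ and $\lceil b/2\rceil<b$.

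Your argument uses only the definition of integral closure. It is the same squaring trick the paper uses in the proof of Proposition~\ref{prop:2-variables}. The paper instead leans on $\mu=\ord+1$, whose proof is only sketched there and ultimately rests on Newton polyhedra or the theory of integrally closed ideals in two-dimensional regular local rings.

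Your route also gives explicit bounds $\ell\le\lceil a/2\rceil$ and $m\le\lceil b/2\rceil$ on the mixed generator. These match the bounds that appear in Proposition~\ref{prop:2-variables}(1) and Theorem~\ref{thm:6gens}. The paper's route yields the stronger count $\mu(J)=\ord(J)+1$. That count is what is needed later, for instance to force $a_1=b_1=1$ when $a,b\ge 3$.
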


\begin{proof}
	Let $1 \le i < j \le d$ with $x_i, x_j \notin I$. 
	By renumbering the variables, we may assume $(i,j)= (1,2)$, so that $x_1, x_2 \notin I$. 
	We consider $J=I \cap k[x_1, x_2]$. Then $J$ is an integrally closed monomial ideal with order at least $2$.
	Therefore, thanks to Lemma \ref{lem:order_mingens}, $\mu_{k[x_1, x_2]}(J) = \ord(J) + 1 \ge 3$. 
	Thus we can find a minimal generator $f \in k[x_1, x_2]$ of $J$ which is neither a power of $x_1$ nor a power of $x_2$. This means that $f = x_1^\ell x_2^m$ for some $\ell, m \ge 1$. Since $f$ is a minimal generator of $J$, $f$ is also a minimal generator of $I$.
\end{proof}

The following is a consequence of Lemma~\ref{lem:key}.
Although the assertion below admits a higher-dimensional generalization,
we confine ourselves here to the three-variable case,
since the higher-dimensional case will be discussed in Section~\ref{sec:higher_dim}.

\begin{corollary}[cf. the much more general results in \cite{ C, EndoHongUlrich, G}]\label{cor:5gens}
Let $A = k[x,y,z]$ and $I$ be an integrally closed monomial ideal with $\height_AI = 3$. 
If $\mu_A(I) \le 5$, then $I$ is normal.
\end{corollary}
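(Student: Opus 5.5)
Since $\height_A I = 3$, the ideal contains pure powers $x^a, y^b, z^c$ with $a,b,c \ge 1$, which are minimal generators. The plan is a case split on how many variables lie in $I$, using Lemma~\ref{lem:key} to force generators, and in each case to reduce to a situation where normality is already known.

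\textbf{Counting generators.} Let $s$ be the number of variables among $x,y,z$ that do \emph{not} lie in $I$.

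If $s=3$, Lemma~\ref{lem:key} applied to the pairs $\{x,y\},\{x,z\},\{y,z\}$ gives three further minimal generators of the forms $x^\ell y^m$, $x^\ell z^n$, $y^m z^n$ with positive exponents. These are distinct from each other and from the pure powers, so $\mu_A(I) \ge 6$. Hence, when $\mu_A(I)\le 5$, at least one variable lies in $I$.

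\textbf{Reducing to two variables.} Suppose, say, $z \in I$. Then $I = J A + zA$ with $J = I \cap k[x,y]$ a monomial ideal of height $2$ in $k[x,y]$. The ideal $J$ is integrally closed, being a contraction of an integrally closed ideal; equivalently, the exponent set of $J$ is a slice of the Newton polyhedron of $I$. We would then show $I^n = \sum_{i} z^{i} J^{n-i} A$ is integrally closed for every $n$. Set $B = k[x,y]$ and view $A = B[z]$.

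Write a monomial $w = m z^{t}$ with $m \in B$, and suppose $w \in \overline{I^n}$. The integral closure of a monomial ideal is governed by its Newton polyhedron. The Newton polyhedron of $I^n$ is the Minkowski-type combination $\bigcup_{i} \bigl( (n-i)\,\mathrm{NP}(J) + i e_z \bigr)$ together with its convex hull. A point $(\alpha, t)$ lies in it iff $\alpha \in (n-t')\,\mathrm{NP}(J)$ for some real $t' \le t$, $0\le t'\le n$. This holds because the $z$-coordinate enters linearly, and scaling Newton polyhedra is compatible with convex combinations: $\lambda P + (1-\lambda)Q$ with $P = (n-i)N$, $Q=(n-j)N$ gives $(n-t')N$ for a convex $N$.

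Taking $i=\min(\lfloor t\rfloor, n)$, we get $m \in \overline{J^{\,n-i}} = J^{n-i}$ by Zariski's Theorem~\ref{zariski}. Here we use that the integral closedness of powers of $J$ in $B$ follows from the local statement at $(x,y)$, since monomial ideals are detected there. Hence $w \in z^i J^{n-i} \subseteq I^n$.

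Alternatively, it suffices by Theorem~\ref{thm:RRV} to treat $n=2$: show $I^2 = J^2 + zJ + z^2$ is integrally closed. This follows by the same slicing argument applied only to $J^2$, $J$, and $B$.

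\textbf{Main obstacle.} The delicate point is justifying that the Newton polyhedron of $I^n$ slices exactly as claimed. Concretely, for $0 \le t \le n$, the exponent vectors $\alpha$ with $(\alpha,t)$ in the convex hull must satisfy $\alpha \in (n-t)\,\mathrm{NP}(J) \subseteq (n-\lfloor t\rfloor)\,\mathrm{NP}(J)$. I would verify this by writing a convex combination of points $(\beta_k + \gamma_k, i_k)$, with $\beta_k \in (n-i_k)\mathrm{NP}(J)$ and $\gamma_k \ge 0$, and using that $\mathrm{NP}(J)$ is convex and closed under adding $\mathbb{R}_{\ge0}^2$. The case of two or three variables in $I$ is then simpler: $I$ becomes $(x,y,\ldots)$-type, and the same argument applies with $J$ principal or trivial.
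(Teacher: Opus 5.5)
Your proposal is correct and follows the paper's proof: Lemma~\ref{lem:key} forces $\mu_A(I)\ge 6$ when no variable lies in $I$, so some variable, say $z$, lies in $I$, and one reduces to the integrally closed ideal $J=I\cap k[x,y]\cong I/(z)$ and Zariski's theorem. Your Newton-polyhedron slicing argument (a monomial $mz^t$ lies in $\overline{I^n}$ only if $m\in\overline{J^{\,n-\min(t,n)}}$) spells out the lifting step that the paper only calls ``readily implies''. The closing remark about two or three variables lying in $I$ is unnecessary, since the case $z\in I$ already covers it.
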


\begin{proof}
	Suppose that $\mu_A(I) \le 5$. To prove the normality of $I$, we may assume $x,y,z \notin I$. 
	By Lemma~\ref{lem:key}, we can choose minimal generators $f_1 \in I \cap k[x, y]$, $f_2 \in I \cap k[x, z]$, and $f_3 \in I \cap k[y, z]$ of $I$.
	Together with the powers of $x, y, z$ appearing in a minimal generating set of $I$, this implies that $\mu_A(I) \ge 6$.
	Therefore, at least one of $x$, $y$, and $z$ is in $I$.
	Without loss of generality, we may assume $z \in I$.
	Then $I/(z)$ is an integrally closed monomial ideal in $A/(z) \cong k[x,y]$, and hence $I/(z)$ is normal.
	This readily implies that $I$ is normal.
\end{proof}

The proof above relies on the fact that, for monomial ideals,
the existence of generators of order one is automatically guaranteed by Lemma~\ref{lem:key}.
While reductions to lower-dimensional cases are present in \cite{C, EndoHongUlrich},
their validity in the general setting depends on delicate ring-theoretic arguments.

\subsection{Strategy for proving the main result in the cases $\mu_A(I)=6$ and $7$}

In this subsection, we explain the strategy for proving normality
in the cases $\mu_A(I) = 6$ and $\mu_A(I) = 7$.

Throughout this subsection, let $A=k[x,y,z]$, $\fkm = (x,y,z)$ and $I$ be a monomial ideal of $A$ with $\height_AI = 3$ (for the moment, not necessarily $6 \le \mu_A(I) \le 7$).
To prove $I = \overline{I}$, it suffices to show that $g \notin \overline{I}$ for any monomial $g \in (I:\fkm) \setminus I$.

In order to explain how to prove $g \notin \overline{I}$, we define the ring homomorphism $\varphi_\lambda: A \to k[[t]]$ such that $\varphi_\lambda(x) = t^p$, $\varphi_\lambda(y) = t^q$, $\varphi_\lambda(z) = t^r$ for $\lambda = (p,q,r) \in \mathbb{N}^3$.
Moreover, we define $\ord_\lambda(h) = \ord(\varphi_\lambda(h))$ for $h \in k[x,y,z]$ and $\ord_\lambda(I) = \min \{\ord_\lambda(h) \mid h \in I\}$,
where $\ord$ denotes the order in the formal power series ring $k[[t]]$.

\begin{proposition}\label{prop:ord_test}
Suppose that there exists $\lambda = (p,q,r) \in \mathbb{N}^3$ such that $\ord_\lambda (g) < \ord_\lambda(I)$. Then $g \notin \overline{I}$.
\end{proposition}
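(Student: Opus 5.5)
The plan is to argue by contradiction directly from the definition of integral dependence, pushing an equation of integral dependence forward along the ring homomorphism $\varphi_\lambda: A \to k[[t]]$. Suppose $g \in \overline{I}$. Then there is an equation
$$
g^\ell + a_1 g^{\ell-1} + \cdots + a_\ell = 0, \qquad a_i \in I^i .
$$
Applying $\varphi_\lambda$ gives the same relation in $k[[t]]$, with $\varphi_\lambda(a_i) \in \varphi_\lambda(I)^i$. Since $\varphi_\lambda$ sends monomials to monomials in $t$ and $\lambda \in \mathbb{N}^3$ has positive entries, $\varphi_\lambda(I)k[[t]]$ is the ideal $(t^{c})$ with $c=\ord_\lambda(I)$. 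Here $c$ is attained at a monomial generator of $I$, because the order of a polynomial under $\varphi_\lambda$ is at least the minimum over its monomials. Consequently $\ord(\varphi_\lambda(a_i)) \ge ic$ for every $i$.

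Next, set $e=\ord_\lambda(g) = \ord(\varphi_\lambda(g))$. Since $g$ is a monomial, $\varphi_\lambda(g)$ is a nonzero monomial $t^{e}$. Compare orders in the pushed-forward equation. The term $\varphi_\lambda(g)^\ell$ has order exactly $\ell e$. Each other term $\varphi_\lambda(a_i)\varphi_\lambda(g)^{\ell-i}$ with $i \ge 1$ has order at least $ic+(\ell-i)e$. The hypothesis $e<c$ makes this strictly larger than $\ell e$. So the lowest-order term $t^{\ell e}$ cannot be cancelled, and the sum is nonzero, a contradiction. Hence $g \notin \overline{I}$.

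Alternatively, I could cite the standard fact that $\ord$ on $k[[t]]$ is a discrete valuation and that integral closure is contained in the contraction of $\overline{IV}=IV$ for the valuation ring $V=k[[t]]$ (compare Lemma~\ref{dvr}). That route gives $\varphi_\lambda(g) \in (t^{c})$, which contradicts $e<c$.

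The only point needing care, and the main (mild) obstacle, is justifying $\ord(\varphi_\lambda(h)) \ge \ord_\lambda(I)$ for products and sums, and that $\varphi_\lambda(g)\neq 0$. Both follow because $\ord$ is a valuation and $p,q,r \ge 1$, so no monomial is sent to $0$. For a general polynomial $h$ possible cancellation only raises the order, which is harmless, since only lower bounds are used.
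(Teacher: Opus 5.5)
Your proof is correct and follows the same route as the paper: map along $\varphi_\lambda$ into the discrete valuation ring $k[[t]]$, where $\varphi_\lambda(I)k[[t]]=(t^{\ord_\lambda(I)})$, and conclude that $g\in\overline{I}$ would force $\ord_\lambda(g)\ge\ord_\lambda(I)$. The only difference is that your main argument proves two facts inline that the paper cites: you map the equation of integral dependence and compare orders, whereas the paper invokes persistence $\varphi_\lambda(\overline{I})k[[t]]\subseteq\overline{\varphi_\lambda(I)k[[t]]}$ and the integral closedness of ideals in a DVR (Lemma~\ref{dvr}); your ``alternative'' route is exactly the paper's proof.
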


\begin{proof}
We put $V=k[[t]]$.
Assume that $g \in \overline{I}$.
By Lemma \ref{dvr}, every ideal of $V$ is integrally closed.
In particular,
$$\varphi_\lambda(g) \in \varphi_\lambda(\overline{I})V \subseteq \overline{\varphi_\lambda(I)V} = \varphi_\lambda(I)V,$$
which is equivalent to $\ord_\lambda(g) \ge \ord_\lambda(I)$, since $\varphi_\lambda(I)V = (t^{\ord_\lambda(I)})$.
This contradicts our assumption.
\end{proof}

Put $I_1 = I \cap k[x,y]$, $I_2 = I \cap k[x,z]$, and $I_3 = I \cap k[y,z]$.
It is immediate that if $I$ is integrally closed, then so is $I_i$ for each $i=1,2,3$.
Conversely, under an additional assumption, the following holds.

\begin{lemma}\label{lem:xy}
Suppose that at least one of $xy, xz, yz$ is in $I$ and $I_i$ is integrally closed for every $i=1,2,3$. Then $I$ is integrally closed.
\end{lemma}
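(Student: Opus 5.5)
The plan is to use two facts: a monomial divisible by $xy$ is automatically in $I$, and the remaining monomials live in a two-variable subring where the hypotheses on $I_2$ and $I_3$ apply. By symmetry of the roles of $x,y,z$, we may assume $xy \in I$. The hypothesis on $I_1$ will then not be needed; the argument only uses that $I_2$ and $I_3$ are integrally closed.

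First we recall that the integral closure of a monomial ideal is again a monomial ideal (the Newton polyhedron description recalled in Section~\ref{sec:preliminaries}). So it suffices to show that every monomial $g = x^a y^b z^c \in \overline{I}$ lies in $I$. If $a \ge 1$ and $b \ge 1$, then $xy \mid g$ and $g \in (xy) \subseteq I$. Otherwise $b = 0$ or $a = 0$, so $g \in k[x,z]$ or $g \in k[y,z]$, and by symmetry it is enough to treat $g \in k[x,z]$.

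For that case we use the $k$-algebra retraction $\pi : A \to k[x,z]$ given by $x \mapsto x$, $y \mapsto 0$, $z \mapsto z$. Since $I$ is monomial, $\pi$ fixes each monomial generator of $I$ not involving $y$ and kills the others. Hence $\pi(I) = I \cap k[x,z] = I_2$, and more generally $\pi(I^i) \subseteq I_2^{\,i}$.

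Now apply $\pi$ to an equation of integral dependence $g^\ell + a_1 g^{\ell-1} + \cdots + a_\ell = 0$ with $a_i \in I^i$. Because $\pi(g) = g$, we obtain an equation of integral dependence of $g$ over $I_2$ in $k[x,z]$. Thus $g \in \overline{I_2} = I_2 \subseteq I$. The same argument with $x \mapsto 0$ handles $g \in k[y,z]$ via $I_3$.

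The only step needing care is the reduction to monomials, namely that $\overline{I}$ is monomial. This is standard (for instance, $\overline{I}$ is stable under the torus action), so I expect no real obstacle here.
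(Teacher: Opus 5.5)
Your proposal is correct and takes the same route as the paper. Both assume $xy\in I$, observe that any monomial of $\overline{I}$ not divisible by $xy$ lies in $k[x,z]$ or $k[y,z]$, and use the retraction killing the missing variable (persistence of integral dependence) to conclude it lies in $\overline{I_2}=I_2$ or $\overline{I_3}=I_3$. The only difference is that you explicitly justify that $\overline{I}$ is a monomial ideal, which the paper uses without comment.
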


\begin{proof}
	We may assume $xy \in I$.
	Suppose that $I \subsetneq \overline{I}$ and take any monomial $g \in \overline{I} \setminus I$.
	Since $xy \in I$, $g$ has the form $x^\alpha z^\gamma$ or $y^\beta z^\gamma$.
	Without loss of generality, we may assume $g = x^\alpha z^\gamma$.
	Consider the projection $\rho : A \to k[x,z]$, namely, $\rho(y) = 0$. 
	Then $g=\rho(g)$ and, since $I$ is a monomial ideal, we have $\rho(I)=I\cap k[x,z]=I_2$.
	By persistence, $\rho(\overline{I})\subseteq \overline{\rho(I)}=\overline{I_2}$, hence $g\in \overline{I_2}$.
	Therefore, since $I_2$ is integrally closed, we have $g \in I_2 \subseteq I$, a contradiction.
\end{proof}

In the rest of this section, we prove the normality of $I$ separately for the cases $\mu_A(I)=6$ and $\mu_A(I)=7$,
but the strategy is the same for both cases. We describe the procedure here.

First, we classify all integrally closed monomial ideals in each case.
The details will be given in each subsection, but the key tools are the fact that
$I_1, I_2, I_3$ are integrally closed if so is $I$, and Lemma \ref{lem:xy}.

From now on, suppose that $I$ is integrally closed and $6 \le \mu_A(I) \le 7$.
In order to prove the normality of $I$, by Theorem \ref{thm:RRV} it suffices to prove that $I^2$ is integrally closed.

Let $g \in (I^2:\fkm) \setminus I^2$ be a monomial. We will show that $g \notin \overline{I^2}$.

First, suppose that $g \notin (xyz)$.
Then $g \in k[x,y]$ or $g \in k[x,z]$ or $g \in k[y,z]$.
We may assume $g \in k[x, y]$.
Note that $I_1 = I \cap k[x,y]$ is an integrally closed monomial ideal in $k[x,y]$.
By Theorem \ref{zariski} $I_1$ is normal, in particular, $I_1^2$ is integrally closed.
By considering the projection $\rho:A \to k[x,y]$ as in the proof of Lemma \ref{lem:xy}, if we assume $g \in \overline{I^2}$, then we get $g \in \overline{I_1^2} = I_1^2 \subseteq I^2$ a contradiction.
Therefore, $g \notin \overline{I^2}$.

Thus, we may assume $g \in (xyz)$. We enumerate all such monomials $g$.
For each such $g$, we find $\lambda = (p,q,r) \in \mathbb{N}^3$ such that
$\ord_\lambda(g) < \ord_\lambda(I^2) = 2\ord_\lambda(I)$.
Then, by Proposition \ref{prop:ord_test}, we obtain $g \notin \overline{I^2}$, completing the proof.

\subsection{The Case $\mu_A(I) =6$} \label{sec:6gens}

\subsubsection{Classification of integrally closed monomial ideals}

Let $I$ be an integrally closed monomial ideal with $\mu_A(I)=6$ and $\height_AI = 3$.
Then there exist $a,b,c \in \mathbb{N}$ and monomials $f_1, f_2, f_3 \in A$ with 
$$I=(x^a, y^b, z^c, f_1, f_2, f_3).$$

If one of $a, b, c$ equals $1$ --- say, $c=1$ after relabeling variables if necessary --- then 
$I/(z)$ is an integrally closed monomial ideal in $A/(z) \cong k[x,y]$, 
and hence $I/(z)$ is normal by using Theorem \ref{zariski}, which implies $I$ is also normal.
Therefore, we may assume $a, b, c \ge 2$.

Set $A_1 = k[x,y]$, $A_2 = k[x,z]$, $A_3 = k[y,z]$, and $I_i = I \cap A_i$ for each $i=1,2,3$.
By Lemma \ref{lem:key}, we may assume $f_i \in A_i$ for each $i=1,2,3$.
We write $f_1 = x^{a_1} y^{b_1}$, $f_2 = x^{a_2}z^{c_2}$, and $f_3 = y^{b_3}z^{c_3}$ 
for some $a_1, a_2, b_1, b_3, c_2, c_3 \in \mathbb{N}$.
Furthermore, by using Proposition \ref{prop:2-variables}, we have the following classification.

\begin{thm}\label{thm:6gens}
	Let $a,b,c,a_1, a_2, b_1, b_3, c_2, c_3 \in \mathbb{N}$ with 
	$a, b, c \ge 2$, $1 \le a_1, a_2 < a$, $1 \le b_1, b_3 < b$, and $1 \le c_2, c_3 < c$.
	Put $$I=(x^a, y^b,z^c, x^{a_1} y^{b_1}, x^{a_2}z^{c_2}, y^{b_3}z^{c_3}).$$
	Then the following conditions are equivalent.
	
	\begin{enumerate}
		\item $I=\overline{I}$.
		\item $I_i = \overline{I_i}$ for every $i=1,2,3$.
		\item After permuting $x, y, z$ if necessary, one of the following is satisfied.
		\begin{enumerate}
		\item $a,b,c \ge 3$ and $a_1=a_2=b_1=b_3=c_2=c_3=1$.
		\item $a,b\ge 3$, $c=2$, $a_1=b_1=c_2=c_3=1$, $1 \le a_2 \le \ceil{\frac{a}{2}}$, 
		      and $1 \le b_3 \le \ceil{\frac{b}{2}}$.
		\item $b=c=2$, $b_1=b_3=c_2=c_3=1$, and $1 \le a_1, a_2 \le \ceil{\frac{a}{2}}$.
		\end{enumerate}
	\end{enumerate}
\end{thm}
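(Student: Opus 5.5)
The plan is to prove the cycle $(1)\Rightarrow(2)\Rightarrow(3)\Rightarrow(1)$.

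\emph{Step $(1)\Rightarrow(2)$.} Suppose $g\in\overline{I_i}$ is a monomial, say $g \in k[x,y]$ for $i=1$. Then $g\in\overline{I}=I$. Since $I$ is a monomial ideal, $I\cap k[x,y]=I_1$, so $g\in I_1$. The cases $i=2,3$ are identical, and this step is immediate.

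\emph{Step $(2)\Rightarrow(3)$.} Each $I_i$ is a three-generated height-two monomial ideal in two variables; for instance $I_1=(x^a,y^b,x^{a_1}y^{b_1})$. Indeed, the other generators of $I$ involve $z$ and contribute nothing to $I_1$. So Proposition~\ref{prop:2-variables}(1) applies to each $I_i$. It says that $I_i$ is integrally closed if and only if either both pure-power exponents are $\ge 3$ and the mixed generator is the product of the two variables, or one exponent equals $2$ with the bound $\lceil\cdot/2\rceil$ on the other mixed exponent. I will then split into cases according to how many of $a,b,c$ equal $2$.
\begin{itemize}
\item If none equal $2$, all three mixed generators are products of two variables, which gives (a).
\item If exactly one equals $2$, say $c=2$, then $I_1$ forces $a_1=b_1=1$. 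Also $I_2=(x^a,z^2,x^{a_2}z^{c_2})$ forces $c_2=1$ and $a_2\le\lceil a/2\rceil$, and similarly $I_3$ gives $c_3=1$ and $b_3\le\lceil b/2\rceil$. This is (b).
\item If exactly two equal $2$, say $b=c=2$, then $I_3=(y^2,z^2,y^{b_3}z^{c_3})$ forces $b_3=c_3=1$, and $I_1,I_2$ give $b_1=c_2=1$ with $a_1,a_2\le\lceil a/2\rceil$. This is (c).
\item If $a=b=c=2$, this is still (c): with $a=2$, the conditions $a_1,a_2\le 1$ are automatic.
\end{itemize}
Conversely, in each of (a)--(c) every $I_i$ satisfies the conditions of Proposition~\ref{prop:2-variables}(1). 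So I will establish the equivalence $(2)\Leftrightarrow(3)$ directly. The only care needed is bookkeeping over the permutations.

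\emph{Step $(3)\Rightarrow(1)$.} Given (3), we have $(2)$ by the converse just noted, so it suffices to show $(2)\Rightarrow(1)$. Here I will invoke Lemma~\ref{lem:xy}, which needs one of $xy,xz,yz$ to lie in $I$.
\begin{itemize}
\item In case (a), $xy\in I$ since $a_1=b_1=1$.
\item In case (b), also $a_1=b_1=1$, so again $xy\in I$.
\item In case (c), $b_3=c_3=1$, so $yz\in I$.
\end{itemize}
Together with the integral closedness of all $I_i$, Lemma~\ref{lem:xy} gives $I=\overline{I}$.

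The main obstacle is Step $(2)\Rightarrow(3)$. The difficulty is not depth but making sure the case analysis is exhaustive: each of the three two-variable restrictions must be matched to the correct variant of Proposition~\ref{prop:2-variables}(1), with the roles of $x$ and $y$ possibly switched there. The degenerate case $a=b=c=2$ must also be checked to fall under (c).
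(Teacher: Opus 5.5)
Your proposal is correct and follows essentially the same route as the paper. In both, $(1)\Rightarrow(2)$ is immediate, and $(2)\Leftrightarrow(3)$ comes from applying Proposition~\ref{prop:2-variables}(1) to each $I_i$; the paper orders $a\ge b\ge c$ where you count how many exponents equal $2$, which amounts to the same case split. Likewise, $(3)\Rightarrow(1)$ is obtained from Lemma~\ref{lem:xy} after noting that $xy\in I$ in cases (a) and (b) and $yz\in I$ in case (c).
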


\begin{proof}
Note that
$$
I_1 = (x^a, y^b, x^{a_1}y^{b_1})A_1,
\quad
I_2 = (x^a, z^c, x^{a_2}z^{c_2})A_2,
\quad
I_3 = (y^b, z^c, y^{b_3}z^{c_3})A_3,
$$
which are monomial ideals in two variables with three generators. The implication (1) $\Rightarrow$ (2) is trivial.

\textbf{(3) $\Rightarrow$ (1):} Suppose (3). Note that $xy \in I$ or $yz \in I$ in each case.
Moreover, by Proposition \ref{prop:2-variables}, $I_1, I_2, I_3$ are integrally closed.
Therefore, by Lemma \ref{lem:xy}, $I$ is integrally closed. Thus we have (1).

\textbf{(2) $\Rightarrow$ (3):} 
Suppose (2). 
By permuting $x,y,z$ if necessary, we may assume $a \ge b \ge c$.
If $c \ge 3$, we have $a_1 = a_2 = b_1 = b_3 = c_2= c_3= 1$ by Proposition \ref{prop:2-variables}.
Next, assume $c =2$ and $b \ge 3$. Since $c=2$, we have $c_2 = c_3=1$. Moreover, since $a \ge b \ge 3$, we have $a_1 = b_1 =1$ by Proposition \ref{prop:2-variables}. The remaining conditions also follow from Proposition \ref{prop:2-variables}.
Finally, assume $b=c=2$. Then $b_1 = b_3 = c_2 = c_3 =1$.
The ranges of $a_1$ and $a_2$ are given by Proposition \ref{prop:2-variables}.
\end{proof}

\subsubsection{Determining $(I^2:\fkm)\setminus I^2$}

Let $I$ be an integrally closed monomial ideal in $A=k[x,y,z]$ with $\height_AI = 3$ and $\mu_A(I) = 6$.
In this subsubsection, we explicitly determine the set
$$
\calX = \left\{g \in (I^2:\fkm)\setminus I^2 ~\middle|~ g = x^\alpha y^\beta z^\gamma~(\alpha, \beta, \gamma \in \mathbb{N}) \right\}.
$$
We use the notation in Theorem \ref{thm:6gens}. 


\bigskip

\noindent
\textbf{Case 1: $a_1=a_2=b_1=b_3=c_2=c_3=1$.}

In this case, $I=(x^a, y^b, z^c, xy,xz, yz)$.
It is straightforward to verify that $xyz \in (I^2:\fkm) \setminus I^2$, and thus $\calX = \{xyz\}$.

\medskip

\textit{Reduction Step.}
When $a,b,c \ge 3$, all cases are already covered by Case~1.
Hence, to treat the remaining cases, we may assume $c=2$ after permuting $a,b,c$ if necessary. Hence $c_2 = c_3 = 1$.

\bigskip

\noindent
\textbf{Case 2: $a,b \ge 3$, $c=2$, $c_2 = c_3 = 1$.}

In this case, we have $a_1 = b_1 = 1$ by Proposition \ref{prop:2-variables}.
Namely, $I=(x^a, y^b, z^2, xy, x^{a_2}z, y^{b_3}z)$.
If $a_2 = b_3 = 1$, this reduces to Case 1.
Therefore, we may assume $a_2 \ge 2$ or $b_3 \ge 2$.

We divide into two cases.

\medskip

\noindent
\underline{Subcase 2-a:} $a_2 = 1$.

In this case, we have $x^2yz, xyz^2 \in I^2$. 
Hence, if $g=x^\alpha y^\beta z^\gamma \in (xyz)$ satisfies $g \notin I^2$, then $\alpha = \gamma = 1$. Hence we need to consider $g = xy^\beta z$.

Since 
$$xy^{b_3}z \cdot x = x^2y^{b_3}z = (xy)^2y^{b_3-2}z \in I^2 \quad (\text{as } x^2y^2 \in I^2),$$
$$xy^{b_3}z \cdot y = xy^{b_3+1}z \in I^2, \text{ and }$$
$$xy^{b_3}z \cdot z = xy^{b_3}z^2 \in I^2 \quad (\text{as } xyz^2 \in I^2),$$
we have $xy^{b_3}z \in I^2:\fkm$.
Moreover, $xy^{b_3}z \notin I^2$ since it is not divisible by any minimal generator of $I^2$.
Thus, $xy^{b_3}z$ is the only monomial in $\calX$.

\medskip
\textit{Reduction Step.} 
Since the roles of $a$ and $b$ are symmetric, the case $b_3 = 1$ reduces to Subcase 2-a by symmetry.

\medskip

\noindent
\underline{Subcase 2-b:} $2 \le a_2 \le \ceil{\frac{a}{2}}$ and $2 \le b_3 \le \ceil{\frac{b}{2}}$.

Let $g=x^\alpha y^\beta z^\gamma \in (xyz)$ so that $g \notin I^2$.
Since $xyz^2 \in I^2$, we have $\gamma = 1$. Furthermore, $x^2y^2 \in I^2$ implies $\alpha = 1$ or $\beta =1$.
Therefore, we need to consider the two cases: $g = x^\alpha yz$ and $g = xy^\beta z$.
Then, by a computation similar to Subcase 2-a, we can conclude that $\calX = \{x^{a_2}yz, xy^{b_3}z\}$. 

This completes the analysis of Case 2.

\medskip

\textit{Reduction Step.}
Since the case of $a,b \ge 3$ is completed, it remains to consider the case when $a = 2$ or $b = 2$.
By swapping $a$ and $b$ if necessary, we may assume $b = 2$. Hence $b_1 = b_3 = 1$.
Moreover, if $a = 2$, this reduces to Case 1.
Therefore, we may assume $a \ge 3$.

\bigskip

\noindent
\textbf{Case 3: $a \ge 3$, $b=c=2$, $b_1 = b_3 = c_2 = c_3=1$.}

Since the roles of $b$ and $c$ are symmetric, we may assume $a_1 \ge a_2$ without loss of generality.
In this case, we have $I=(x^a, y^2, z^2, x^{a_1}y, x^{a_2}z, yz)$.

We divide into three cases.

\medskip

\noindent
\underline{Subcase 3-a:} $a_2 = 1$.

The case $a_1 = 1$ can be reduced to Case 1. So, we may assume $a_1 \ge 2$.
By a computation similar to Subcase 2-a, we get $\calX = \{x^{a_1}yz\}$.

\medskip

\textit{Reduction Step.}
It remains to consider the case $2 \le a_2 \le \ceil{\frac{a}{2}}$.
By a computation similar to Subcase 2-a, we see that $x^{a_2-1}y^2z, x^{a_2-1}yz^2 \in \calX$.
Since $y^3z, y^2z^2, yz^3 \in I^2$, other possible candidates for $\calX$ are limited to monomials
of the form $x^\alpha yz$ with $\alpha \ge 1$.

\medskip

\noindent
\underline{Subcase 3-b:} $a_1 + a_2 \le a$.

In this case, we have
$\calX = \{x^{a_2-1}y^2z, x^{a_2-1}yz^2, x^{a_1+a_2-1}yz\}.$

\medskip

\noindent
\underline{Subcase 3-c:} $a_1 + a_2 > a$.

In this case, we have
$\calX = \{x^{a_2-1}y^2z, x^{a_2-1}yz^2, x^{a-1}yz\}.$
Note that $a_1 + a_2 > a$ if and only if $a$ is odd and $a_1 = a_2 = \frac{a+1}{2}$, 
because $a_1, a_2 \le \ceil{\frac{a}{2}}$.

\medskip
 
\textit{Summary.}
We summarize the results of this subsubsection in Table \ref{tab:classification_6gen}.
Note that $b_1 = c_2 = c_3 = 1$ in all cases.
The symbol ``--'' indicates that no additional conditions are required beyond the prerequisites.

\begin{table}[h]
\centering
\caption{The candidate monomials $g$ for $\mu_A(I)=6$ case}
\label{tab:classification_6gen}
\small
\begin{tabular}{|c|c|c|c|c|c|c|l|l|}
\hline
\textbf{No.} & $a$ & $b$ & $c$ & $a_1$ & $a_2$ & $b_3$ & \textbf{\small Condition} & \textbf{Candidates $g$} \\
\hline
1 & -- & -- & -- & $1$ & $1$ & $1$ & -- & $xyz = xy^{b_3}z$ \\
\hline
2-a & $\ge 3$ & $\ge 3$ & $2$ & $1$ & $1$ & $\ge 2$ & -- & $xy^{b_3}z$ \\
\hline
2-b & $\ge 3$ & $\ge 3$ & $2$ & $1$ & $\ge 2$ & $\ge 2$ & -- & $x^{a_2}yz$, $xy^{b_3}z$ \\
\hline
3-a & $\ge 3$ & $2$ & $2$ & $\ge 2$ & $1$ & $1$ & -- & $x^{a_1}yz$ \\
\hline
3-b & $\ge 3$ & $2$ & $2$ & $\ge a_2$ & $\ge 2$ & $1$ & $a_1+a_2 \le a$ & $x^{a_2-1}y^2z$, $x^{a_2-1}yz^2$, $x^{a_1+a_2-1}yz$ \\
\hline
3-c & $\ge 3$ & $2$ & $2$ & $\ge a_2$ & $\ge 2$ & $1$ & $a_1+a_2 >a$ & $x^{a_2-1}y^2z$, $x^{a_2-1}yz^2$, $x^{a-1}yz$ \\
\hline
\end{tabular}
\end{table}

\subsubsection{Finding $\lambda =(p,q,r)$ with $\ord_\lambda(g) < \ord_\lambda(I^2)$}

In this subsubsection, for each candidate monomial $g$ listed in
Table~\ref{tab:classification_6gen},
we construct $\lambda = (p,q,r) \in \mathbb{N}^3$
such that
$$
\ord_\lambda(g) < \ord_\lambda(I^2)
$$
for every monomial ideal $I$ of the form considered in the corresponding cases.
By Proposition~\ref{prop:ord_test}, this inequality implies $g \notin \overline{I^2}$.

The choice of $\lambda$ is guided by the exponent structure of $g$ and the generators of $I$.
The verification is then performed by a finite exhaustive case analysis.

Throughout this subsubsection, we consider monomial ideals
$$
I = (x^a, y^b, z^c, x^{a_1}y, x^{a_2}z, y^{b_3}z)
$$
appearing in the classification given in the previous subsubsection.
Here, the exponents $a,b,c,a_1,a_2,b_3$ are arbitrary positive integers
satisfying the conditions of the corresponding case.
Note that, if $b = 2$, then $1 \le a_1, a_2 \le \ceil{\frac{a}{2}}$.

First, we recall the following basic properties.

\begin{itemize}
\item For a monomial $f=x^\alpha y^\beta z^\gamma$ corresponding to the vector $v=(\alpha, \beta, \gamma)$, we have $\ord_\lambda(f) = (\lambda, v)$, the inner product of $\lambda$ and $v$.
\item $\ord_\lambda(I) = \min \{ \ord_\lambda(f) \mid f \in \{x^a, y^b, z^c, x^{a_1}y, x^{a_2}z, y^{b_3}z\}\}$.
\item $\ord_\lambda(I^2) = 2\ord_\lambda(I)$.
\end{itemize}

\bigskip

\noindent
\underline{$g = xy^{b_3}z$}:

We divide into two cases: $2b_3 \le b$ and $2b_3 > b$.

\medskip

\noindent\textbf{When $2b_3 \le b$.}

We put $\lambda = (2b_3-1, 1, b_3)$. Then $\ord_\lambda(g) = 4b_3-1$, and we need $\ord_\lambda(I) \ge 2b_3$.

\begin{center}
\begin{tabular}{|c||c|c|c|c|c|c|}
\hline
$f$ & $x^a$ & $y^b$ & $z^c$ & $x^{a_1}y$ & $x^{a_2}z$ & $y^{b_3}z$ \\
\hline
$\ord_\lambda(f)$ & $a(2b_3-1)$ & $b$ & $cb_3$ & $a_1(2b_3-1)+1$ & $a_2(2b_3-1)+b_3$ & \cellcolor{lightgray}$2b_3$ \\
\hline
\end{tabular}
\end{center}

Since $a\ge 2$, $a_1, a_2 \ge 1$, $b \ge 2b_3\ge 2$, and $c \ge 2$, we have $\ord_\lambda(I) = 2b_3$.

\medskip

\noindent\textbf{When $2b_3 > b$.}

Then $2b_3 = b+1$, since $b_3 \le \ceil{\frac{b}{2}}$.

We put $\lambda = (2b-2, 2, b)$. Then $\ord_\lambda(g) = 4b-1$, and we need $\ord_\lambda(I) \ge 2b$.

\begin{center}
\begin{tabular}{|c||c|c|c|c|c|c|}
\hline
$f$ & $x^a$ & $y^b$ & $z^c$ & $x^{a_1}y$ & $x^{a_2}z$ & $y^{b_3}z$ \\
\hline
$\ord_\lambda(f)$ & $a(2b-2)$ & \cellcolor{lightgray}$2b$ & $cb$ & $2a_1(b-1)+2$ & $2a_2(b-1) + b$ & $2b + 1$ \\
\hline
\end{tabular}
\end{center}

Since $a,b,c \ge 2$ and $a_1, a_2 \ge 1$, we have $\ord_\lambda(I) = 2b$.

\medskip
\noindent\rule{\textwidth}{0.4pt}

\medskip

\noindent
\underline{$g \in \{x^{a_1}yz, x^{a_2}yz\}$}:

For each $i=1,2$, the verification for $g = x^{a_i}yz$ proceeds analogously to that for $xy^{b_3}z$.
Specifically, we put
$$
\lambda = \begin{cases}
(2a_i - 1, 1, a_i) & \text{if } 2a_i \le a \\
(2a-2, 2, a) & \text{if } 2a_i > a,
\end{cases}
$$
and verify $\ord_\lambda(g) < \ord_\lambda(I^2)$ by a similar computation.

\medskip
\noindent\rule{\textwidth}{0.4pt}
\medskip

\noindent
\underline{$g \in \{x^{a_2-1}y^2z, x^{a_2-1}yz^2\}$}:

These monomials $g$ appear in Subcase 3-b, where we may assume $a_1 \ge a_2$ and $b_3 = 1$.
We divide into two cases: $a_1 + a_2 \le a$ and $a_1 + a_2 > a$.

\medskip

\noindent\textbf{When $a_1 + a_2 \le a$.}

We put $\lambda = (1,a_2, a_2)$. Then $\ord_\lambda(g) = 4a_2-1$, and we need $\ord_\lambda(I) \ge 2a_2$.

\begin{center}
\begin{tabular}{|c||c|c|c|c|c|c|}
\hline
$f$ & $x^a$ & $y^b$ & $z^c$ & $x^{a_1}y$ & $x^{a_2}z$ & $yz$ \\
\hline
$\ord_\lambda(f)$ & $a$ & $ba_2$ & $ca_2$ & $a_1 + a_2$ & $a_2 + a_2$ & \cellcolor{lightgray}$2a_2$ \\
\hline
\end{tabular}
\end{center}

Since $a \ge a_1 + a_2 \ge 2a_2$, $b, c \ge 2$, we have $\ord_\lambda(I) = 2a_2$.

\medskip

\noindent\textbf{When $a_1 + a_2 > a$.}

In this case, $a$ is odd and $a_1 = a_2 = \frac{a+1}{2}$.

We put $\lambda = (2,a,a)$. Then $\ord_\lambda(g) = 4a-1$, and we need $\ord_\lambda(I) \ge 2a$.

\begin{center}
\begin{tabular}{|c||c|c|c|c|c|c|}
\hline
$f$ & $x^a$ & $y^b$ & $z^c$ & $x^{a_1}y$ & $x^{a_2}z$ & $yz$ \\
\hline
$\ord_\lambda(f)$ & \cellcolor{lightgray}$2a$ & $ba$ & $ca$ & $2a + 1$ & $2a+1$ & \cellcolor{lightgray}$2a$ \\
\hline
\end{tabular}
\end{center}

Since $b, c \ge 2$, we have $\ord_\lambda(I) = 2a$.

\medskip
\noindent\rule{\textwidth}{0.4pt}

\medskip

\noindent
\underline{$g = x^{a-1}yz$}:

This monomial $g$ appears only in Subcase 3-c, where $a$ is odd, $b = c = 2$, $a_1 = a_2 = \frac{a+1}{2}$, and $b_3 = 1$.
We put $\lambda = (2,a,a)$. Then $\ord_\lambda(g) = 4a-1$ and $\ord_\lambda(I) = 2a$ by a similar computation to the previous case.

\medskip
\noindent\rule{\textwidth}{0.4pt}

\medskip

\noindent
\underline{$g = x^{a_1+a_2-1}yz$}:

This monomial $g$ appears only in Subcase 3-b, where we have $b = c = 2$, $a_1 \ge a_2 \ge 2$, $b_3 = 1$, and $a_1 + a_2 \le a$.
We divide into two cases: $2a_1 \le a$ and $a_1 + a_2 \le a < 2a_1$.

\medskip

\noindent\textbf{When $2a_1 \le a$.}

We put $\lambda = (1, a_1, 2a_1 - a_2)$. Then $\ord_\lambda(g) = 4a_1-1$, and we need $\ord_\lambda(I) \ge 2a_1$.

\begin{center}
\begin{tabular}{|c||c|c|c|c|c|c|}
\hline
$f$ & $x^a$ & $y^2$ & $z^2$ & $x^{a_1}y$ & $x^{a_2}z$ & $yz$ \\
\hline
$\ord_\lambda(f)$ & $a$ & \cellcolor{lightgray}$2a_1$ & $4a_1-2a_2$ & \cellcolor{lightgray}$2a_1$ & $3a_1-a_2$ & \cellcolor{lightgray}$2a_1$ \\
\hline
\end{tabular}
\end{center}

Since $a \ge 2a_1$, $a_2 \ge 2$, we have $\ord_\lambda(I) = 2a_1$.

\medskip

\noindent\textbf{When $a_1 + a_2 \le a < 2a_1$.}

In this case, $a$ is odd and $a_1 = \frac{a+1}{2}$. Hence $a_2 \le \frac{a-1}{2}$.

We put $\lambda = (2, a, 2(a - a_2))$. Then $\ord_\lambda(g) = 4a-1$, and we need $\ord_\lambda(I) \ge 2a$.

\begin{center}
\begin{tabular}{|c||c|c|c|c|c|c|}
\hline
$f$ & $x^a$ & $y^2$ & $z^2$ & $x^{a_1}y$ & $x^{a_2}z$ & $yz$ \\
\hline
$\ord_\lambda(f)$ & \cellcolor{lightgray}$2a$ & \cellcolor{lightgray}$2a$ & $4(a-a_2)$ & $2a + 1$ & \cellcolor{lightgray}$2a$ & $3a - 2a_2$ \\
\hline
\end{tabular}
\end{center}

Since $2a_2 \le a-1$, we have $\ord_\lambda(I) = 2a$.

\medskip
\noindent\rule{\textwidth}{0.4pt}

\bigskip

Hence, for each $g$ in Table~\ref{tab:classification_6gen}, we have verified that $\ord_\lambda(g) < \ord_\lambda(I^2)$ for an appropriately chosen $\lambda$.
By Theorem~\ref{thm:RRV} and Proposition~\ref{prop:ord_test}, we conclude that $I$ is normal.

\subsection{The Case $\mu_A(I) =7$} \label{sec:7gens}

\subsubsection{Classification of integrally closed monomial ideals}

Let $I$ be an integrally closed monomial ideal with $\mu_A(I) = 7$ and $\height_AI = 3$. 
We may assume $x,y,z \notin I$, since otherwise we know that $I$ is normal similar to the case $\mu_A(I) = 6$.

By Lemma \ref{lem:key}, we can write
$I=(x^a, y^b, z^c, x^{a_1}y^{b_1}, x^{a_2}z^{c_2}, y^{b_3}z^{c_3}, f)$
for some $1 \le a_1, a_2 < a$, $1 \le b_1, b_3 < b$, $1 \le c_2, c_3 < c$, and monomial $f \in k[x,y,z]$.
Suppose $f \in (xyz)$.
Then $I_1 = I \cap k[x,y] = (x^a,y^b, x^{a_1}y^{b_1})$, $I_2 = I \cap k[x,z] = (x^a, z^c, x^{a_2}z^{c_2})$, $I_3 =I\cap k[y,z] = (y^b, z^c, y^{b_3}z^{c_3})$.
By an argument similar to that in the proof of Theorem \ref{thm:6gens}, after permuting $x,y,z$ if necessary, we have $b_3=c_3=1$.
Hence $yz \in I$, which implies that $f$ cannot be a minimal generator of $I$. This is a contradiction.
Therefore, after a suitable permutation if necessary, we may assume $f \in k[x,y]$.

Thus, by renaming the exponents, we can write
$$
I=(x^a, y^b, z^c, x^{a_1}y^{b_1}, x^{a_2}y^{b_2}, x^{a_3}z^{c_3}, y^{b_4}z^{c_4})
$$
for some positive integers $a_1, a_2, a_3, b_1, b_2, b_4, c_3, c_4 \ge 1$ satisfying $1 \le a_2 < a_1 < a$, $1 \le a_3 < a$, $1 \le b_1 < b_2 < b$, $1 \le b_4 < b$, and $1\le c_3, c_4 < c$.

\begin{thm}\label{thm:7gens}
	Let $a,b,c,a_1, a_2, a_3, b_1, b_2, b_4, c_3, c_4$ be positive integers with $1 \le a_2 < a_1 < a$, $1 \le a_3 < a$, $1 \le b_1 < b_2 < b$, $1 \le b_4 < b$, and $1\le c_3, c_4 < c$.
	Put $I=(x^a, y^b,z^c, x^{a_1} y^{b_1}, x^{a_2}y^{b_2}, x^{a_3}z^{c_3}, y^{b_4}z^{c_4})$.
	Then the following conditions are equivalent.
	
	\begin{enumerate}
		\item $I=\overline{I}$.
		\item $I_i = \overline{I_i}$ for every $i=1,2,3$, and one of $xz$ and $yz$ is in $I$.
		\item $c_3 = c_4=1$ and, up to permuting $x$ and $y$, we have $a_1=2$ and $a_2=1$, and one of the following holds:
		\begin{enumerate}
			\item $a,b \ge 4$, $b_1 = 1$, $2\le b_2 \le \ceil{\frac{b+1}{2}}$, $1\le a_3 \le \ceil{\frac{a}{2}}$, $1\le b_4 \le \ceil{\frac{b}{2}}$, and $a_3=1$ or $b_4 = 1$.
			\item $b=3$, $c=2$, $b_1=1$, $b_2=2$, $1 \le a_3 \le \ceil{\frac{a}{2}}$, $1\le b_4 \le 2$, and $a_3 = 1$ or $b_4 = 1$.
			\item $a=3$, $c=2$, $1 \le b_1 \le \ceil{\frac{b}{3}}$, $\max\{2b_1-1, b_1+1\} \le b_2 \le \ceil{\frac{b+b_1}{2}}$, $1\le a_3 \le 2$, $1 \le b_4 \le \ceil{\frac{b}{2}}$, and $a_3=1$ or $b_4=1$.
		\end{enumerate}
	\end{enumerate}	
\end{thm}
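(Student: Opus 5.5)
The plan follows the proof of Theorem~\ref{thm:6gens}: prove (1)$\Rightarrow$(2)$\Rightarrow$(3)$\Rightarrow$(1). First, record a fact used repeatedly. Since $1 \le a_2 < a_1 < a$ and $1 \le b_1 < b_2 < b$, we have $a, b \ge 3$. Moreover $I_1=(x^a,y^b,x^{a_1}y^{b_1},x^{a_2}y^{b_2})$ has four minimal generators, while $I_2=(x^a,z^c,x^{a_3}z^{c_3})$ and $I_3=(y^b,z^c,y^{b_4}z^{c_4})$ each have three.

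\textbf{(1)$\Rightarrow$(2).} That each $I_i$ is integrally closed is immediate. For the second clause, suppose $xz, yz \notin I$. Apply Proposition~\ref{prop:2-variables}(1) to $I_2$ with $a \ge 3$. If $c \ge 3$ it forces $a_3=c_3=1$, i.e.\ $xz \in I$. Hence $c=2$, so $c_3=c_4=1$, and then $a_3 \ge 2$ and $b_4 \ge 2$. Now consider
\[
m = x^{\ceil{a_3/2}}y^{\ceil{b_4/2}}z .
\]
Its square is divisible by $x^{a_3}z\cdot y^{b_4}z \in I^2$, so $m \in \overline{I}$. On the other hand, the only minimal generators of $I$ involving $z$ are $z^2$, $x^{a_3}z$ and $y^{b_4}z$, and $\ceil{a_3/2}<a_3$, $\ceil{b_4/2}<b_4$. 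Hence $m\notin I$, contradicting $I=\overline{I}$.

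\textbf{(3)$\Rightarrow$(1).} Since $a_3=1$ or $b_4=1$ while $c_3=c_4=1$, we get $xz\in I$ or $yz\in I$. The numerical conditions in (a), (b), (c) are exactly those of Proposition~\ref{prop:2-variables} for $I_1$ (four generators) and for $I_2$, $I_3$ (three generators), so each $I_i$ is integrally closed. Lemma~\ref{lem:xy} then gives $I=\overline{I}$.

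\textbf{(2)$\Rightarrow$(3).} First show $c_3=c_4=1$. If $c\ge3$, Proposition~\ref{prop:2-variables}(1)(a) applied to $I_2$ and $I_3$ (with $a,b\ge3$) forces $a_3=c_3=b_4=c_4=1$. If $c=2$, then $c_3=c_4=1$ trivially. Next apply Proposition~\ref{prop:2-variables}(2) to $I_1$, swapping $x$ and $y$ if necessary, which forces $a_1=2$, $a_2=1$. Then split according to whether $a,b\ge4$ (Proposition~\ref{prop:2-variables}(2)(a), giving $b_1=1$ and the bound on $b_2$) or the exponent of $x$ equals $3$ (Proposition~\ref{prop:2-variables}(2)(b), giving the bounds on $b_1,b_2$). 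In each branch, read off the ranges of $a_3$ and $b_4$ from Proposition~\ref{prop:2-variables}(1) for $I_2$ and $I_3$. The clause ``$a_3=1$ or $b_4=1$'' is the translation of $xz\in I$ or $yz\in I$.

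The main obstacle is the bookkeeping in this last implication: keeping track of which variable was swapped, and reconciling the $c=2$ versus $c\ge3$ branches with the case list. The awkward spot is the subcase where the $x$-exponent is $3$ and $c\ge3$. There $a_3=b_4=1$ automatically, and one must check carefully how this configuration is absorbed into the list, either via the permutation or by showing it falls under (a) or (b). I would settle this by a direct comparison of the conditions of Proposition~\ref{prop:2-variables} case by case.
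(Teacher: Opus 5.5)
\textbf{(1)$\Rightarrow$(2).} This direction has a genuine gap. To conclude $m=x^{\lceil a_3/2\rceil}y^{\lceil b_4/2\rceil}z\notin I$ you only ruled out the generators involving $z$. But $m$ can be divisible by one of the $k[x,y]$-generators $x^{a_1}y^{b_1}$ or $x^{a_2}y^{b_2}$.

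For instance, $I=(x^5,y^5,z^2,x^2y,xy^2,x^3z,y^2z)$ satisfies all the hypotheses. By Proposition~\ref{prop:2-variables}, $I_1,I_2,I_3$ are integrally closed, and $xz,yz\notin I$. Yet your $m=x^2yz$ is divisible by $x^2y$, so no contradiction arises. The correct witness here is $xyz$.

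Handling this interaction is the real content of the implication, and it needs the shape of $I_1$. The paper first normalizes to $a_1=2$, $a_2=1$ via Proposition~\ref{prop:2-variables}. It then uses $xy^{b_2-1}z$ when $b_2<b_4$, and $x^{a_3-1}y^{b_4-1}z$ when $b_2\ge b_4$. It switches to $xy^{b_4-1}z$, whose square is divisible by $x^2y^{b_1}\cdot z^2$, exactly when $x^2y^{b_1}$ divides $x^{a_3-1}y^{b_4-1}z$.

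A repair closer to your idea: after normalizing, take $xy^{\lceil b_1/2\rceil}z$ when $\lceil b_1/2\rceil<b_4$. Its square is divisible by $x^2y^{b_1}\cdot z^2$, and it is not in $I$ since $a_3\ge 2$ and $\lceil b_1/2\rceil\le b_1<b_2$. Otherwise $b_1\ge 3$, which forces $a=3$ and hence $a_3=2$. Then $xy^{\lceil b_4/2\rceil}z$ works, since its square is divisible by $(x^2z)(y^{b_4}z)$.

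\textbf{(2)$\Leftrightarrow$(3).} The ``awkward spot'' you flag cannot be settled by bookkeeping, because condition (3) as printed is not equivalent to (2).

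Take $\min\{a,b\}=3$ and $c\ge 3$, e.g.\ $I=(x^3,y^4,z^3,x^2y,xy^2,xz,yz)$. Then (1) and (2) hold, but none of (a)--(c) holds in either labelling of $x,y$.

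Conversely, (a) does not tie $a_3,b_4$ to $c$. So $I=(x^4,y^4,z^3,x^2y,xy^2,x^2z,yz)$ satisfies (a), although $xz^2\in\overline{I}\setminus I$.

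Hence your claim in (3)$\Rightarrow$(1), that (a)--(c) are exactly the conditions of Proposition~\ref{prop:2-variables}, is false as stated. What that proposition actually yields is a corrected list: in (a) add ``$c=2$ or $a_3=b_4=1$'', and in the cases with $\min\{a,b\}=3$ also allow $c\ge 3$ with $a_3=b_4=1$. This is the classification the paper in fact uses afterwards, in its Case~1 and in the remark that $c\ge 3$ forces $a_3=b_4=1$. The paper's one-line proof of (2)$\Leftrightarrow$(3) passes over the same point.
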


\begin{proof}
\textbf{(1) $\Rightarrow$ (2):}
Suppose (1). Then $I_i$ is integrally closed for every $i=1,2,3$. We first show that $c_3 = c_4=1$ and, up to permuting $x$ and $y$, $a_1=2$ and $a_2=1$.
If $c = 2$, then we get $c_3 = c_4=1$ automatically.
Suppose $c \ge 3$. 
Since $I_2 = (x^a, z^c, x^{a_3}z^{c_3})$ and $I_3 = (y^b, z^c, y^{b_4}z^{c_4})$ are integrally closed, 
applying Proposition~\ref{prop:2-variables} to these ideals with $a,b\ge3$ yields $a_3 = c_3 = b_4 = c_4=1$.
Since $I_1 = (x^a, y^b, x^{a_1}y^{b_1}, x^{a_2}y^{b_2})$ is integrally closed, again by Proposition \ref{prop:2-variables}, up to permuting $x$ and $y$, we have $a_1 = 2$ and $a_2= 1$.

We now show that $a_3 = 1$ or $b_4=1$.
We may assume $a_1 = 2$ and $a_2 = c_3 = c_4 = 1$.
Furthermore, we may assume $c=2$, because if $c \ge 3$, then $a_3=b_4=1$ by Proposition \ref{prop:2-variables}.
Assume for contradiction that $a_3 \ge 2$ and $b_4 \ge 2$.

If $b_2 < b_4$, then $(xy^{b_2-1}z)^2
= y^{2b_2-2-b_1}(x^2y^{b_1})z^2 \in I^2$,
which implies $xy^{b_2-1}z \in \overline{I} \setminus I$,
a contradiction.

Therefore, we may assume $b_2 \ge b_4$.
In this case, note that $x^2y^{b_1}$ is the only generator of $I$
that can divide $x^{a_3-1}y^{b_4-1}z$.
Thus, $x^{a_3-1}y^{b_4-1}z \in I$
if and only if $a_3 \ge 3$ and $b_4 \ge b_1+1$.

If $x^{a_3-1}y^{b_4-1}z \notin I$, then
$(x^{a_3-1}y^{b_4-1}z)^2
= (x^{a_3}z)(y^{b_4}z)x^{a_3-2}y^{b_4-2} \in I^2$,
so $x^{a_3-1}y^{b_4-1}z \in \overline{I}$, contradicting $I=\overline{I}$.

On the other hand, if $x^{a_3-1}y^{b_4-1}z \in I$, then
$(xy^{b_4-1}z)^2 = (x^2y^{b_1}) z^2 y^{2b_4-2-b_1} \in I^2$
while $xy^{b_4-1}z \notin I$, which leads to the impossible conclusion that $I \ne \overline{I}$.

Therefore, we must have $a_3 = 1$ or $b_4=1$.

\medskip

\textbf{(2) $\Rightarrow$ (1):}
This follows from Lemma \ref{lem:xy}.

\medskip

\textbf{(2) $\Leftrightarrow$ (3):}
This follows from Proposition \ref{prop:2-variables}, similarly to the proof of Theorem \ref{thm:6gens}.
More precisely, condition (2) states that $I_1, I_2, I_3$ are integrally closed and at least one of $xz$ and $yz$ is in $I$.
By Proposition \ref{prop:2-variables}, the integrally closedness of $I_1, I_2, I_3$ is equivalent to the conditions on the exponents listed in (3)(a), (3)(b), and (3)(c), depending on the values of $a, b, c$.
The additional constraint "$a_3=1$ or $b_4=1$" in (3) ensures that at least one of $xz$ and $yz$ is in $I$, as shown above.
\end{proof}

\subsubsection{Determining $(I^2:\fkm)\setminus I^2$}

Let $I$ be an integrally closed monomial ideal in $A=k[x,y,z]$ with $\height_AI = 3$ and $\mu_A(I) = 7$.
In this subsubsection, we put 
$$\calX = \left\{g \in (I^2:\fkm)\setminus I^2 ~\middle|~ g = x^\alpha y^\beta z^\gamma~(\alpha, \beta, \gamma \in \mathbb{N}) \right\}.$$

We use the notation in Theorem \ref{thm:7gens}. Then we always have $c_3 = c_4 =1$.
Furthermore, after switching $x$ and $y$ if necessary, we may assume $a_1=2$ and $a_2 = 1$.
Namely,
$$
I=(x^a, y^b, z^c, x^2y^{b_1}, xy^{b_2}, x^{a_3}z, y^{b_4}z).
$$
Here, no assumption is made on the ordering of $a,b,c$.

\noindent
\textbf{Case 1:}  $a_3=b_4=1$.

Since $xyz^2 \in I^2$, if $g = x^\alpha y^\beta z^\gamma \in (xyz)$ satisfies $g \notin I^2$, then $\gamma = 1$.
Hence, we need to consider the monomial $g = x^\alpha y^\beta z$.

\medskip

\noindent
\underline{Subcase 1-a:} $a = 3$.

In this case, we have $x^3yz \in I^2$, which implies $1 \le \alpha \le 2$.
It can be directly checked that 
$$x^2y^{b_1+1}z, xy^{b_2+1}z \in I^2 \quad \text{ and } \quad x^2y^{b_1}z, xy^{b_2}z \notin I^2.$$
Therefore, we have $\calX = \{x^2y^{b_1}z, xy^{b_2}z\}$.


\medskip

\noindent
\underline{Subcase 1-b:} $b = 3$.

We have $b_1 = 1$ and $b_2 = 2$. Then, by switching $x$ and $y$, we can reduce to Subcase 1-a.

\medskip

\noindent
\underline{Subcase 1-c:} $a,b \ge 4$.

Note that, in this case, we have $b_1 = 1$. 
Indeed, since $\mu_{k[x,y]}(I_1) = 4$ where $I_1 = I \cap k[x,y] = (x^a, y^b, x^2y^{b_1},  xy^{b_2})k[x,y]$, 
we have $\ord(I_1) = 3$, which implies $b_1 = 1$ or $b_2 = 2$. 
Even if $b_2=2$, the condition $0<b_1<b_2$ forces $b_1 = 1$.
Hence $x^3yz \in I^2$, which implies $1 \le \alpha \le 2$.
Therefore, as in Subcase 1-a, we have $\calX = \{x^2y^{b_1}z, xy^{b_2}z\}$.

\medskip

This completes the determination of $\calX$ in Case 1. 
Although the analysis was divided into three subcases, we have verified that each reduces to the same form.

\medskip

\textit{Reduction Step.}
When $c\ge 3$, we always have $a_3 = b_4=1$. Thus, Case 1 handles all cases with $c \ge 3$, so from now on we may assume $c = 2$. Moreover, we may assume $a_3 \ge 2$ or $b_4 \ge 2$.
By Theorem \ref{thm:7gens}, we also have $a_3=1$ or $b_4=1$. Hence one of $a_3$ and $b_4$ is $1$ and the other is at least $2$.

\bigskip

\noindent
\textbf{Case 2:} $b=3$ and $c=2$.

In this case, $b_1=1$, $b_2 = 2$, and $1 \le b_4 \le 2$.

\medskip

\noindent
\underline{Subcase 2-a:} $b_4 = 2$.

By our assumption, $a_3 =1$ and hence $I=(x^a, y^3, z^2, x^2y, xy^2, xz, y^2z)$.
Then we can directly verify that $x^2yz, xy^2z, xyz^2 \in \calX$, and hence $$\calX = \{x^2yz, xy^2z, xyz^2\}.$$

\medskip

\noindent
\underline{Subcase 2-b:} $b_4= 1$.

Then $I=(x^a, y^3, z^2, x^2y, xy^2, x^{a_3}z, yz)$. 
In this case, we easily verify that $xy^2z, xyz^2 \in \calX$.
Moreover, since $x^{a_3+1}yz \notin I^2$ and $x^{a_3+2}yz = ( x^{a_3}z)(x^2y) \in I^2$, we have $$\calX = \{x^{a_3+1}yz, xy^2z, xyz^2\}.$$

\bigskip

\noindent
\textbf{Case 3:} $a, b\ge 4$ and $c=2$.

In this case, as in Subcase 1-c, we may assume $b_1 = 1$.
Furthermore, since $a \ge 4$ and $a_3 \le \ceil{\frac{a}{2}}$, we have $a_3 + 2 \le a$. 

We further divide this case into five subcases.

\medskip

\noindent
\underline{Subcase 3-a:} $b_4 = 1$.

Note that $I=(x^a, y^b, z^2, x^2y, xy^{b_2}, x^{a_3}z, yz)$.
Since $x^2y^2z, x^2yz^2, yz^3, y^2z^2 \in I^2$, if $g = x^\alpha y^\beta z^\gamma \in (xyz)$ satisfies $g \notin I^2$, then we have
\begin{itemize}
	\item $1\le \gamma \le  2$,
	\item if $\gamma = 2$, then $\alpha = \beta = 1$, and
	\item if $\gamma = 1$, then $\alpha = 1$ or $\beta = 1$.
\end{itemize}
Hence $g \in \{x^\alpha yz , xy^\beta z, xyz^2\}$ for some $\alpha, \beta \in \mathbb{N}$.

For the case $g = x^\alpha yz$, we have $\alpha = a_3 + 1$, since $a_3 + 2 \le a$.
For the case $g = xy^\beta z$, we can directly check that $\beta = b_2$.
These observations imply that $\calX = \{x^{a_3+1}yz, xy^{b_2}z, xyz^2\}$.

\medskip

\textit{Reduction Step.} 
From now on, we assume $a_3 = 1$. Hence $b_4 \ge 2$ and $$I=(x^a, y^b, z^2, x^2y, xy^{b_2}, xz, y^{b_4}z).$$
In particular, since $x^3yz, xz^3, x^2z^2 \in I^2$, if a monomial $g = x^\alpha y^\beta z^\gamma \in (xyz)$ satisfies $g \notin I^2$, then $\alpha \le 2$ and $\gamma \le 2$, and we cannot have $\alpha = \gamma = 2$ simultaneously.
Thus, we have $g \in \{x^2y^{\beta_1} z, xy^{\beta_2} z^2, xy^{\beta_3} z\}$ for some $\beta_1, \beta_2, \beta_3 \in \mathbb{N}$.
To determine $\calX$, we compute $\beta_1 = \max \{\beta \ge 1 \mid x^2y^\beta z \notin I^2\}$, $\beta_2 = \max \{\beta \ge 1 \mid xy^\beta z^2 \notin I^2\}$, and $\beta_3 = \max \{\beta \ge 1 \mid xy^\beta z \notin I^2\}$ in each subcase.

\medskip

\noindent
\underline{Subcase 3-b:} $b$ is odd and $b < b_2 + b_4$.

Recall that $b_2 \le \ceil{\frac{b+b_1}{2}} = \ceil{\frac{b+1}{2}}$ and $b_4 \le \ceil{\frac{b}{2}}$. Since $b$ is odd, we have $b_2 \le \frac{b+1}{2}$ and $b_4 \le \frac{b+1}{2}$.
Therefore, the assumption $b < b_2 + b_4$ implies that $b_2 = b_4 = \frac{b+1}{2}$.
Moreover, we can compute that $\beta_1 = \beta_2 = b_2-1$ and $\beta_3 = b-1$. 
Hence $$\calX = \{x^2y^{b_2-1}z, xy^{b-1}z, xy^{b_2-1}z^2\}$$ in this case.

\noindent
\underline{Subcase 3-c:} $b$ is even and $b < b_2 + b_4$.

Similarly to Subcase 3-b, we have $$\calX = \{x^2y^{b_2-1}z, xy^{b-1}z, xy^{b_4-1}z^2\}.$$ 
We nonetheless treat this as a separate case, since the values of $b_2$ and $b_4$
are determined differently from those in Subcase 3-b.
Note that since $b_2 \le \frac{b+2}{2}$ and $b_4 \le \frac{b}{2}$, the assumption $b < b_2+b_4$ implies $b_2 = \frac{b+2}{2}$ and $b_4 = \frac{b}{2}$.

\medskip

\noindent
\underline{Subcase 3-d:} $b\ge b_2+b_4$ and $b_2 \le b_4$.

Since $b \ge b_2+b_4$ (resp. $b_2 \le b_4$), we have $\beta_3 = b_2+b_4-1$ (resp. $\beta_1 = \beta_2 = b_2-1$).
Therefore, $$\calX = \{x^2y^{b_2-1}z, xy^{b_2-1}z^2, xy^{b_2+b_4-1}z\}.$$

\medskip

\noindent
\underline{Subcase 3-e:} $b\ge b_2+b_4$ and $b_2 > b_4$.

By computing $\beta_1,\beta_2, \beta_3$ similarly to Subcase 3-d, it is easy to see that $$\calX = \{x^2y^{b_4}z, xy^{b_4-1}z^2, xy^{b_2+b_4-1}z\}.$$

\bigskip

\noindent
\textbf{Case 4:} $a = 3$, $b\ge 4$, and $c=2$.

Recall that it suffices to consider the case where exactly one of
$a_3$ and $b_4$ equals $1$.

\medskip
\noindent
\underline{Subcase 4-a:} $b_4 = 1$.

In this case, we have $a_3 \ge 2$. Moreover, since $a=3$, we must have $a_3 = 2$.
Namely, $I=(x^3, y^b, z^2, x^2y^{b_1}, xy^{b_2}, x^2z, yz)$, and hence $x^3yz, yz^3, x^2yz^2 \in I^2$. 
Therefore, if $g = x^\alpha y^\beta z^\gamma \in (xyz)$ satisfies $g \notin I^2$, then $\alpha \le 2$ and $\gamma \le 2$, and we cannot have $\alpha = 2$ and $\gamma =2$ simultaneously. Thus $g \in \{x^2y^{\beta_1} z, xy^{\beta_2} z^2, xy^{\beta_3} z\}$ for some $\beta_1, \beta_2, \beta_3 \in \mathbb{N}$.
By determining the appropriate values of $\beta_i$ for which $g \in I^2:\fkm$, we can verify that $$\calX = \{x^2y^{b_1}z, xyz^2, xy^{b_2}z\}.$$

\medskip

\textit{Reduction Step.}
From now on, we assume $a_3 = 1$ and $b_4 \ge 2$, and divide this case into six subcases.
In this setting, $I=(x^3, y^b, z^2, x^2y^{b_1}, xy^{b_2}, xz, y^{b_4}z)$.
If $g = x^\alpha y^\beta z^\gamma \in (xyz)$ satisfies $g \notin I^2$, then since $x^2z^2, x^4z, xz^3\in I^2$, we have $\alpha \le 3$ and $\gamma \le 2$, and if $\gamma = 2$ then $\alpha =1$. Thus $g \in \{x^3y^{\beta_1} z, x^2 y^{\beta_2} z, x y^{\beta_3} z^2, xy^{\beta_4} z\}$ for some $\beta_1, \beta_2, \beta_3, \beta_4 \in \mathbb{N}$.
We compute $\beta_1 = \max \{\beta \ge 1 \mid x^3y^\beta z \notin I^2\}$, $\beta_2 = \max \{\beta \ge 1 \mid x^2y^\beta z \notin I^2\}$, $\beta_3 = \max \{\beta \ge 1 \mid xy^\beta z^2 \notin I^2\}$, and $\beta_4 = \max \{\beta \ge 1 \mid xy^\beta z \notin I^2\}$ in each subcase.

\medskip

\noindent
\underline{Subcase 4-b:} $b_2 < b_4$.

In this case, since $b_4 \le \ceil{\frac{b}{2}}$, we have $b_2 + b_4 < 2b_4 \le b+1$.
Moreover, since $b_2 < b_4 < b_1 + b_4$, we obtain $\beta_1 = b_1-1$, $\beta_2 = \beta_3 = b_2-1$, and $\beta_4 = b_2+b_4-1$. However, note that when $b_1 = 1$, we have $\beta_1 = 0$, which contradicts the definition of $\beta_1$.
Therefore,
$$
\calX = 
\begin{cases}
	\{x^3y^{b_1-1}z, x^2y^{b_2-1}z, xy^{b_2-1}z^2, xy^{b_2+b_4-1}z\} & \text{(if $b_1 \ge 2$)}\\
	\{x^2y^{b_2-1}z, xy^{b_2-1}z^2, xy^{b_2+b_4-1}z\} & \text{(if $b_1 = 1$)}.
\end{cases}
$$

\medskip
\textit{Note: } 
For the remaining cases, we only need to compute $\beta_i$ $(i=1,2,3,4)$ similarly, so we state only the conclusions.

\medskip

\noindent
\underline{Subcase 4-c:} $a_3 = 1$, $b_4 \ge 2$, $b_1 \le b_4 \le b_2$, and $b < b_2+b_4$. Notice that $b_2 \le b_4 + b_1$, because  $2 b_2 \le b+b_1+1 \le b_2 + b_4+b_1$.

$$
\calX = 
\begin{cases}
	\{x^3y^{b_1-1}z, x^2y^{b_2-1}z, xy^{b_4-1}z^2, xy^{b-1}z\} & \text{(if $b_1 \ge 2$)}\\
	\{x^2y^{b_2-1}z, xy^{b_4-1}z^2, xy^{b-1}z\} & \text{(if $b_1 = 1$)}.
\end{cases}
$$

\medskip

\noindent
\underline{Subcase 4-d:} $a_3 = 1$, $b_4 \ge 2$, $b_1 \le b_4 \le b_2$, $b \ge b_2+b_4$, and $b_2 \le b_1 + b_4$.

$$
\calX = 
\begin{cases}
	\{x^3y^{b_1-1}z, x^2y^{b_2-1}z, xy^{b_4-1}z^2, xy^{b_2+b_4-1}z\} & \text{(if $b_1 \ge 2$)}\\
	\{x^2y^{b_2-1}z, xy^{b_4-1}z^2, xy^{b_2+b_4-1}z\} & \text{(if $b_1 = 1$)}.
\end{cases}
$$

\medskip

\noindent
\underline{Subcase 4-e:} $a_3 = 1$, $b_4 \ge 2$, $b_1 \le b_4 \le b_2$, $b \ge b_2+b_4$, and $b_2 > b_1 + b_4$.

$$
\calX = 
\begin{cases}
	\{x^3y^{b_1-1}z, x^2y^{b_1+b_4-1}z, xy^{b_4-1}z^2, xy^{b_2+b_4-1}z\} & \text{(if $b_1 \ge 2$)}\\
	\{x^2y^{b_1+b_4-1}z, xy^{b_4-1}z^2, xy^{b_2+b_4-1}z\} & \text{(if $b_1 = 1$)}.
\end{cases}
$$

\medskip

\noindent
\underline{Subcase 4-f:} $a_3 = 1$, $b_4 \ge 2$, $b_4 < b_1$, $b \ge b_2+b_4$, and $b_2 \le b_1 + b_4$.

$$
\calX = \{x^3y^{b_4-1}z, x^2y^{b_2-1}z, xy^{b_4-1}z^2, xy^{b_2+b_4-1}z\}.
$$

\medskip

\noindent
\underline{Subcase 4-g:} $a_3 = 1$, $b_4 \ge 2$, $b_4 < b_1$, $b \ge b_2+b_4$, and $b_2 > b_1 + b_4$.

$$
\calX = \{x^3y^{b_4-1}z, x^2y^{b_1+b_4-1}z, xy^{b_4-1}z^2, xy^{b_2+b_4-1}z\}.
$$

\bigskip

We summarize the verification in this subsubsection in Tables \ref{tab:7gen_cases_params} and \ref{tab:7gen_cases_candidates}.
For Case 1, although the analysis was divided into three subcases, they all yield the same form of candidate monomials. 
Moreover, in Subcases 2-a and 2-b, we have $b_1=1$ and $b_2=2$, so $x^2yz$ and $xy^2z$ can be regarded as $x^2y^{b_1}z$ and $xy^{b_2}z$, respectively. In Table \ref{tab:7gen_cases_candidates}, we present them in this form.

\begin{table}[ht]
\centering
\caption{Classification of 7-generated integrally closed ideals: Parameter conditions}
\label{tab:7gen_cases_params}
\small
\begin{tabular}{|c|c|c|c|c|c|c|c|p{6cm}|}
\hline
\textbf{Case} & $a$ & $b$ & $c$ & $b_1$ & $b_2$ & $a_3$ & $b_4$ & \textbf{Additional conditions} \\
\hline
1 & -- & -- & -- & -- & -- & $1$ & $1$ & -- \\
\hline
2-a & -- & $3$ & $2$ & $1$ & $2$ & $1$ & $2$ & -- \\
\hline
2-b & $\ge 4$ & $3$ & $2$ & $1$ & $2$ & $\ge 2$ & $1$ & -- \\
\hline
3-a & $\ge 4$ & $\ge 4$ & $2$ & $1$ & -- & $\ge 2$ & $1$ & -- \\
\hline
3-b & $\ge 4$ & $\ge 4$, odd & $2$ & $1$ & -- & $1$ & $\ge 2$ & $b < b_2+b_4$ \quad $\therefore$ $b_2=b_4=\frac{b+1}{2}$ \\
\hline
3-c & $\ge 4$ & $\ge 4$, even & $2$ & $1$ & -- & $1$ & $\ge 2$ & $b < b_2+b_4$ \quad $\therefore$ $b_2=\frac{b+2}{2}$, $b_4=\frac{b}{2}$ \\
\hline
3-d & $\ge 4$ & $\ge 4$ & $2$ & $1$ & -- & $1$ & $\ge 2$ & $b \ge b_2+b_4$, $b_4 \ge b_2$ \\
\hline
3-e & $\ge 4$ & $\ge 4$ & $2$ & $1$ & -- & $1$ & $\ge 2$ & $b \ge b_2+b_4$, $b_4 < b_2$ \\
\hline
4-a & $3$ & $\ge 4$ & $2$ & -- & -- & $2$ & $1$ & -- \\
\hline
4-b & $3$ & $\ge 4$ & $2$ & -- & -- & $1$ & $\ge 2$ & $b_2 < b_4$ \quad $\therefore$ $b \ge b_2+b_4$\\
\hline
4-c & $3$ & $\ge 4$ & $2$ & -- & -- & $1$ & $\ge 2$ & $b_1 \le b_4 \le b_2$, $b < b_2+b_4$ \\
\hline
4-d & $3$ & $\ge 4$ & $2$ & -- & -- & $1$ & $\ge 2$ & $b_1 \le b_4 \le b_2$, $b \ge b_2+b_4$, $b_2 \le b_1+b_4$ \\
\hline
4-e & $3$ & $\ge 4$ & $2$ & -- & -- & $1$ & $\ge 2$ & $b_1 \le b_4 \le b_2$, $b \ge b_2+b_4$, $b_2 > b_1+b_4$ \\
\hline
4-f & $3$ & $\ge 4$ & $2$ & -- & -- & $1$ & $\ge 2$ & $b_4 < b_1$, $b \ge b_2+b_4$, $b_2 \le b_1+b_4$ \\
\hline
4-g & $3$ & $\ge 4$ & $2$ & -- & -- & $1$ & $\ge 2$ & $b_4 < b_1$, $b \ge b_2+b_4$, $b_2 > b_1+b_4$ \\
\hline
\end{tabular}
\end{table}

\begin{table}[ht]
\centering
\caption{Classification of 7-generated integrally closed ideals: Candidate monomials}
\label{tab:7gen_cases_candidates}
\begin{tabular}{|c|c|c|c|c|}
\hline
\textbf{Case} & $g_1$ & $g_2$ & $g_3$ & $g_4$ \\
\hline
1 & $x^2y^{b_1}z$ & $xy^{b_2}z$ & -- & -- \\
\hline
2-a & $x^2y^{b_1}z$ & $xyz^2$ & $xy^{b_2}z$ &  -- \\
\hline
2-b & $x^{a_3+1}yz$ & $xyz^2$  & $xy^{b_2}z$ & -- \\
\hline
3-a & $x^{a_3+1}yz$ & $xy^{b_2-1}z^2$ & $xy^{b_2}z$ & -- \\
\hline
3-b & $x^2y^{b_2-1}z$ & $xy^{b_2-1}z^2$ & $xy^{b-1}z$ & -- \\
\hline
3-c & $x^2y^{b_2-1}z$ & $xy^{b_4-1}z^2$ & $xy^{b-1}z$ & -- \\
\hline
3-d & $x^2y^{b_2-1}z$ & $xy^{b_2-1}z^2$ & $xy^{b_2+b_4-1}z$ & -- \\
\hline
3-e & $x^2y^{b_4}z$ & $xy^{b_4-1}z^2$ & $xy^{b_2+b_4-1}z$ & -- \\
\hline
4-a & $x^2y^{b_1}z$ & $xyz^2$ & $xy^{b_2}z$ & -- \\
\hline
4-b & $x^2y^{b_2-1}z$ & $xy^{b_2-1}z^2$ & $xy^{b_2+b_4-1}z$ & $x^3y^{b_1-1}z$ $(*)$ \\
\hline
4-c & $x^2y^{b_2-1}z$ & $xy^{b_4-1}z^2$ & $xy^{b-1}z$ & $x^3y^{b_1-1}z$ $(*)$ \\
\hline
4-d & $x^2y^{b_2-1}z$ & $xy^{b_4-1}z^2$ & $xy^{b_2+b_4-1}z$ & $x^3y^{b_1-1}z$ $(*)$ \\
\hline
4-e & $x^2y^{b_1+b_4-1}z$ & $xy^{b_4-1}z^2$ & $xy^{b_2+b_4-1}z$ & $x^3y^{b_1-1}z$ $(*)$ \\
\hline
4-f & $x^2y^{b_2-1}z$ & $xy^{b_4-1}z^2$ & $xy^{b_2+b_4-1}z$ & $x^3y^{b_4-1}z$ \\
\hline
4-g & $x^2y^{b_1+b_4-1}z$ & $xy^{b_4-1}z^2$ & $xy^{b_2+b_4-1}z$ & $x^3y^{b_4-1}z$ \\
\hline
\multicolumn{5}{l}{$(*)$ Appears only when $b_1 \ge 2$} \\
\end{tabular}
\end{table}

\subsubsection{Finding $\lambda =(p,q,r)$ with $\ord_\lambda(g) < \ord_\lambda(I^2)$}

We proceed as in Section~3.3.3 to construct $\lambda = (p,q,r) \in \mathbb{N}^3$ for each candidate monomial $g$ 
in Table~\ref{tab:7gen_cases_candidates} such that $\ord_\lambda(g) < \ord_\lambda(I^2)$ for every monomial ideal $I$ of the form appearing in the corresponding cases

Throughout this subsubsection, we consider monomial ideals of the form
\[
I=(x^a, y^b, z^c, x^2y^{b_1}, xy^{b_2}, x^{a_3}z, y^{b_4}z),
\]
which appear in the classification given in the previous subsubsection.
Here $a,b,c,b_1, b_2, a_3, b_4$ are arbitrary positive integers satisfying the conditions of the corresponding case.
Note the following.

\begin{itemize}
	\item Always $\max\{b_1+1, 2b_1-1\} \le b_2 \le \ceil{\frac{b+b_1}{2}}$.
	\item If $a = 3$, then $1 \le b_1 \le \ceil{\frac{b}{3}}$. 
	\item If $a \ge 4$, then $b_1 = 1$.
	\item If $c = 2$, then $1 \le a_3 \le \ceil{\frac{a}{2}}$ and $1 \le b_4 \le \ceil{\frac{b}{2}}$.
	\item If $c \ge 3$, then $a_3 = b_4 = 1$.
\end{itemize}

Due to the increased complexity of the parameter ranges in the 
7-generator case, we summarize the choice of $\lambda$ in Table \ref{tab:lambda_7gen} at the end of this section.

\medskip

\noindent
\underline{$g = x^2y^{b_1}z$}:

We divide into two cases: $b_1 = 1$ and $b_1 \ge 2$.

\medskip

\noindent\textbf{When $b_1=1$.}

We put $\lambda = (1,1,2)$. Then $\ord_\lambda(g) = 5$, and we need $\ord_\lambda(I) \ge 3$.

\begin{center}
\begin{tabular}{|c||c|c|c|c|c|c|c|}
\hline
$f$ & $x^a$ & $y^b$ & $z^c$ & $x^2y^{b_1}$ & $xy^{b_2}$ & $x^{a_3}z$ & $y^{b_4}z$ \\
\hline
$\ord_\lambda(f)$ & $a$ & $b$ & $2c$ & \cellcolor{lightgray} $b_1 + 2=3$ & $b_2 + 1$ & $a_3 + 2$ & $b_4 + 2$\\
\hline
\end{tabular}
\end{center}

Since $a,b \ge 3$, $c\ge 2$, $b_2 \ge 2$, and $a_3, b_4 \ge 1$, we have $\ord_\lambda(I) \ge 3$.

\medskip

\noindent
\textbf{When $b_1\ge 2$.}

In this case, we always have $b_4 = 1$ by checking Tables \ref{tab:7gen_cases_params}, \ref{tab:7gen_cases_candidates}.

We put $\lambda = (3b_1-2, 3, 9b_1-9)$. Then $\ord_\lambda(g) = 18b_1 - 13$, and we need $\ord_\lambda(I) \ge 9b_1-6$.

\begin{center}
\begin{tabular}{|c||c|c|c|c|c}
\hline
$f$ & $x^a$ & $y^b$ & $z^c$ & $x^2y^{b_1}$ & $xy^{b_2}$ \\
\hline
$\ord_\lambda(f)$ & $a(3b_1-2)$ & $3b$ & $9c(b_1-1)$ & $9b_1-4$ & $3b_1+3b_2 - 2$ \\
\hline
\end{tabular}\hspace{6em}

\hspace{15em}
\begin{tabular}{c|c|}
\hline
$x^{a_3}z$ & $yz$ \\
\hline
$9b_1 + a_3(3b_1-2) -9$ & \cellcolor{lightgray} $9b_1-6$\\
\hline
\end{tabular}
\end{center}

Since $a\ge 3$, $c\ge 2$, $a_3 \ge 1$, and $b_1 \ge 2$, we obtain $a(3b_1-2), 9c(b_1-1), 9b_1 + a_3(3b_1-2) -9 \ge 9b_1-6$.
Moreover, $b_1 \le \lceil\frac{b}{3}\rceil$ implies $3b_1 \le b + 2$, hence $3b \ge 9b_1-6$.
Finally, $b_2 \ge 2b_1-1$ yields
$$3b_1 + 3b_2 -2 \ge 9b_1 - 5 > 9b_1-6.$$
Therefore, $\ord_\lambda(I) = 9b_1 - 6$.

\medskip
\noindent\rule{\textwidth}{0.4pt}
\medskip

\noindent
\underline{$g = xy^{b_2}z$}:

This monomial appears in Case 1, Subcases 2-a, 2-b, 3-a, and 4-a.

We first treat the case $b_2=2$.

\medskip

\noindent\textbf{When $b_2 = 2$.}

In this case, we have $b_1 = 1$.
This handles all of Subcases 2-a and 2-b, as well as parts of Case 1 and Subcases 3-a, and 4-a.

We put $\lambda = (1,1,2)$. Then $\ord_\lambda(g) = 5$, and the resulting table coincides with that for $g = x^2y^{b_1}z$ when $b_1 = 1$, and we omit the table here.

\medskip

\textit{Reduction Step.}
Since Subcases 2-a and 2-b are now complete, it remains to consider Case 1 and Subcases 3-a and 4-a.
In all these cases, we have $b_4 = 1$.
For the remaining cases, distinguish the following three possibilities:

	(i) $b_2 \ge 2b_1 + 2$  (ii) $b_2 \le 2b_1 + 1$ and $2b \ge 3b_2-1$ (iii) $b_2 \le 2b_1+1$ and $2b\le 3b_2-2$

\medskip

\noindent\textbf{When $b_2 \ge 2b_1 + 2$.}

We put $\lambda = (2b_2-2b_1-1, 2, 4b_2-2b_1-4)$. Then $\ord_\lambda(g) = 8b_2-4b_1-5$, and we need $\ord_\lambda(I) \ge 4b_2-2b_1-2$.

\begin{tabular}{|c||c|c|c|c|c}
\hline
$f$ & $x^a$ & $y^b$ & $z^c$ & $x^2y^{b_1}$ & $xy^{b_2}$  \\
\hline
$\ord_\lambda(f)$ & $a(2b_2-2b_1-1)$ & $2b$ & $2c(2b_2-b_1-2)$ & \cellcolor{lightgray}$4b_2-2b_1-2$ & $4b_2-2b_1-1$  \\
\hline
\end{tabular}\hspace{4em}

\hspace{13em}
\begin{tabular}{c|c|}
\hline
$x^{a_3}z$ & $yz$\\
\hline
$a_3(2b_2-2b_1-1) + (4b_2-2b_1-4)$ & \cellcolor{lightgray}$4b_2-2b_1-2$\\
\hline
\end{tabular}

By the assumption that $b_2 \ge 2b_1 + 2$, we have $a(2b_2-2b_1-1) \ge 6b_2 - 6b_1 -3 \ge 4b_2-2b_1 +1$ and $2c(2b_2 -b_1 - 2) \ge 8b_2 - 4b_1 - 8 > 4b_2-4 \ge 4b_2 - 2b_1 - 2$. Here, we use $a \ge 3$ and $c \ge 2$ also.
Moreover, $b_2 \le \lceil\frac{b+b_1}{2}\rceil$ implies $2b \ge 4b_2 - 2b_1 - 2$.
Finally, $a_3 \ge 1$ and $b_2-b_1 \ge 1$ yield $(2a_3+4)b_2 - (2a_3+2)b_1 -(a_3+4) \ge 4b_2 - 2b_1 - 2$.
Therefore, $\ord_\lambda(I) = 4b_2-2b_1-2$.

\medskip

\noindent\textbf{When $b_2 \le 2b_1+1$ and $2b \ge 3b_2-1$.}

We put $\lambda = (b_2, 2, 3b_2-3)$. Then $\ord_\lambda(g) = 6b_2-3$, and we need $\ord_\lambda(I) \ge 3b_2-1$.

\begin{center}
\begin{tabular}{|c||c|c|c|c|c|c|c|}
\hline
$f$ & $x^a$ & $y^b$ & $z^c$ & $x^2y^{b_1}$ & $xy^{b_2}$ & $x^{a_3}z$ & $yz$ \\
\hline
$\ord_\lambda(f)$ & $ab_2$ & $2b$ & $3c(b_2-1)$ & $2b_2+2b_1$ & $3b_2$ & $(a_3+3)b_2-3$ & \cellcolor{lightgray}$3b_2-1$ \\
\hline
\end{tabular}
\end{center}

Since $a \ge 3$, $c, b_2 \ge 2$, $a_3 \ge 1$, and the given conditions, we have $\ord_\lambda(I) = 3b_2 - 1$.

\medskip

\noindent\textbf{When $b_2 \le 2b_1+1$ and $2b \le 3b_2-2$.}

In this case, we claim that $b_1 = \lceil\frac{b}{3}\rceil$ and $b_2 = \lceil\frac{b+b_1}{2}\rceil$.
Indeed, if $b_1 < \lceil\frac{b}{3}\rceil$, then $3b_1 \le b-1$ and $2b_2 \le b+b_1+1$, which imply $6b_2 \le 4b+2 \le 6b_2-2$, a contradiction.
Similarly, if $b_2 < \lceil\frac{b+b_1}{2}\rceil$, then $2b_2 \le b+b_1-1$, which gives an impossible inequality $4b+4 \le 6b_2 \le 4b-1$.

We put $\lambda = (b, 3, 3b-3)$. Then $\ord_\lambda(g) = 4b+3b_2-3 \ge 6b-1$, and it suffices to have $\ord_\lambda(I) \ge 3b$.

\begin{center}
\begin{tabular}{|c||c|c|c|c|c|c|c|}
\hline
$f$ & $x^a$ & $y^b$ & $z^c$ & $x^2y^{b_1}$ & $xy^{b_2}$ & $x^{a_3}z$ & $yz$ \\
\hline
$\ord_\lambda(f)$ & $ab$ & \cellcolor{lightgray}$3b$ & $3c(b-1)$ & $2b+3b_1$ & $b+3b_2$ & $a_3b+3b-3$ & \cellcolor{lightgray}$3b$ \\
\hline
\end{tabular}
\end{center}

Using $b_1 = \lceil\frac{b}{3}\rceil$ and $b_2 = \lceil\frac{b+b_1}{2}\rceil$, we have $2b + 3b_1 \ge 3b$ and $b+3b_2 \ge 3b + 2 > 3b$.
The remaining values are easily verified to be at least $3b$. Hence $\ord_\lambda(I) = 3b$.

\medskip

\textit{Reduction Step.}
Since all candidate monomials $g$ appearing in Case~1 have been handled, 
we may assume $c = 2$ in what follows.

\medskip
\noindent\rule{\textwidth}{0.4pt}

\medskip

\noindent
\underline{$g = xyz^2$}:

We divide into two cases: $a_3=1$ and $a_3 \ge 2$.

\medskip
\noindent\textbf{When $a_3=1$.}

This occurs only in Subcase~2-a (see Table~\ref{tab:7gen_cases_candidates}),
where $b=3$, $b_1=1$, and $b_2=b_4=2$.

We put $\lambda = (3,2,3)$. Then $\ord_\lambda(g) = 11$, and we need $\ord_\lambda(I) \ge 6$.

\begin{center}
\begin{tabular}{|c||c|c|c|c|c|c|c|}
\hline
$f$ & $x^a$ & $y^3$ & $z^2$ & $x^2y$ & $xy^2$ & $xz$ & $yz^2$ \\
\hline
$\ord_\lambda(f)$ & $3a$ & \cellcolor{lightgray}$6$ & \cellcolor{lightgray}$6$ & $8$ & $7$ & \cellcolor{lightgray}$6$ & $8$\\
\hline
\end{tabular}
\end{center}

Since $a \ge 3$, we have $\ord_\lambda(I) = 6$.

\medskip

\noindent\textbf{When $a_3\ge 2$.}

Since $a_3 \ge 2$, we have $b_4 = 1$ by Theorem \ref{thm:7gens}.

We put $\lambda = (2,3,3)$. Then $\ord_\lambda(g) = 11$, and we need $\ord_\lambda(I) \ge 6$.

\begin{center}
\begin{tabular}{|c||c|c|c|c|c|c|c|}
\hline
$f$ & $x^a$ & $y^b$ & $z^2$ & $x^2y^{b_1}$ & $xy^{b_2}$ & $x^{a_3}z$ & $yz$ \\
\hline
$\ord_\lambda(f)$ & $2a$ & $3b$ & \cellcolor{lightgray}$6$ & $3b_1 + 4$ & $3b_2 + 2$ & $2a_3 + 3$ & \cellcolor{lightgray}$6$\\
\hline
\end{tabular}
\end{center}

Since $a, b \ge 3$, $b_1 \ge 1$, and $b_2, a_3 \ge 2$, all values in the table
are at least $6$, and hence $\ord_\lambda(I) = 6$.

\medskip
\noindent\rule{\textwidth}{0.4pt}
\medskip

\noindent
\underline{$g = x^{a_3+1}yz$}:

We divide into two cases: $2a_3 = a+1$ and $2a_3 \le a$.
Note that $a_3 \le \lceil\frac{a}{2}\rceil$ implies $2a_3 \le a+1$.
This monomial appears in Subcases 2-b and 3-a, where we always have $a\ge 4$, $b_1=1$, $a_3 \ge 2$, and $b_4 = 1$.

\medskip

\noindent\textbf{When $2a_3 =a+1$.}

We put $\lambda = (2, 2a-4, a)$. Then $\ord_\lambda(g) = 4a -1$, and we need $\ord_\lambda(I) \ge 2a$.

\begin{center}
\begin{tabular}{|c||c|c|c|c|c|c|c|}
\hline
$f$ & $x^a$ & $y^b$ & $z^2$ & $x^2y$ & $xy^{b_2}$ & $x^{a_3}z$ & $yz$ \\
\hline
$\ord_\lambda(f)$ & \cellcolor{lightgray}$2a$ & $b(2a-4)$ & \cellcolor{lightgray}$2a$ & \cellcolor{lightgray}$2a$ & $b_2(2a - 4) + 2$ & $2a + 1$ & $3a-4$\\
\hline
\end{tabular}
\end{center}

Since $a \ge 4$, $b\ge 3$, and $b_2 \ge 2$, we have $\ord_\lambda(I) = 2a$.

\medskip

\noindent\textbf{When $2a_3 \le a$.}

We put $\lambda = (1, 2a_3-2, a_3)$. Then $\ord_\lambda(g) = 4a_3 -1$, and we need $\ord_\lambda(I) \ge 2a_3$.

\begin{center}
\begin{tabular}{|c||c|c|c|c|c|c|c|}
\hline
$f$ & $x^a$ & $y^b$ & $z^2$ & $x^2y$ & $xy^{b_2}$ & $x^{a_3}z$ & $yz$ \\
\hline
$\ord_\lambda(f)$ & $a$ & $b(2a_3-2)$ & \cellcolor{lightgray}$2a_3$ & \cellcolor{lightgray}$2a_3$ & $b_2(2a_3 -2) + 2$ & \cellcolor{lightgray}$2a_3$ & $3a_3 -2$\\
\hline
\end{tabular}
\end{center}

Since $a \ge 2a_3$, $a_3 \ge 2$, and $b_2 \ge 2$, we have $\ord_\lambda(I) = 2a_3$.

\medskip
\noindent\rule{\textwidth}{0.4pt}
\medskip

\noindent
\underline{$g = xy^{b_2-1}z^2$}:

We divide into two cases: $b \ge b_2 + b_4$ and $b < b_2 + b_4$.
This monomial appears in Subcases 3-b, 3-d, and 4-b, where we always have $b\ge 4$, $a_3 = 1$, and $b_4 \ge b_2$.

\medskip

\noindent\textbf{When $b \ge b_2 + b_4$.}

We put $\lambda = (b_2, 1, b_2)$. Then $\ord_\lambda(g) = 4b_2-1$, and we need $\ord_\lambda(I) \ge 2b_2$.

\begin{center}
\begin{tabular}{|c||c|c|c|c|c|c|c|}
\hline
$f$ & $x^a$ & $y^b$ & $z^2$ & $x^2y^{b_1}$ & $xy^{b_2}$ & $xz$ & $y^{b_4}z$ \\
\hline
$\ord_\lambda(f)$ & $ab_2$ & $b$ & \cellcolor{lightgray}$2b_2$ & $2b_2+b_1$ & \cellcolor{lightgray}$2b_2$ & \cellcolor{lightgray}$2b_2$ & $b_2 + b_4$\\
\hline
\end{tabular}
\end{center}

Since $b\ge b_2 + b_4 \ge 2b_2$, $a \ge 4$, and $b_1 \ge 1$, we have $\ord_\lambda(I) = 2b_2$.

\medskip

\noindent\textbf{When $b < b_2 + b_4$.}

In this case, $b$ is odd and $b_2 = b_4 = \frac{b+1}{2}$ (See Subcase 3-b).

We put $\lambda = (b,2,b)$. Then $\ord_\lambda(g) = 4b - 1$, and we need $\ord_\lambda(I) \ge 2b$.

\begin{center}
\begin{tabular}{|c||c|c|c|c|c|c|c|}
\hline
$f$ & $x^a$ & $y^b$ & $z^2$ & $x^2y^{b_1}$ & $xy^{b_2}$ & $xz$ & $y^{b_4}z$ \\
\hline
$\ord_\lambda(f)$ & $ab$ & \cellcolor{lightgray}$2b$ & \cellcolor{lightgray}$2b$ & $2b+2b_1$ & $2b+1$ & \cellcolor{lightgray}$2b$ & $2b+1$\\
\hline
\end{tabular}
\end{center}

Since $a \ge 4$ and $b_1 = 1$, we have $\ord_\lambda(I) = 2b$.

\medskip
\noindent\rule{\textwidth}{0.4pt}
\medskip

\noindent
\underline{$g = xy^{b_4-1}z^2$}:

This monomial appears in Subcases 3-c, 3-e, 4-c, 4-d, 4-e, 4-f, and 4-g.
In all these cases, we have $a_3 = 1$, $b_2 \ge b_4 \ge 2$, and $b \ge b_2 + b_4$.

We put $\lambda = (2b_4-1, 2, 2b_4-1)$. Then $\ord_\lambda(g) = 8b_4 -5$, and we need $\ord_\lambda(I) \ge 4b_4-2$.

\begin{center}
\begin{tabular}{|c||c|c|c|c|c|c|c|}
\hline
$f$ & $x^a$ & $y^b$ & $z^2$ & $x^2y^{b_1}$ & $xy^{b_2}$ & $xz$ & $y^{b_4}z$ \\
\hline
$\ord_\lambda(f)$ & $a(2b_4-1)$ & $2b$ & \cellcolor{lightgray}$4b_4-2$ & $4b_4 + 2b_1 -2$ & $2b_4 + 2b_2 -1$ & \cellcolor{lightgray}$4b_4-2$ & $4b_4-1$\\
\hline
\end{tabular}
\end{center}

Since $a \ge 3$, $b_4 \ge 2$, $b \ge b_2 + b_4$, $b_2 \ge b_4$, and $b_1 \ge 1$, we have $\ord_\lambda(I) = 4b_4-2$.

\medskip
\noindent\rule{\textwidth}{0.4pt}
\medskip

\noindent
\underline{$g = x^2y^{b_4}z$}:

This monomial appears only in Subcase 3-e.

We put $\lambda = (b_4, 1, b_4+1)$. Then $\ord_\lambda(g) = 4b_4 +1$, and we need $\ord_\lambda(I) \ge 2b_4+1$.

\begin{center}
\begin{tabular}{|c||c|c|c|c|c|c|c|}
\hline
$f$ & $x^a$ & $y^b$ & $z^2$ & $x^2y$ & $xy^{b_2}$ & $xz$ & $y^{b_4}z$ \\
\hline
$\ord_\lambda(f)$ & $ab_4$ & $b$ & $2b_4+2$ & \cellcolor{lightgray}$2b_4 + 1$ & $b_4 + b_2$ & \cellcolor{lightgray}$2b_4+1$ & \cellcolor{lightgray}$2b_4+1$\\
\hline
\end{tabular}
\end{center}

In Subcase 3-e, we assume $a\ge 4$, $b \ge b_2 + b_4$, $b_1 = 1$, and $b_2 > b_4$. Hence we have $\ord_\lambda(I) = 2b_4 +1$.

\medskip
\noindent\rule{\textwidth}{0.4pt}
\medskip

\noindent
\underline{$g = x^3y^{b_1-1}z$}:

This monomial appears in Subcases 4-b, 4-c, 4-d, and 4-e when $b_1 \ge 2$.
Hence we may assume $a=3$, $a_3 = 1$, and $2 \le b_1 \le b_4$.

We put $\lambda = (3b_1 -2 , 3, 6b_1-4)$. Then $\ord_\lambda(g) = 18b_1-13$, and we need $\ord_\lambda(I) \ge 9b_1-6$.

\begin{center}
\begin{tabular}{|c||c|c|c|c|c|c|c|}
\hline
$f$ & $x^3$ & $y^b$ & $z^2$ & $x^2y^{b_1}$ & $xy^{b_2}$ & $xz$ & $y^{b_4}z$ \\
\hline
$\ord_\lambda(f)$ & \cellcolor{lightgray}$9b_1-6$ & $3b$ & $12b_1-8$ & $9b_1-4$ & $3b_1+3b_2-2$ & \cellcolor{lightgray}$9b_1-6$ & $6b_1 + 3b_4 - 4$\\
\hline
\end{tabular}
\end{center}

Since $b_1 \le \lceil\frac{b}{3}\rceil$, we have $3b_1 \le b+2$, hence $3b \ge 9b_1-6$.
Moreover, $b_2 \ge 2b_1 - 1$ yields $3b_1 + 3b_2-2 \ge 9b_1 - 5 > 9b_1-6$.
Finally, $b_1 \le b_4$ implies $6b_1 + 3b_4 -4 \ge 9b_1 -4 > 9b_1-6$.
Therefore, $\ord_\lambda(I) = 9b_1-6$.

\medskip
\noindent\rule{\textwidth}{0.4pt}
\medskip

\noindent
\underline{$g = x^3y^{b_4-1}z$}:

This monomial appears only in Subcases 4-f and 4-g, where we have $a=3$, $a_3 = 1$, $2 \le b_4 < b_1 < b_2$, and $b_2 + b_4 \le b$.

We put $\lambda = (3b_4-2, 3, 6b_4-4)$. Then $\ord_\lambda(g) = 18b_4 - 13$, and we need $\ord_\lambda(I) \ge 9b_4-6$.

\begin{center}
\begin{tabular}{|c||c|c|c|c|c|c|c|}
\hline
$f$ & $x^3$ & $y^b$ & $z^2$ & $x^2y^{b_1}$ & $xy^{b_2}$ & $xz$ & $y^{b_4}z$ \\
\hline
$\ord_\lambda(f)$ & \cellcolor{lightgray}$9b_4-6$ & $3b$ & $12b_4-8$ & $6b_4 + 3b_1-4$ & $3b_2 + 3b_4-2$ & \cellcolor{lightgray}$9b_4-6$ & $9b_4 - 4$\\
\hline
\end{tabular}
\end{center}

Since $b \ge b_2 + b_4$, $b_4 < b_1 < b_2$, and $b_2 \ge 2b_1 -1$, we have $\ord_\lambda(I) = 9b_4-6$.

\medskip
\noindent\rule{\textwidth}{0.4pt}
\medskip

\noindent
\underline{$g = xy^{b-1}z$}:

This monomial appears in Subcases 3-b, 3-c, and 4-c, where we have $a_3 = 1$, $b_4 \ge 2$, and $b < b_2 + b_4$.
We divide into two cases: $b_2 > b_4$ and $b_2 = b_4$.
Since $b_2 \ge b_4$ always holds in these cases, these two cases cover all possibilities.

\medskip

\noindent\textbf{When $b_2 > b_4$.}

We put $\lambda = (b-b_2 + 1, 1, b_2-1)$. Then $\ord_\lambda(g) = 2b-1$, and we need $\ord_\lambda(I) \ge b$.

\begin{center}
\begin{tabular}{|c||c|c|c|c|c|c|c|}
\hline
$f$ & $x^a$ & $y^b$ & $z^2$ & $x^2y^{b_1}$ & $xy^{b_2}$ & $xz$ & $y^{b_4}z$ \\
\hline
$\ord_\lambda(f)$ & $a(b-b_2+1)$ & \cellcolor{lightgray}$b$ & $2b_2-2$ & $2(b-b_2+1)+b_1$ & $b+1$ & \cellcolor{lightgray}$b$ & $b_4+b_2-1$ \\
\hline
\end{tabular}
\end{center}

Since $b_2 \le \lceil\frac{b+b_1}{2}\rceil$, we have $2b_2 \le b+b_1 + 1$, hence
$$2(b-b_2+1)+b_1 \ge b + 1.$$
Moreover, $2b_1 -1 \le b_2$ yields
$$a(b-b_2+1) \ge 3(b-b_2+1) \ge b.$$
Finally, the assumption $b < b_2 + b_4$ and $b_2 > b_4$ imply $2b_2 -2 \ge b_2 + b_4 -1 \ge b$.
Therefore, $\ord_\lambda(I) = b$.

\medskip

\noindent\textbf{When $b_2 = b_4$.}

We put $\lambda = (b, 2, b+1)$. Then $\ord_\lambda(g) = 4b-1$, and we need $\ord_\lambda(I) \ge 2b$.

\begin{center}
\begin{tabular}{|c||c|c|c|c|c|c|c|}
\hline
$f$ & $x^a$ & $y^b$ & $z^2$ & $x^2y^{b_1}$ & $xy^{b_2}$ & $xz$ & $y^{b_2}z$ \\
\hline
$\ord_\lambda(f)$ & $ab$ & \cellcolor{lightgray}$2b$ & $2b+2$ & $2b+2b_1$ & $b+2b_2$ & $2b+1$ & $2b_2+(b+1)$ \\
\hline
\end{tabular}
\end{center}

Since $a \ge 3$ and $b < b_2+b_4 = 2b_2$, we have $\ord_\lambda(I) = 2b$.

\medskip
\noindent\rule{\textwidth}{0.4pt}
\medskip

\noindent
\underline{$g = x^2y^{b_1+b_4-1}z$}:

We divide into two cases: $b_1 \le b_4$ and $b_1 > b_4$.
This monomial appears in Subcases 4-e and 4-g, where we have $a=3$, $a_3 = 1$, $b \ge b_2 + b_4$, and $b_2 \ge b_1 + b_4$.

\medskip

\noindent\textbf{When $b_1 \le b_4$.}

We put $\lambda = (b_4, 1, b_1+b_4)$. Then $\ord_\lambda(g) = 2b_1 + 4b_4-1$, and we need $\ord_\lambda(I) \ge b_1 + 2b_4$.

\begin{center}
\begin{tabular}{|c||c|c|c|c|c|c|c|}
\hline
$f$ & $x^3$ & $y^b$ & $z^2$ & $x^2y^{b_1}$ & $xy^{b_2}$ & $xz$ & $y^{b_4}z$ \\
\hline
$\ord_\lambda(f)$ & $3b_4$ & $b$ & $2b_1+2b_4$ & \cellcolor{lightgray}$b_1 + 2b_4$ & $b_2 + b_4$ & \cellcolor{lightgray}$b_1 + 2b_4$ & \cellcolor{lightgray}$b_1+2b_4$ \\
\hline
\end{tabular}
\end{center}

Since $b_1 \le b_4$, $b \ge b_2 + b_4$, and $b_2 \ge b_1 + b_4$, we have $\ord_\lambda(I) = b_1 + 2b_4$.

\medskip

\noindent\textbf{When $b_1 > b_4$.}

We put $\lambda = (3b_1 -2, 3, 9b_1-3b_4-6)$. Then $\ord_\lambda(g) = 18b_1 - 13$, and we need $\ord_\lambda(I) \ge 9b_1-6$.

\begin{center}
\begin{tabular}{|c||c|c|c|c|c|c|c|}
\hline
$f$ & $x^3$ & $y^b$ & $z^2$ & $x^2y^{b_1}$ & $xy^{b_2}$ & $xz$ & $y^{b_4}z$ \\
\hline
$\ord_\lambda(f)$ & \cellcolor{lightgray}$9b_1-6$ & $3b$ & $18b_1-6b_4-12$ & $9b_1-4$ & $3b_1+3b_2-2$ & $12b_1-3b_4-8$ & \cellcolor{lightgray}$9b_1-6$ \\
\hline
\end{tabular}
\end{center}

Since $b_1 \le \lceil\frac{b}{3}\rceil$, we have $3b_1 \le b+2$, hence $3b \ge 9b_1-6$.
Moreover, $2b_1 - 1 \le b_2$ yields $3b_1 + 3b_2 -2 \ge 9b_1 -5 > 9b_1-6$.
Finally, $b_1 > b_4$ implies $18b_1- 6b_4-12, 12b_1-3b_4-8 \ge 9b_1-6$.
Therefore, $\ord_\lambda(I) = 9b_1-6$.

\medskip
\noindent\rule{\textwidth}{0.4pt}
\medskip

\noindent
\underline{$g = x^2y^{b_2-1}z$}:

This monomial appears in Subcases 3-b, 3-c, 3-d, 4-b, 4-c, 4-d, and 4-f.
We first verify that $b_2 \le b_1 + b_4$ always holds in these cases.
In Subcases 4-d and 4-f, we already assume $b_2 \le b_1 + b_4$.
In Subcases 3-b, 3-c, 3-d, 4-b, we assume $b_2 \le b_4$, whence obviously $b_2 \le b_4 < b_1 + b_4$.
For Subcase 4-c, the assumption $b < b_2 + b_4$ yields $2b_2 \le b + b_1 + 1 \le b_2 + b_1 + b_4$, hence $b_2 \le b_1 + b_4$.

We divide into five cases.

\medskip

\noindent\textbf{When $b_2 \le b_4$.}

We put $\lambda = (b_2-1, 1, b_2)$. Then $\ord_\lambda(g) = 4b_2-3$, and we need $\ord_\lambda(I) \ge 2b_2-1$.

\begin{center}
\begin{tabular}{|c||c|c|c|c|c|c|c|}
\hline
$f$ & $x^a$ & $y^b$ & $z^2$ & $x^2y^{b_1}$ & $xy^{b_2}$ & $xz$ & $y^{b_4}z$ \\
\hline
$\ord_\lambda(f)$ & $a(b_2-1)$ & $b$ & $2b_2$ & $2b_2+b_1-1$ & \cellcolor{lightgray}$2b_2-1$ & \cellcolor{lightgray}$2b_2-1$ & $b_4+b_2$ \\
\hline
\end{tabular}
\end{center}

Since $b_4 \le \lceil\frac{b}{2}\rceil$, we have $2b_2 \le 2b_4 \le b+1$. Hence $\ord_\lambda(I) = 2b_2-1$ by $a\ge 3$ and $b_2 \le b_4$.

\medskip

\textit{Reduction Step.} In what follows, we assume $b_2 > b_4$.

\medskip

\noindent\textbf{When $b_2 > b_4$ and $b_2 \ge 2b_1$.}

We put $\lambda = (3b_2 - 3b_1 - 1, 3, 3b_2-2)$. Then $\ord_\lambda(g) = 12b_2 - 6b_1 - 7$, and we need $\ord_\lambda(I) \ge 6b_2-3b_1-3$.

\begin{tabular}{|c||c|c|c|c}
\hline
$f$ & $x^a$ & $y^b$ & $z^2$ & $x^2y^{b_1}$ \\
\hline
$\ord_\lambda(f)$ & $a(3b_2 - 3b_1 - 1)$ & $3b$ & $6b_2-4$ & $6b_2- 3b_1-2$  \\
\hline
\end{tabular}

\hspace{13em}
\begin{tabular}{c|c|c|}
\hline
$xy^{b_2}$ & $xz$ & $y^{b_4}z$\\
\hline
$6b_2-3b_1-1$ & \cellcolor{lightgray}$6b_2-3b_1-3$ & $3b_2+3b_4-2$\\
\hline
\end{tabular}

Since $b_2 \le \lceil\frac{b+b_1}{2}\rceil$, we have $2b_2 \le b+b_1+1$, hence $3b \ge 6b_2 - 3b_1 -3$.
Moreover, $b_2 \le b_1 + b_4$ yields $3b_4 + 3b_2 -2 \ge 6b_2 - 3b_1 -2$.
Therefore, $\ord_\lambda(I) = 6b_2 - 3b_1 -3$.

\medskip

\textit{Reduction Step.} 
In what follows, we assume $b_2 \le 2b_1-1$.
Since $2b_1 -1 \le b_2$ holds by the original assumption, we have $b_2 = 2b_1 - 1$.
Combined with $b_2 \le b_1 + b_4$, this yields $b_1 -1 \le b_4$ and hence $b_2 \le 2b_4+1$.

\medskip

\noindent\textbf{When $b_4 < b_2 = 2b_1-1$ and $b \ge b_2 + b_4 + 1$.}

We put $\lambda = (b_2, 2, 2b_2+1)$. Then $\ord_\lambda(g) = 6b_2 -1$, and we need $\ord_\lambda(I) \ge 3b_2$.

\begin{center}
\begin{tabular}{|c||c|c|c|c|c|c|c|}
\hline
$f$ & $x^a$ & $y^b$ & $z^2$ & $x^2y^{b_1}$ & $xy^{b_2}$ & $xz$ & $y^{b_4}z$ \\
\hline
$\ord_\lambda(f)$ & $ab_2$ & $2b$ & $4b_2+2$ & $2b_2 + 2b_1$ & \cellcolor{lightgray}$3b_2$ & $3b_2+1$ & $2b_2 + 2b_4 + 1$ \\
\hline
\end{tabular}
\end{center}

Since $b_2 = 2b_1 - 1$, we have $2b_2 + 2b_1 = 3b_2 +1$.
Moreover, $b_2 \le 2b_4 + 1$ yields $2b_2 + 2b_4 + 1 \ge 3b_2$.
Finally, $b \ge b_2 + b_4 + 1$ implies $2b \ge 3b_2 + 1$.
Therefore, $\ord_\lambda(I) = 3b_2$.

\medskip

\noindent\textbf{When $b_4 < b_2 = 2b_1-1$ and $b \le b_2 + b_4$.}

We put $\lambda = (b, 3, 3b_2)$. Then $\ord_\lambda(g) = 2b + 6b_2 - 3$, and we need $\ord_\lambda(I) \ge b+3b_2-1$.

\begin{center}
\begin{tabular}{|c||c|c|c|c|c|c|c|}
\hline
$f$ & $x^a$ & $y^b$ & $z^2$ & $x^2y^{b_1}$ & $xy^{b_2}$ & $xz$ & $y^{b_4}z$ \\
\hline
$\ord_\lambda(f)$ & $ab$ & $3b$ & $6b_2$ & $2b + 3b_1$ & \cellcolor{lightgray}$b + 3b_2$ & \cellcolor{lightgray}$b + 3b_2$ & $3b_2 + 3b_4$ \\
\hline
\end{tabular}
\end{center}

Since $b_1 \le \lceil\frac{b}{3}\rceil$, we have $3b_1 \le b+2$, which yields
$$2b + 3b_1 \ge 3b \ge b+6b_1-4 = b+3b_2-1$$
by $2b_1 -1 = b_2$.
Moreover, $b \le b_2 + b_4$ and $b_2 > b_4$ imply $6b_2 > 3b_2 + 3b_4 \ge 3b$.
Therefore, $\ord_\lambda(I) = b + 3b_2 > b+3b_2-1$.

\medskip
\noindent\rule{\textwidth}{0.4pt}
\medskip

\noindent
\underline{$g = xy^{b_2+b_4-1}z$}:

This monomial appears in Subcases 3-d, 3-e, 4-b, 4-d, 4-e, 4-f, and 4-g, where we always have $b\ge 4$, $a_3=1$, and $b \ge b_2 + b_4$.
We divide into eight cases.

\medskip

\noindent\textbf{When $b_2 < b_4$.}

We put $\lambda = (4b_4-2b_2-2, 2, 2b_4-1)$. Then $\ord_\lambda(g) = 8b_4-5$, and we need $\ord_\lambda(I) \ge 4b_4-2$.

\begin{center}
\begin{tabular}{|c||c|c|c|c|c}
\hline
$f$ & $x^a$ & $y^b$ & $z^2$ & $x^2y^{b_1}$ & $xy^{b_2}$ \\
\hline
$\ord_\lambda(f)$ & $a(4b_4-2b_2-2)$ & $2b$ & \cellcolor{lightgray}$4b_4-2$ & $8b_4-4b_2+2b_1-4$ & \cellcolor{lightgray}$4b_4-2$ \\
\hline
\end{tabular}
\hspace{5em}

\hspace{20em}
\begin{tabular}{c|c|}
\hline
$xz$ & $y^{b_4}z$\\
\hline
$6b_4-2b_2-3$ & $4b_4-1$\\
\hline
\end{tabular}
\end{center}

Since $a\ge 3$ and $b_2 < b_4$, we have
$$a(4b_4-2b_2-2) \ge 6b_4, \quad 8b_4-4b_2 + 2b_1 -4 \ge 4b_4 + 2b_1, \quad 6b_4 - 2b_2 -3 \ge 4b_4-1.$$
Moreover, $b_4 \le \lceil\frac{b}{2}\rceil$ yields $2b \ge 4b_4-2$.
Therefore, $\ord_\lambda(I) = 4b_4-2$.

\medskip

\textit{Reduction Step:} 
In what follows, we may assume $b_2 \ge b_4$.

\medskip

\noindent\textbf{When $b_2 \ge b_4 \ge b_2-1$.}

We put $\lambda = (b_2 + b_4 - 1, 2, b_2 + b_4)$. Then $\ord_\lambda(g) = 4b_2+4b_4-3$, and we need $\ord_\lambda(I) \ge 2b_2+2b_4-1$.

\begin{tabular}{|c||c|c|c|c}
\hline
$f$ & $x^a$ & $y^b$ & $z^2$ & $x^2y^{b_1}$  \\
\hline
$\ord_\lambda(f)$ & $a(b_2+b_4-1)$ & $2b$ & $2b_2+2b_4$ & $2b_1 + 2b_2+2b_4-2$   \\
\hline
\end{tabular}

\hspace{18em}
\begin{tabular}{c|c|c|}
\hline
$xy^{b_2}$ & $xz$ & $y^{b_4}z$\\
\hline
$3b_2+b_4-1$ & \cellcolor{lightgray}$2b_2+2b_4-1$ & $b_2+3b_4$\\
\hline
\end{tabular}

Since $a\ge 3$, $b \ge b_2 + b_4$, and $b_2 \ge b_4 \ge b_2-1$, we have $\ord_\lambda(I) = 2b_2 + 2b_4 -1$.

\medskip

\textit{Reduction Step.} In what follows, we assume $b_2-2 \ge b_4$.

\medskip

\noindent\textbf{When $b_2 -2 \ge b_4$ and $a \ge 4$.}

Note that $b\ge 4$ holds for this $g$.
Hence $b_1 = 1$.

We put $\lambda = (2b_2-3, 2, 4b_2-2b_4-4)$. Then $\ord_\lambda(g) = 8b_2-9$, and we need $\ord_\lambda(I) \ge 4b_2-4$.

\begin{center}
\begin{tabular}{|c||c|c|c|c|c|c|c|}
\hline
$f$ & $x^a$ & $y^b$ & $z^2$ & $x^2y$ & $xy^{b_2}$ & $xz$ & $y^{b_4}z$ \\
\hline
$\ord_\lambda(f)$ & $a(2b_2-3)$ & $2b$ & $8b_2-4b_4-8$ & \cellcolor{lightgray}$4b_2-4$ & $4b_2-3$ & $6b_2-2b_4-7$ & \cellcolor{lightgray}$4b_2-4$ \\
\hline
\end{tabular}
\end{center}

Since $a \ge 4$ and $b_2 \ge 2$, we have $a(2b_2 - 3) \ge 4b_2 -4$.
Moreover, $b_2-2 \ge b_4$ yields $8b_2- 4b_4-8 \ge 4b_2$ and $6b_2-2b_4 -7 \ge 4b_2 -3$.
Finally, $b_2 \le \lceil\frac{b+1}{2}\rceil$ implies $2b \ge 4b_2 - 4$.
Therefore, $\ord_\lambda(I) = 4b_2-4$.

\medskip

\textit{Reduction Step.} 
In what follows, we assume $a=3$ and $b_2-2 \ge b_4$. 

\medskip

\noindent\textbf{When $b_2-2 \ge b_4$, $b_1 \le b_4$, and $b_2 \le b_1 + b_4$.}

We put $\lambda = (b_4, 1, b_2)$. Then $\ord_\lambda(g) = 2b_2+2b_4-1$, and we need $\ord_\lambda(I) \ge b_2+b_4$.

\begin{center}
\begin{tabular}{|c||c|c|c|c|c|c|c|}
\hline
$f$ & $x^3$ & $y^b$ & $z^2$ & $x^2y^{b_1}$ & $xy^{b_2}$ & $xz$ & $y^{b_4}z$ \\
\hline
$\ord_\lambda(f)$ & $3b_4$ & $b$ & $2b_2$ & $b_1+2b_4$ & \cellcolor{lightgray}$b_2 + b_4$ & \cellcolor{lightgray}$b_2 + b_4$ & \cellcolor{lightgray}$b_2 + b_4$ \\
\hline
\end{tabular}
\end{center}

Recall that $b \ge b_2 + b_4$ always holds for this $g$.
Since $b_4 \ge b_1$ and $b_2 \le b_1 + b_4$, we have $3b_4 \ge b_2 + b_4$.
Moreover, $b_2 -2 \ge b_4$ yields $2b_2 \ge b_2 + b_4 + 2$.
Therefore, $\ord_\lambda(I) = b_2 + b_4$.

\medskip

\noindent\textbf{When $b_2-2 \ge b_4$, $b_1 \le b_4$ and $b_2 \ge b_1 + b_4 + 1$.}

Note that $b_2 \ge b_1 + b_4 + 1$ obviously implies $b_2 - 2 \ge b_4$.
Thus, in this case, the condition $b_2 - 2 \ge b_4$ is redundant.
Nevertheless, we keep this condition in the statement in order to make it clear
that the case division exhausts all possible situations.

We put $\lambda = (3b_2-3b_1-2, 3, 6b_2-3b_1-3b_4-4)$. Then $\ord_\lambda(g) = 12b_2-6b_1-9$, and we need $\ord_\lambda(I) \ge 6b_2-3b_1-4$.

\begin{tabular}{|c||c|c|c|c}
\hline
$f$ & $x^3$ & $y^b$ & $z^2$ & $x^2y^{b_1}$  \\
\hline
$\ord_\lambda(f)$ & $9b_2-9b_1-6$ & $3b$ & $12b_2-6b_1-6b_4-8$ & \cellcolor{lightgray}$6b_2-3b_1-4$   \\
\hline
\end{tabular}

\hspace{13em}
\begin{tabular}{c|c|c|}
\hline
$xy^{b_2}$ & $xz$ & $y^{b_4}z$\\
\hline
$6b_2-3b_1-2$ & $9b_2-6b_1-3b_4-6$ & \cellcolor{lightgray}$6b_2-3b_1-4$\\
\hline
\end{tabular}

Since $b_2 \le \ceil{\frac{b+b_1}{2}}$, we have $3b \ge 6b_2-3b_1-3$. 
Independently, using $b_1 \le b_4$ and $b_2 \ge b_1 + b_4 + 1$,
we easily verify that the remaining candidates in the table above
are all not less than $6b_2-3b_1-4$.
Therefore $\ord_\lambda(I) = 6b_2-3b_1-4$.

\medskip

\textit{Reduction Step.} 
We now consider the case $b_1 > b_4$, which we further divide into three subcases: $b_2 \ge 2b_1 + 1$, $b_2 = 2b_1$, and $b_2 = 2b_1-1$.

\medskip

\noindent\textbf{When $b_2-2 \ge b_4$, $b_1 > b_4$, and $b_2 \ge 2b_1 + 1$.}

We put $\lambda = (3b_2-3b_1-2, 3, 6b_2-3b_1-3b_4-4)$ as in the previous case.
Hence $\ord_\lambda(g) = 12b_2 - 6b_1-9$, and the table is identical to the one above.

Since $b_2 \ge 2b_1 +1$, we have $9b_2 - 9b_1 -6 \ge 6b_2-3b_1-3 > 6b_2-3b_1-4$.
Similarly, $12b_2-6b_1 -6b_4 -8 > 6b_2-3b_1-4$.
Moreover, $b_2 \le \lceil\frac{b+b_1}{2}\rceil$ yields $3b \ge 6b_2-3b_1-3$.
Therefore, $\ord_\lambda(I) = 6b_2-3b_1-4$.

\medskip

\noindent\textbf{When $b_2-2 \ge b_4$, $b_1 > b_4$, and $b_2 = 2b_1$.}

First, we note that $b \ge 3b_1-1$, since $b_2 \le \ceil{\frac{b+b_1}{2}}$ and $b_2 = 2b_1$.

We put $\lambda = (3b_1 - 1, 3, 9b_1-3b_4-3)$. Then $\ord_\lambda(g) = 18b_1-7$, and we need $\ord_\lambda(I) \ge 9b_1-3$.

\begin{center}
\begin{tabular}{|c||c|c|c|c|c|c|c|}
\hline
$f$ & $x^3$ & $y^b$ & $z^2$ & $x^2y^{b_1}$ & $xy^{b_2}$ & $xz$ & $y^{b_4}z$ \\
\hline
$\ord_\lambda(f)$ & \cellcolor{lightgray}$9b_1-3$ & $3b$ & $18b_1-6b_4-6$ & $9b_1-2$ & $9b_1-1$ & $12b_1 -3b_4 - 4$ & \cellcolor{lightgray}$9b_1-3$ \\
\hline
\end{tabular}
\end{center}

Using the assumption $b_1 > b_4$ and the fact $b \ge 3b_1-1$, we easily verify $\ord_\lambda(I) = 9b_1 - 3$.

\medskip

\noindent\textbf{When $b_1 > b_4$ and $b_2 = 2b_1-1$.}

In this case, $b_2 = 2b_1-1 \ge b_1 + b_4$.

We put $\lambda = (b_2, 2, 3b_2-2b_4-1)$. Then $\ord_\lambda(g) = 6b_2-3$, and we need $\ord_\lambda(I) \ge 3b_2-1$.

\begin{center}
\begin{tabular}{|c||c|c|c|c|c|c|c|}
\hline
$f$ & $x^3$ & $y^b$ & $z^2$ & $x^2y^{b_1}$ & $xy^{b_2}$ & $xz$ & $y^{b_4}z$ \\
\hline
$\ord_\lambda(f)$ & $3b_2$ & $2b$ & $6b_2-4b_4-2$ & $2b_1+2b_2$ & $3b_2$ & $4b_2-2b_4-1$ & \cellcolor{lightgray}$3b_2-1$ \\
\hline
\end{tabular}
\end{center}

Since $b_1 = \lceil\frac{b}{3}\rceil$, we have $2b \ge 6b_1 -4 = 3b_2 -1$.
Moreover, $b_2 \ge b_1+b_4 > 2b_4$ yields $6b_2 - 4b_4 - 2 \ge 4b_2$ and $4b_2-2b_4-1 \ge 3b_2$.
Therefore, $\ord_\lambda(I) = 3b_2-1$.

\bigskip

Consequently, for every candidate $g\in\mathcal{X}$ we have constructed a weight $\lambda$ with $\ord_\lambda(g)<\ord_\lambda(I^2)$. Hence $I$ is normal by Proposition~\ref{prop:ord_test} and Theorem~\ref{thm:RRV}.
This completes the proof of Theorem \ref{maintheorem}.

Table~\ref{tab:lambda_7gen} summarizes the valuations $\lambda$ used for each candidate monomial in the 7-generator case.

\begin{table}[h]
\centering
\scriptsize
\caption{Choice of $\lambda$ for each candidate monomial $g$}
\label{tab:lambda_7gen}
\renewcommand{\arraystretch}{1.3}
\begin{tabular}{|l|>{\raggedright\arraybackslash}p{2.5cm}|
                >{\raggedright\arraybackslash}p{4cm}|l|}
\hline
$g$ & Cases & Condition & $\lambda$ \\
\hline
$x^2y^{b_1}z$
& 1, 2-a, 4-a
& $b_1 = 1$
& $(1,1,2)$ \\
\cline{3-4}
& 
& $b_1 \ge 2$
& $(3b_1-2,3,9b_1-9)$ \\
\hline
$xy^{b_2}z$
& 1, 2-a, 2-b, 3-a, 
& $b_2 = 2$
& $(1,1,2)$ \\
\cline{3-4}
& 4-a
& $b_2 \ge 2b_1+2$
& $(2b_2-2b_1-1,2,4b_2-2b_1-4)$ \\
\cline{3-4}
&
& $2b_1-1 \le b_2 \le 2b_1+1$, $2b \ge 3b_2-1$
& $(b_2,2,3b_2-3)$ \\
\cline{3-4}
&
& $2b_1-1 \le b_2 \le 2b_1+1$, $2b \le 3b_2-2$
& $(b,3,3b-3)$ \\
\hline
$xyz^2$
& 2-a, 2-b, 4-a
& $a_3 = 1$
& $(3,2,3)$\\
\cline{3-4}
& 
& $a_3 \ge 2$
& $(2,3,3)$ \\
\hline
$x^{a_3+1}yz$
& 2-b, 3-a
& $2a_3=a+1$
& $(2,2a-4,a)$ \\
\cline{3-4}
&
& $2a_3 \le a$
& $(1, 2a_3-2, a_3)$ \\
\hline
$xy^{b_2-1}z^2$
& 3-a, 3-b, 3-d, 4-b
& $b \ge b_2 + b_4$
& $(b_2, 1, b_2)$ \\
\cline{3-4}
&
& $b < b_2 + b_4$ $\therefore$ $b_2=b_4=\frac{b+1}{2}$
& $(b, 2, b)$ \\
\hline
$xy^{b_4-1}z^2$
& 3-c, 3-e, 4-c, 4-d, 4-e, 4-f, 4-g
& --
& $(2b_4-1, 2,2b_4-1)$\\
\hline
$x^2y^{b_4}z$
& 3-e
& --
& $(b_4, 1, b_4+1)$ \\
\hline
$x^3y^{b_1-1}z$
& 4-b, 4-c, 4-d, 4-e
& --
& $(3b_1-2, 3, 6b_1-4)$\\
\hline
$x^3y^{b_4-1}z$ 
& 4-f, 4-g
& --
& $(3b_4-2, 3, 6b_4-4)$\\
\hline
$xy^{b-1}z$
& 3-b, 3-c, 4-c
& $b_2 > b_4$ 
& $(b-b_2+1, 1, b_2-1)$ \\
\cline{3-4}
&
& $b_2 = b_4$
& $(b, 2, b+1)$ \\
\hline
$x^2y^{b_1+b_4-1}z$
& 4-e, 4-g
& $b_1 \le b_4$
& $(b_4, 1, b_1 +b_4)$\\
\cline{3-4}
&
& $b_1 > b_4$
& $(3b_1-2, 3, 9b_1-3b_4-6)$\\
\hline
$x^2y^{b_2-1}z$
& 3-b, 3-c, 3-d, 4-b,
& $b_2 \le b_4$
& $(b_2-1, 1, b_2)$\\
\cline{3-4}
& 4-c, 4-d, 4-f
& $b_2 > b_4$, $b_2 \ge 2b_1$
& $(3b_2-3b_1-1, 3, 3b_2-2)$\\
\cline{3-4}
& 
& $b_4 < b_2 < 2b_1$, $b \ge b_2+ b_4 +1$
& $(b_2, 2, 2b_2+1)$\\
\cline{3-4}
& 
& $b_4 < b_2 < 2b_1$, $b \le b_2 + b_4$
& $(b,3,3b_2)$\\
\hline
$xy^{b_2+b_4-1}z$
& 3-d, 3-e, 4-b, 4-d
& $b_2 < b_4$
& $(4b_4 - 2b_2-2, 2, 2b_4-1)$\\
\cline{3-4}
& 4-e, 4-f, 4-g
& $b_2 \ge b_4 \ge b_2-1$
& $(b_2+b_4-1,2, b_2 + b_4)$\\
\cline{3-4}
&
& $b_2 -2 \ge b_4$, $a\ge 4$
& $(2b_2-3, 2, 4b_2-2b_4-4)$ \\
\cline{3-4}
&
& $b_2 -2\ge b_4$, $a=3$, $b_2 \le b_1 + b_4$
& $(b_4, 1, b_2)$ \\
\cline{3-4}
&
& $b_2 -2\ge b_4$, $a=3$, $b_2 \ge b_1 + b_4 + 1$
& $(3b_2-3b_1-2, 3, 6b_2-3b_1-3b_4-4)$ \\
\cline{3-4}
&
& $b_2-2 \ge b_4$, $a=3$, $b_1 > b_4$, $b_2 \ge 2b_1 + 1$
& $(3b_2-3b_1-2, 3, 6b_2-3b_1-3b_4-4)$ \\
\cline{3-4}
&
& $b_2-2 \ge b_4$, $a=3$, $b_1 > b_4$, $b_2 = 2b_1$
& $(3b_1-1, 3, 9b_1-3b_4-3)$\\
\cline{3-4}
&
& $b_2-2 \ge b_4$, $a=3$, $b_1 > b_4$, $b_2 \le 2b_1 -1$
& $(b_2, 2, 3b_2-2b_4-1)$\\
\hline
\end{tabular}
\end{table}

\section{Further consequences and structures}\label{sec:consequences}
\subsection{Extension to higher dimensions}\label{sec:higher_dim}

We show that our main result extends to higher dimensions for integrally closed monomial ideals with few generators.

\begin{theorem}\label{thm:higher_dim}
	Let $A = k[x_1,\ldots,x_d]$ be the polynomial ring with $d \ge 4$.
	Let $I$ be an integrally closed monomial ideal with $\height_AI = d$.
	If $\mu_A(I) \leq d+4$, then $I$ is normal.
\end{theorem}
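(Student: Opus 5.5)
We argue by induction on $d$, with Theorem~\ref{maintheorem} as the base case $d=3$. The induction step has two parts: a counting argument based on Lemma~\ref{lem:key} showing that some variable lies in $I$, and a reduction modulo that variable.

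\textbf{Counting step.} Suppose first that no variable lies in $I$. Since $\height_A I=d$, for each $i$ some pure power $x_i^{e_i}$ with $e_i\ge 2$ is a minimal generator, which gives $d$ generators. By Lemma~\ref{lem:key}, for every pair $1\le i<j\le d$ there is a minimal generator of the form $x_i^\ell x_j^m$ with $\ell,m\ge1$. These generators are pairwise distinct, because their supports are distinct two-element sets. Hence
\[
\mu_A(I)\ \ge\ d+\binom{d}{2}.
\]
For $d\ge4$ we have $\binom d2\ge 6>4$, so $\mu_A(I)\ge d+6>d+4$, a contradiction. Therefore some variable, say $x_d$, belongs to $I$.

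\textbf{Reduction step.} Since $x_d\in I$, we have $I=x_dA+J A$, where $J=I\cap k[x_1,\dots,x_{d-1}]$. The ideal $J$ is a monomial ideal of height $d-1$ in $d-1$ variables. It is integrally closed: $J$ is the image of $I$ under the projection $x_d\mapsto 0$, so persistence gives this, exactly as in the proof of Lemma~\ref{lem:xy}. Moreover $\mu(J)=\mu_A(I)-1\le (d-1)+4$. By the induction hypothesis, or by Theorem~\ref{maintheorem} when $d-1=3$, $J$ is normal.

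It remains to pass normality from $J$ to $I=(x_d)+JA$. For each $n$ we have
\[
I^n=\sum_{j=0}^n x_d^{\,j}J^{n-j}A .
\]
We test integrality on monomials. Take a monomial $u=x_d^{\,j}v$ with $v\in k[x_1,\dots,x_{d-1}]$, and suppose $u\in\overline{I^n}$. This is the step I expect to need the most care. The integral closure of a monomial ideal is determined by its Newton polyhedron, and the Newton polyhedron of $I^n$ is $n\cdot\mathrm{NP}(I)$. Since $\mathrm{NP}(I)$ is the convex hull of $\mathrm{NP}(J)\times\mathbb{R}_{\ge0}$ together with $e_d+\mathbb{R}_{\ge0}^d$, the condition $u\in\overline{I^n}$ says that $(v,j)$ lies in this polyhedron scaled by $n$.

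If $j\ge n$ then $u\in I^n$ directly. If $j<n$, I claim that $v\in\overline{J^{\,n-j}}$. To see this, write $(v,j)$ as a convex combination $t(p,s)+(1-t)(e_d+q)$ scaled by $n$, with $p\in\mathrm{NP}(J)$ and $q\ge0$. Its last coordinate is $j\ge n(1-t)$. The first $d-1$ coordinates then dominate $nt\cdot p\in nt\,\mathrm{NP}(J)$, and $nt\ge n-j$. Since $\mathrm{NP}(J)$ is closed under adding the positive orthant, $m\,\mathrm{NP}(J)\subseteq m'\,\mathrm{NP}(J)$ whenever $m\ge m'$, so $v\in\overline{J^{\,n-j}}=J^{\,n-j}$. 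Therefore $u\in x_d^{\,j}J^{n-j}\subseteq I^n$, and so $I^n$ is integrally closed.

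Alternatively, one can avoid the polyhedral argument and pass normality through the quotient $A/(x_d)$, as is done in the proofs of Corollary~\ref{cor:5gens} and Theorem~\ref{thm:6gens}.
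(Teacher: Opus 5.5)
Your proof is correct and follows the paper's argument: the same count $\mu_A(I)\ge d+\binom{d}{2}>d+4$ via Lemma~\ref{lem:key} forces some variable into $I$, and one then reduces modulo that variable and iterates down to Theorem~\ref{maintheorem}. The only difference is that you explicitly justify, via $\mathrm{NP}(I^n)=n\,\mathrm{NP}(I)$, why normality of $J$ lifts to $I=(x_d)+JA$, a step the paper treats as immediate.
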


\begin{proof}
Assume that $\mu_A(I) \le d+4$ and $x_i \notin I$ for all $1 \le i \le d$.
By Lemma~\ref{lem:key}, for each pair $i \ne j$ there exists a minimal generator of $I$ of the form $x_i^\ell x_j^m$ with $\ell,m \ge 1$.
Moreover, any minimal generating set of $I$ must contain the $d$ pure powers $x_1^{a_1},\ldots,x_d^{a_d}$.
Thus $\mu_A(I) \ge \binom{d}{2} + d = \frac{d(d+1)}{2}$.
Since $d \geq 4$, we have $\frac{d(d+1)}{2}  > d+4$, 
which contradicts our assumption.
Therefore at least one of $x_1, \ldots, x_d$ is in $I$.
Without loss of generality, we may assume $x_d \in I$.

Then $I/(x_d)$ is an integrally closed monomial ideal in $A/(x_d) \cong k[x_1,\ldots,x_{d-1}]$ 
with $\mu_{A/(x_d)}(I/(x_d)) \leq (d-1)+4$.
Iterating this reduction, we eventually reach the three-variable case with at most $7$ generators, and Theorem~\ref{maintheorem} applies.
\end{proof}

\subsection{Monomial parameter ideals and their integral closures}\label{sec:monomialparameter}

In this subsection, we examine integrally closed monomial ideals arising as integral closures of monomial parameter ideals, and we revisit a standard counterexample to normality.
The following proposition is well-known; we include an elementary proof for completeness.

\begin{proposition}\label{prop:formula}
Let $A = k[x_1,\ldots,x_d]$ be a polynomial ring.
For $a_1, \ldots,a_d \in \mathbb{N}$, set
$$
Q = (x_1^{a_1}, x_2^{a_2}, \ldots, x_d^{a_d}) \subseteq A.
$$
Then the integral closure of $Q$ is given by
$$
\overline{Q}  =
\left(
x_1^{b_1} x_2^{b_2} \cdots x_d^{b_d}~\middle|~\sum_{i=1}^d \frac{b_i}{a_i} \ge 1
\right).
$$
\end{proposition}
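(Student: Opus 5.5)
We will prove the formula for $\overline{Q}$ by establishing the two inclusions separately. Write $J$ for the ideal on the right-hand side. It is generated by the monomials $x^{\mathbf b}=x_1^{b_1}\cdots x_d^{b_d}$ with $\sum b_i/a_i\ge 1$. We will use two standard facts. First, $\overline{Q}$ is a monomial ideal, since $Q$ is. Second, a monomial $x^{\mathbf b}$ lies in $\overline{Q}$ if and only if some power $(x^{\mathbf b})^n$ lies in $Q^n$, which is the same as saying $n\mathbf b$ is componentwise at least some sum of $n$ exponent vectors of generators. If one prefers not to quote the second fact, each inclusion can be handled directly, as follows.

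\emph{Inclusion $J\subseteq\overline{Q}$.} Take $x^{\mathbf b}$ with $\sum b_i/a_i\ge 1$, and put $L=a_1a_2\cdots a_d$. Then
\[
\sum_i \frac{b_iL}{a_i}\ge L,
\]
where each $n_i:=b_iL/a_i$ is a nonnegative integer. Choose integers $0\le m_i\le n_i$ with $\sum m_i=L$. Then $(x^{\mathbf b})^L=\prod_i x_i^{b_iL}$ is divisible by $\prod_i (x_i^{a_i})^{m_i}$, because $a_im_i\le a_in_i=b_iL$. The latter product lies in $Q^{L}$, so $(x^{\mathbf b})^L\in Q^L$. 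This gives the integral equation $T^L-c=0$ with $c\in Q^L$, and hence $x^{\mathbf b}\in\overline{Q}$.

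\emph{Inclusion $\overline{Q}\subseteq J$.} Since $\overline{Q}$ is monomial, it suffices to show that a monomial $g=x^{\mathbf b}$ with $\sum b_i/a_i<1$ is not in $\overline{Q}$. We argue exactly as in Proposition~\ref{prop:ord_test}, with $d$ variables. Let $\varphi:A\to k[[t]]$ be defined by $x_i\mapsto t^{L/a_i}$. Then $\ord\varphi(x_i^{a_i})=L$ for every $i$, so $\varphi(Q)k[[t]]=(t^L)$. On the other hand,
\[
\ord\varphi(g)=\sum_i b_iL/a_i<L.
\]
By Lemma~\ref{dvr} and persistence of integral closure, $\varphi(\overline{Q})\subseteq\overline{(t^L)}=(t^L)$, so $g\notin\overline{Q}$.

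The only slightly delicate point is the reduction to monomials in the second inclusion: we need that $\overline{Q}$ is spanned by monomials. This follows from the torus action, or from the Newton polyhedron description recalled in Section~\ref{sec:preliminaries}. Alternatively, the valuation argument applies to any polynomial $h\in\overline{Q}$. Since $\varphi$ sends distinct monomials to distinct powers only up to collisions, the cleaner route is the monomial one, and we will simply cite the standard fact that integral closures of monomial ideals are monomial.
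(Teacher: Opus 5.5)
Your proof is correct. The inclusion $J\subseteq\overline{Q}$ follows the paper's argument (with exponent $\prod a_i$), and you also supply the step the paper leaves implicit: choosing $0\le m_i\le n_i$ with $\sum m_i=L$ to see that $(x^{\mathbf b})^L\in Q^L$.

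The reverse inclusion is where you genuinely differ. The paper argues combinatorially from an equation of integral dependence $(x^\beta)^n+f_1(x^\beta)^{n-1}+\cdots+f_n=0$. Since each $Q^i$ is spanned by monomials and monomials form a $k$-basis of $A$, the monomial $(x^\beta)^n$ must coincide with some term $u(x^\beta)^{n-i}$ with $u\in Q^i$. Hence $(x^\beta)^i\in Q^i$, and $\sum b_j/a_j\ge 1$ is read off from the exponents. You instead apply the single weight $x_i\mapsto t^{L/a_i}$, i.e.\ the test of Proposition~\ref{prop:ord_test} applied to $Q$ itself.

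The paper's route is elementary (no DVR or persistence needed). It proves the general fact that a monomial is integral over a monomial ideal exactly when some power of it lies in the corresponding power of the ideal, which underlies the Newton polyhedron description for arbitrary monomial ideals. Your route is shorter and explains why parameter ideals are special. The Newton polyhedron of $Q$ has a single compact facet, lying on $\sum b_i/a_i=1$, so one valuation detects everything. A general monomial ideal would need one weight per facet.

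In this direction both proofs treat only monomials, so both rely on $\overline{Q}$ being a monomial ideal. Your worry about collisions can in fact be removed. Instead of specializing to $k[[t]]$, use the monomial valuation $v(h)=\min\{\sum_i b_iL/a_i \mid x^{\mathbf b}\ \text{occurs in}\ h\}$ on $A$ itself; lowest-weight forms multiply because $A$ is a domain. Every element of $Q^i$ has $v\ge iL$. So if $v(h)<L$, the term $h^n$ in an integral equation would have strictly smaller value than all the other terms, which is impossible. Hence $v(h)\ge L$ for all $h\in\overline{Q}$, so every monomial of $h$ lies in $J$, and the citation of monomiality becomes unnecessary.
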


\begin{proof}
Let $x^\beta=x_1^{b_1}\cdots x_d^{b_d}$ be a monomial such that
$\sum_{i=1}^d \frac{b_i}{a_i}\ge 1$.
Set $m=\prod_{i=1}^d a_i$.
Then
$$
(x^\beta)^m = x_1^{mb_1}\cdots x_d^{mb_d} \in Q^m,
$$
and hence $x^\beta\in\overline{Q}$ by definition of integral closure.

\smallskip

Conversely, let $x^\beta=x_1^{b_1}\cdots x_d^{b_d}$ be a monomial in $\overline{Q}$.
By definition, there exist an integer $n\ge1$ and elements $f_i\in Q^i$ such that
$$
(x^\beta)^n + f_1(x^\beta)^{n-1} + \cdots + f_n = 0.
$$
Rewriting this equality, we obtain
$$
(\ast) \qquad (x^\beta)^n = -\sum_{i=1}^n f_i (x^\beta)^{n-i}.
$$

Each $f_i$ is a $k$-linear combination of monomials in $Q^i$, hence the right-hand side of $(\ast)$ is a $k$-linear combination of monomials.
Since monomials form a $k$-basis of $A$, the equality $(\ast)$ forces the monomial $(x^\beta)^n$ to occur on the right-hand side with a nonzero coefficient.
Therefore, there exists $1 \le i \le n$ and a monomial term $u \in Q^i$ such that
\[
u\,(x^\beta)^{\,n-i} = (x^\beta)^n .
\]
Equivalently, $(x^\beta)^i = u \in Q^i$.

Thus we can write
$$
i\beta = \sum_{j=1}^d m_j a_j e_j + \xi,
$$
where $m_j\in\mathbb{N}_0$ satisfy $\sum_{j=1}^d m_j = i$,
$\xi\in\mathbb{N}_0^d$, and $e_j$ denotes the $j$-th standard basis vector, where $\mathbb{N}_0$ denotes the set of non-negative integers.
Consequently, for each $j$ we have $i b_j \ge m_j a_j$, and hence
$$
\sum_{j=1}^d \frac{b_j}{a_j} \ge \sum_{j=1}^d \frac{m_j}{i} = 1.
$$

This completes the proof.
\end{proof}

Note that if $I=\overline{(x_1^{a_1}, x_2^{a_2}, \ldots, x_d^{a_d})}$, then $(x_1^{a_1}, x_2^{a_2}, \ldots, x_d^{a_d})$ is a minimal reduction of $I$.
Conversely, if an integrally closed monomial ideal $I$ contains a monomial parameter ideal $Q$ as a reduction, then $I=\overline{Q}$.

\begin{remark}
Proposition~\ref{prop:formula} can be interpreted geometrically
in terms of the Newton polyhedron of the monomial ideal
$(x_1^{a_1},\ldots,x_d^{a_d})$.
Indeed, the condition
$\sum_{i=1}^d \frac{b_i}{a_i} \ge 1$
describes precisely the supporting hyperplane of the Newton polyhedron.
\end{remark}

\begin{observation}
Let $A=k[x,y,z]$ and let $Q=(x^a,y^b,z^c)$ with $a\ge b\ge c\ge 2$.
Then, a direct check via Proposition~\ref{prop:formula} shows that,
for $I=\overline{Q}$, we have $\mu_A(I)=6$ if and only if $b=c=2$.
On the other hand, if $b\ge 3$, then monomials involving all three variables necessarily appear
among the minimal generators of $I$, and hence $\mu_A(I)\ge 8$.
In particular, $\mu_A(I)=7$ never occurs for ideals of the form
$I=\overline{(x^a,y^b,z^c)}$.

Now assume $a\ge 2$ and put $I=\overline{(x^a,y^2,z^2)}$.
Setting $d=\ceil{\frac{a}{2}}$, we have
\[
I=(x^a,y^2,z^2,x^d y,x^d z,yz).
\]
In this case, it is straightforward to verify that
$\overline{(x^{2a},y^4,z^4)}=I^2$.
Hence $\overline{I^2}=I^2$, and therefore $I$ is normal by
Theorem~\ref{thm:RRV}.

Thus, when restricting to integrally closed ideals obtained as the
integral closure of monomial parameter ideals, every $6$-generated
example is automatically normal.
\end{observation}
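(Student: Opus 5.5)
The statement has two parts: a count of generators of $I=\overline{Q}$ for $Q=(x^a,y^b,z^c)$, and a normality check when $b=c=2$. Both rest on Proposition~\ref{prop:formula}, which identifies the monomials of $I$ as $x^iy^jz^k$ with $\frac ia+\frac jb+\frac kc\ge 1$.

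\emph{Counting generators.} I would first count the minimal generators lying in the coordinate planes. The ideal $I\cap k[x,y]=\overline{(x^a,y^b)}$ is integrally closed. If $x^iy^j$ lies in it, then $1\le \frac ia+\frac jb\le \frac{i+j}{b}$, since $b\le a$. Hence its order is $b$, attained by $y^b$, and Lemma~\ref{lem:order_mingens} gives $b+1$ minimal generators. In the same way, $I\cap k[x,z]$ and $I\cap k[y,z]$ each have $c+1$ minimal generators. Each of these is a minimal generator of $I$. Subtracting the three pure powers, which are each counted twice, gives $b+2c$ distinct generators supported on the coordinate planes. The remaining minimal generators of $I$ are those divisible by $xyz$.

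\emph{The cases for $(b,c)$.} If $b=c=2$, then $yz\in I$. Every monomial divisible by $xyz$ is then non-minimal, so $\mu_A(I)=6$. If $b\ge 3$, then $b+2c\ge 7$, with equality only for $(b,c)=(3,2)$. In every other case we already have $b+2c\ge 8$. For $(b,c)=(3,2)$ I would produce an interior minimal generator explicitly. Put $i=\ceil{a/6}$; then $x^iyz\in I$, because $\frac ia+\frac13+\frac12\ge1$. The plane generators that could divide it are the following.
\begin{itemize}
\item The generator $yz$ is not in $I$, because $\frac13+\frac12<1$.
\item The only $yz$-plane generator involving both $y$ and $z$ is $y^2z$.
\item The $xz$-plane generator $x^{\ceil{a/2}}z$ does not divide $x^iyz$, since $\ceil{a/6}<\ceil{a/2}$ for $a\ge3$.
\item The $xy$-plane generators $x^{\ceil{2a/3}}y$ and $x^{\ceil{a/3}}y^2$ do not divide it either.
\end{itemize}
A minimal generator of $I$ dividing $x^iyz$ would therefore involve all three variables. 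This gives $\mu_A(I)\ge 8$, so $\mu_A(I)=7$ never occurs.

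\emph{Normality when $b=c=2$.} Here Proposition~\ref{prop:formula} yields $I=(x^a,y^2,z^2,x^dy,x^dz,yz)$ with $d=\ceil{a/2}$. We have $Q^2\subseteq I^2\subseteq\overline{Q^2}=\overline{(x^{2a},y^4,z^4)}$, so it suffices to show that every monomial $x^iy^jz^k$ with $\frac{i}{2a}+\frac{j+k}{4}\ge1$ lies in $I^2$. By symmetry in $y$ and $z$, I would split according to $s=j+k$.
\begin{itemize}
\item If $s\ge4$, one of $y^4$, $y^2z^2$, $z^4$, $y^3z=(y^2)(yz)$ divides the monomial.
\item If $s=3$, we need $i\ge\ceil{a/2}=d$, and then $(x^dy)(yz)$ or $(x^dy)(y^2)$ works, together with its symmetric counterparts.
\item If $s=2$, we need $i\ge a$. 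Then $x^ay^2$, $x^ayz$ and $x^az^2$ are products of two generators, for example $(x^a)(yz)$. The alternative $x^{2d}yz=(x^dy)(x^dz)$ also works when $2d\ge a$.
\item If $s=1$, we need $i\ge\ceil{3a/2}=a+d$, and then $(x^a)(x^dy)$ works.
\item If $s=0$, we need $i\ge 2a$.
\end{itemize}
This gives $I^2=\overline{I^2}$, and normality follows from Theorem~\ref{thm:RRV}.

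The main obstacle I expect is the case $(b,c)=(3,2)$ in the generator count. There one must check carefully that no plane generator divides the proposed interior monomial, uniformly in $a\ge3$. I would first try the candidate $x^{\ceil{a/6}}yz$ and then check the small values $a=3,4,5$ by hand.
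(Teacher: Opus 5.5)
Your proposal is correct and follows essentially the paper's route: you carry out the ``direct check via Proposition~\ref{prop:formula}'' for the generator count and the ``straightforward'' verification $\overline{(x^{2a},y^4,z^4)}=I^2$, then conclude with Theorem~\ref{thm:RRV}. The one assertion you verify only for $(b,c)=(3,2)$ is that a minimal generator involving all three variables exists whenever $b\ge 3$. The same argument works in general with $x^{i}yz$, $i=\ceil{a(1-\frac1b-\frac1c)}\ge 1$: here $yz\notin I$, and since $a/c\ge 1$ and $a/b\ge 1$, the plane generators $x^{i'}y$ and $x^{i''}z$ have $i',i''>i$, so neither divides $x^iyz$.
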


As already mentioned in Section~1, there exists a counterexample to the normality of integrally closed monomial ideals with $\mu_A(I)=8$ in $A=k[x,y,z]$.
In fact, it arises in a particularly tractable situation where $I$ admits a monomial parameter ideal as a reduction.
To illustrate why such a phenomenon can occur once we reach $8$ generators, let us revisit the following standard example.

\begin{example}\label{ex:counterexample}
The bound in Theorem~\ref{thm:higher_dim} is sharp.
Indeed, the well-known counterexample
$I=\overline{(x^7,y^3,z^2)}$
mentioned in Section~1 satisfies $\mu_A(I)=8$ and is not normal.
\end{example}

\begin{proof}
	The standard computation by using Proposition \ref{prop:formula} shows that $$I = (x^7, y^3, z^2, x^5y, x^3y^2, x^4z, y^2z, x^2yz).$$
	Therefore $x^6y^2z \notin I^2$.
	However, $(x^6y^2z)^2 = (x^5y) x^7 y^3 z^2 \in I^4 = (I^2)^2$ implies that $x^6y^2z \in \overline{I^2}$.
	Thus, $\overline{I^2} \ne I^2$.
	The eight monomials listed above form the minimal system of monomial generators of $I$. Hence $\mu_A(I) = 8$.
\end{proof}

\subsection{Rees algebras and reduction numbers of normal monomial ideals}\label{sec:reduction_numbers}

Throughout this subsection, we assume that $k$ is an infinite field, and we consider the localization $A_\fkm$ of $A$ at $\fkm$ where $A=k[x,y,z]$ and $\fkm = (x,y,z)$.

Let $I$ be a normal monomial ideal in $A$ with $\height_AI = 3$.
Since the Rees algebra $\calR(I_\fkm) = \bigoplus_{n \ge 0} (I_\fkm)^n$ is a normal affine semigroup ring,
by Hochster's theorem, $\calR(I_\fkm)$ is Cohen-Macaulay.

By a result of Goto and Shimoda \cite{GS}, the Cohen-Macaulayness of $\calR(I_\fkm)$ implies that the associated graded ring $\gr_{I_\fkm}(A_\fkm) = \bigoplus_{n \geq 0} (I_\fkm)^n/(I_\fkm)^{n+1}$ 
is also Cohen-Macaulay and its $a$-invariant is negative.

Since $k$ is infinite, $I_\fkm$ admits a minimal reduction $Q$ so that $Q$ is a parameter ideal of $A_\fkm$.
As $\gr_{I_\fkm}(A_\fkm)$ is Cohen-Macaulay, we have
$$
a(\gr_{I_\fkm}(A_\fkm))=\red_Q(I_\fkm)-3,
$$
where $\red_Q(I_\fkm) = \min\{n \geq 0 \mid (I_\fkm)^{n+1} = Q(I_\fkm)^n\}$ is the reduction number of $I_\fkm$ with respect to $Q$.
Hence $\red_Q(I_\fkm) \le 2$, since $a(\gr_{I_\fkm}(A_\fkm))$ is negative.
The same conclusion holds for the ideal $I$ in the polynomial ring $A$.

Therefore, we obtain the following as a corollary of Theorem \ref{maintheorem}.

\begin{corollary}\label{cor:reduction_number}
Let $I$ be an integrally closed monomial ideal in $A$ with $\height_AI = 3$ and $\mu_A(I) \leq 7$. 
Then Then $I$ admits a minimal reduction $Q$ generated by three elements such that $I^3=QI^2$.
\end{corollary}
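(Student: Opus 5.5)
The plan is to combine Theorem~\ref{maintheorem} with the argument sketched just before the statement. First, by Theorem~\ref{maintheorem}, $I$ is normal. Since $I$ is a monomial ideal, its Rees algebra $\calR(I)=A[It]$ is the affine semigroup ring generated by the monomials $x^\alpha y^\beta z^\gamma$ and $x^\alpha y^\beta z^\gamma t$ with $x^\alpha y^\beta z^\gamma\in I$. Normality of $I$ means exactly that this semigroup ring is normal, since $\calR(I)$ is normal in the domain $A[t]$ if and only if every power of $I$ is integrally closed. Hochster's theorem then gives that $\calR(I)$ is Cohen--Macaulay. Cohen--Macaulayness is preserved under localization, so $\calR(I_\fkm)$, a localization of $\calR(I)$, is Cohen--Macaulay as well.

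Next I apply Goto--Shimoda \cite{GS} to the $3$-dimensional Cohen--Macaulay local ring $A_\fkm$ and the $\fkm A_\fkm$-primary ideal $I_\fkm$. This gives that $G=\gr_{I_\fkm}(A_\fkm)$ is Cohen--Macaulay with $a(G)<0$. Because $k$ is infinite, there is a minimal reduction $Q'\subseteq I_\fkm$ generated by three general elements; these may be chosen as $k$-linear combinations of the monomial generators of $I$, so they already lie in $I\subseteq A$. For Cohen--Macaulay $G$, the standard formula gives $\red_{Q'}(I_\fkm)=a(G)+3\le 2$, that is, $(I_\fkm)^3=Q'(I_\fkm)^2$.

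The remaining step, which I expect to be the only delicate point, is to descend from $A_\fkm$ to $A$. Let $Q\subseteq A$ be the ideal generated by the three chosen linear combinations of monomial generators. Since $I$ is $\fkm$-primary, both $I^3$ and $QI^2$ are $\fkm$-primary. The quotient $I^3/QI^2$ is supported only at $\fkm$, so it vanishes if and only if its localization at $\fkm$ vanishes; the latter holds by the previous paragraph, hence $I^3=QI^2$ in $A$. For the same reason $Q$ is a reduction of $I$ in $A$ generated by three elements. It is minimal, because $\height_A I=3$ forces any reduction to need at least three generators; equivalently, the analytic spread of $I$ is $3$.

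Care is needed in choosing the general elements. They must be chosen as linear combinations of the generators of $I$ in $A$ itself, rather than as arbitrary elements of $I_\fkm$, so that $Q$ is defined over $A$. This is possible because $k$ is infinite: the fiber cone of $I$ is a standard graded $k$-algebra generated in degree one, so a general choice of three degree-one forms yields a Noether normalization of the fiber cone, and hence a reduction.
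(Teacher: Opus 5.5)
Your local argument is the paper's: Theorem~\ref{maintheorem}, Hochster, Goto--Shimoda, and $a(G)=\red_{Q}(I_\fkm)-3$ together give $(I_\fkm)^3=Q(I_\fkm)^2$ for a minimal reduction $Q$ of $I_\fkm$. The paper then only asserts that ``the same conclusion holds'' in $A$.

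The gap is in your descent paragraph. The claim that $QI^2$ is $\fkm$-primary because $I$ is, is false. Since $\sqrt{QI^2}=\sqrt{Q}$, you need $\sqrt{Q}=\fkm$ in $A$, and three general $k$-linear combinations of the monomial generators need not have this property.

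For example, $I=(x^3,xy,y^2,z)$ is an integrally closed monomial ideal of height $3$ with $\mu_A(I)=4$. After an invertible change of generators, a general such $Q$ has the form $(z+p_1y^2,\ xy+p_2y^2,\ x^3+p_3y^2)$ with $p_1,p_2,p_3\in k^\times$. It vanishes at the point $P=(-p_3/p_2^2,\ p_3/p_2^3,\ -p_1p_3^2/p_2^6)\neq 0$, where $z\in I$ does not vanish. For the maximal ideal $\mathfrak n$ of $P$ you get $(I^3)_{\mathfrak n}=A_{\mathfrak n}\neq Q_{\mathfrak n}=(QI^2)_{\mathfrak n}$. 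So $I^3/QI^2$ is not supported only at $\fkm$, $Q$ is not a reduction of $I$ in $A$, and $I^3\neq QI^2$, even though $QA_\fkm$ is a minimal reduction of $I_\fkm$.

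Your fiber-cone remark does not repair this. A Noether normalization of $\calR(I)/\fkm\calR(I)$ only gives $I^{n+1}\subseteq QI^n+\fkm I^{n+1}$. Discarding $\fkm I^{n+1}$ is Nakayama's lemma, which needs the local ring $A_\fkm$, or a grading in which $I$ and $Q$ are homogeneous.

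What is missing is the existence of $f_1,f_2,f_3\in I$ such that $Q=(f_1,f_2,f_3)$ satisfies $\sqrt{Q}=\fkm$ in $A$ and $QA_\fkm$ is a reduction of $I_\fkm$. Given such a $Q$, your support argument becomes valid, because both $I^3$ and $QI^2$ are then $\fkm$-primary. Since $\red_{QA_\fkm}(I_\fkm)=a(G)+3$ for every minimal reduction, $I^3=QI^2$ then holds in $A$.

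In the example, $Q=(z,\,xy,\,x^3+y^2)$ works, but in general producing such a $Q$ needs a real argument. Two routes:
\begin{itemize}
\item Replace a general $Q$ by $J=QA_\fkm\cap A$. This ideal is $\fkm$-primary, contained in $I$, and satisfies $I^3=JI^2$. You then have to show $J$ is generated by three elements. That is a nontrivial fact about zero-dimensional ideals of polynomial rings, obtainable e.g.\ from Mohan Kumar's theorem that $\mu(J)=\mu(J/J^2)$ when $\mu(J/J^2)\ge\dim A/J+2$.
\item State the corollary for $I_\fkm$ in $A_\fkm$, which is exactly what your argument, and the paper's, actually proves.
\end{itemize}
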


However, even in this setting, explicitly describing a minimal reduction of $I$ can be surprisingly difficult.
The following examples illustrate how this difficulty arises in specific situations.

\begin{example}
	\begin{enumerate}
		\item Let $I=(x^3, y^2, z^2, xy, xz, yz)$. Then $I$ is integrally closed, and hence normal by Theorem \ref{maintheorem}, since $\mu_A(I)=6$. For this $I$, $Q=(x^3-z^2, y^2-xz, xy)$ satisfies the equality $I^2=QI$.
		\item Let $I=(x^3, y^3, z^3, x^2y, xy^2, x^2z, yz)$. Then $I$ is a normal ideal by Theorem \ref{maintheorem}. For this $I$, $Q=(y^3, xz, x^3-z^3)$ satisfies $I^3=QI^2$ but $I^2\ne QI$, showing that the bound $\red_Q(I)\le 2$ is sharp. 
	\end{enumerate}
\end{example}

\begin{proof}
	(1) Note that $I=Q+(y^2, yz, z^2) = Q + (y,z)^2$. Since
	\begin{eqnarray*}
		y^4 &=& (y^2-xz) y^2 + (xy)(yz) \in QI\\
		y^3z &=& (y^2-xz) yz + (xy)z^2 \in QI\\
		y^2z^2 &=& -(x^3-z^2)y^2 + (xy)(x^2y) \in QI\\
		yz^3 &=& -(x^3-z^2)yz + (xy)(x^2z) \in QI\\
		z^4 &=& -(x^3-z^2)z^2 + x^3z^2 = -(x^3-z^2)z^2 - (y^2-xz)x^2z + (xy)(xyz) \in QI,
	\end{eqnarray*}
	we have $I^2 =QI + (y,z)^4 \subseteq QI$. Hence we have $I^2=QI$.
	
	\medskip
	(2) We have $I=Q + (x^3, x^2y, xy^2, y^2z)$. Thus $I^2 = QI + (y^4z^2)$ by a computation similar to that in (1).
	Notice that for any element $f \in QI$, none of the monomial terms of $f$ divides $y^4z^2$.
	Hence $y^4z^2 \notin QI$, which implies $I^2 \ne QI$.
	Furthermore, since $y^6z^3 = y^3 (y^3z^3) \in QI^2$, we have $I^3=QI^2$.
\end{proof}

\begin{remark}
	Let $I=(x^5, y^5, z^2, x^2y, xy^2, xz, yz)$. Then $I$ is a normal ideal by Theorem \ref{maintheorem}. 
	We were unable to find a minimal reduction $Q$ of $I$ satisfying $I^3=QI^2$ among a class of candidates of the form $(f_1, f_2-f_3, f_4-f_5)$ with $f_i \in \{x^5, y^5, z^2, x^2y, xy^2, xz, yz\}$ $(i=1,\ldots,5)$, by an exhaustive search using Macaulay2 \cite{M2} when $k=\mathbb{R}$ the filed of real numbers.
\end{remark}

\section*{Acknowledgments}

The authors are deeply grateful to the late Professor Shiro Goto for insights that
shaped this work.
Years ago, when the second author was exploring concrete examples of
integrally closed monomial ideals, Professor Goto suggested approaching the problem
from a valuation-theoretic point of view, which became essential to the development
presented in this paper.
This work is dedicated to his memory.

The authors also thank Naoki Endo for helpful comments on an earlier version of
this manuscript.

\end{document}